\documentclass[11pt]{article}

\usepackage[margin=1in]{geometry}
\usepackage{amsmath,amssymb,amsthm,mathtools,mathrsfs}
\usepackage{enumitem}
\usepackage[colorlinks=true,citecolor=blue,linkcolor=blue,urlcolor=blue]{hyperref}
\usepackage{booktabs,tabularx}
\newtheorem{theorem}{Theorem}[section]
\newtheorem{proposition}[theorem]{Proposition}
\newtheorem{lemma}[theorem]{Lemma}
\newtheorem{corollary}[theorem]{Corollary}
\newtheorem{remark}[theorem]{Remark}
\newtheorem{assumption}[theorem]{Assumption}

\numberwithin{equation}{section}

\newcommand{\R}{\mathbb R}
\newcommand{\E}{\mathbb E}
\newcommand{\cH}{\mathcal H}
\newcommand{\cP}{\mathcal P}
\newcommand{\KSD}{\mathsf{KSD}}
\newcommand{\KL}{\mathsf{KL}}
\newcommand{\Law}{\mathcal L}

\newcommand{\norm}[1]{\left\|#1\right\|}
\newcommand{\ip}[2]{\left\langle #1,#2\right\rangle}
\newcommand{\Lip}{\operatorname{Lip}}
\newcommand{\cF}{\mathcal{F}}

\newcommand{\muNn}{\mu^N_n}
\newcommand{\muNt}{\mu^N_t}
\newcommand{\mut}{\mu_t}
\newcommand{\muNs}{\mu^N_s}
\newcommand{\mus}{\mu_s}

\newcommand{\muz}{\mu_0}

\newcommand{\diver}{\operatorname{div}}
\newcommand{\Tr}{\operatorname{Tr}}
\newcommand{\Cov}{\operatorname{Cov}}
\newcommand{\Id}{I}

\newif\ifdraft
\draftfalse 
\drafttrue
\usepackage{todonotes}
\usepackage{color}
\definecolor{blush}{rgb}{0.87, 0.36, 0.51}
\ifdraft
\newcommand{\ak}[1]{\todo[color=blush,size=\footnotesize]{ \textbf{AK:}  #1}}

\title{\bf Uniform-in-time Propagation-of-Chaos for\\  Stein Variational Gradient Descent}

\author{Krishnakumar Balasubramanian\thanks{Department of Statistics, University of California, Davis.
Email: \texttt{kbala@ucdavis.edu}.}
\qquad
Sayan Banerjee\thanks{Department of Statistics and Operations Research,
University of North Carolina at Chapel Hill,
Email: \texttt{sayan@unc.edu}.}
\qquad
Anna Korba\thanks{CREST, ENSAE
Institut Polytechnique de Paris
Email: \texttt{anna.korba@ensae.fr}.}
}
\date{}

\begin{document}
\maketitle

\begin{abstract}
    We study uniform-in-time propagation-of-chaos for continuous-time Stein Variational Gradient Descent (SVGD). Classical finite-time propagation-of-chaos estimates for mean-field systems typically deteriorate rapidly with time and therefore do not directly explain the long-time relation between the finite-particle system and its mean-field limit.
  We obtain two complementary classes of uniform-in-time propagation-of-chaos results.
  
  For broad distributional metrics, we  introduce a cutoff strategy which combines finite-time propagation-of-chaos estimates up to an \(N\)-dependent horizon with independent quantitative long-time 
 convergence 
    estimates for the finite-particle and mean-field SVGD flows. This yields uniform-in-averaging-time propagation-of-chaos bounds in Langevin kernel Stein discrepancy, Wasserstein-1 distance, and Wasserstein-2 distance, with logarithmic or iterated-logarithmic rates depending on the metric, target and kernel class.

We also develop a finite-dimensional theory for matrix-valued finite-rank kernels. 
For Gaussian targets with bilinear kernels, the SVGD dynamics close exactly on first and second moments, yielding genuine uniform-in-physical-time parametric propagation-of-chaos rates in finite-dimensional Stein-feature metrics. We then prove a conjugacy principle showing that these feature-level estimates transfer to conjugate target-kernel pairs under orientation-preserving diffeomorphisms, thereby extending the theory to broad classes of nonlinear, including multimodal, targets.


Together, these results highlight the contrast between generic distributional metrics, for which our general approach yields logarithmic rates, and closed finite-dimensional Stein observables, for which parametric  \(N^{-1/2}\) propagation-of-chaos rates persist uniformly in time.\\

\noindent \textbf{AMS 2020 subject classifications:} Primary 60K35; Secondary 35Q49, 46E22, 65C35, 62F15.\newline

\noindent \textbf{Keywords:} Stein Variational Gradient Descent; propagation-of-chaos; long-time behavior; kernel Stein discrepancy; Wasserstein distance; mean-field limit; interacting particle systems; entropy dissipation; time-averaged convergence; reproducing kernel Hilbert spaces; finite-rank kernels; Gaussian conjugacy.

\end{abstract}

\section{Introduction}
\subsection{Background: Stein Variational Gradient Descent}
Stein Variational Gradient Descent (SVGD) is a deterministic interacting-particle method for approximating a target law $\pi(dx)\propto e^{-V(x)}dx$ by evolving particles through a kernelized Stein velocity \cite{LiuWang2016}.  The velocity combines transport toward high-probability regions of the target with a repulsive interaction that prevents particle collapse, and its mean-field limit can be interpreted as a gradient-flow dynamics for the relative entropy functional to the target in a Stein-induced geometry \cite{duncan2023geometry,Liu2017}.  This structure has made SVGD a central object in the analysis of sampling, variational inference, mean-field limits, and kernel Stein discrepancies \cite{LuLuNolen2019,KorbaSalimArbelLuiseGretton2020,ShiMackey2023,BBG2025}.  SVGD, and in particular its amortized version~\cite{feng2017learning}, has also attracted renewed interest due to its connection to drifting models \cite{Balasubramanian2026DriftingRates}, where related kernel-based attraction--repulsion or score-difference based velocity fields are used to move generated distributions during training for one-step generation \cite{DengLiLiDuHe2026Drifting,FranzHoffmannMartius2026,CaoWeiLiu2026,GrettonEtAl2026,turan2026generative}.

Given a positive-definite kernel \(k\), SVGD at the measure-transport level can be viewed as weak solutions $\{\mu_t\}_{t \ge 0}$ to the mean-field transport equation
\begin{equation}\label{eq:mf-svgd}
    \partial_t\mu_t+\nabla\cdot(v_{\mu_t}\mu_t)=0,
\end{equation}
where the \emph{kernelized Stein velocity} associated with a probability measure \(\rho\) is given by
\begin{equation}\label{eq:Steinveldef}
v_\rho(x)
:=
\int_{\mathbb R^d}
\left[-k(x,y)\nabla V(y)+\nabla_2 k(x,y)\right] \,\rho(dy).
\end{equation}
$v_\rho$ is obtained by projecting the velocity for the Wasserstein--2 gradient flow of the relative entropy (or Kullback-Leibler divergence) of $\rho$ with respect to $\pi$
\[
\KL(\rho|\pi)=\int \log\Big(\frac{d\rho}{d\pi}\Big)\,d\rho
\]
onto the vector-valued Reproducing Kernel Hilbert Space (RKHS) generated by \(k\). The kernel $k$ imparts `smoothness' to the velocity $v_\rho$, which is not available at the true $\KL$ gradient flow level.
This smoothness manifests itself in the key integration-by-parts identity
\(
\int k(x,y)\nabla\log\rho(y)\,\rho(dy)
=
-\int \nabla_2 k(x,y)\,\rho(dy)
\)
which lets one bypass the score function $\nabla\log\rho(y)$, leading to a particle discretization of the measure-valued flow. However, at the same time, this kernelization preserves the entropy dissipation property of the $\KL$ gradient flow. Formally, along sufficiently regular solutions,
\[
\frac{d}{dt}\KL(\mu_t|\pi)
=
-\|v_{\mu_t}\|_{\mathcal H^d}^2
=
-\KSD^2(\mu_t \|\pi),
\]
where KSD denotes the kernel Stein discrepancy~\cite{liu2016kernelized,chwialkowski2016kernel}
\[
\KSD(\rho\|\pi):= \left\|\int \{-k(\cdot,x)\nabla V(x)+\nabla_2 k(\cdot,x)\}\,\rho(dx)\right\|_{\cH^d}.
\]
The latter can be identified to a kernelized Fisher divergence \cite{KorbaSalimArbelLuiseGretton2020} or Maximum Mean Discrepancy with a so-called Stein kernel. 
For particles
$x_1^N(n),\ldots,x_N^N(n)\in \R^d$ and step size $\eta>0$, the discrete-time
SVGD update for particles $1\le i\le N $ is
\begin{equation}\label{eq:svgd-discrete}
x_i^N(n+1)
=
x_i^N(n)
-\frac{\eta}{N}\sum_{j=1}^N
\left[
k\bigl(x_i^N(n),x_j^N(n)\bigr)\nabla V\bigl(x_j^N(n)\bigr)
-\nabla_2 k\bigl(x_i^N(n),x_j^N(n)\bigr)
\right],
\end{equation}
where $k$ is a positive-definite kernel and $\nabla_2$ denotes the gradient with respect to the second argument of
$k$. The corresponding continuous-time finite-particle system, for $1\le i\le N $, is
\begin{equation}\label{eq:svgd-continuous}
\dot x_i^N(t)
=
-\frac1N\sum_{j=1}^N
k\bigl(x_i^N(t),x_j^N(t)\bigr)\nabla V\bigl(x_j^N(t)\bigr)
+\frac1N\sum_{j=1}^N
\nabla_2 k\bigl(x_i^N(t),x_j^N(t)\bigr).
\end{equation}
We write
\[
\muNn:=\frac1N\sum_{i=1}^N\delta_{x_i^N(n)},
\qquad
\muNt:=\frac1N\sum_{i=1}^N\delta_{x_i^N(t)}
\]
for the corresponding empirical laws.

Since the Stein velocity~\eqref{eq:Steinveldef} has a score-free representation, in the sense discussed above, both $\mu^N(\cdot)$ and its mean-field limit (which has been shown to exist under assumptions, see \cite{LuLuNolen2019}), can be viewed as weak solutions to~\eqref{eq:mf-svgd}.
More precisely, for every test function \(\varphi\in C_c^1(\mathbb R^d)\),
\[
\frac{d}{dt}\int_{\mathbb R^d}\varphi(x),\mu_t(dx)
=
\int_{\mathbb R^d}\nabla\varphi(x)\cdot v_{\mu_t}(x),\mu_t(dx).
\]

At the mean-field level, several complementary convergence in time mechanisms have been developed.  The geometric formulation of Duncan, N\"usken, and Szpruch~\cite{duncan2023geometry} identifies the continuous SVGD equation as the gradient flow of the relative entropy in a kernel-dependent Stein geometry, thereby relating convergence to coercivity properties of the induced metric and to the choice of kernel.  Korba et al.~\cite{KorbaSalimArbelLuiseGretton2020} provide a population analysis in discrete time for smooth target potentials and bounded kernels, deriving an \(O(T^{-1})\) rate for the time-averaged squared KSD from entropy dissipation. The smoothness assumption needed for this convergence analysis was further weakened in \cite{sun2023convergence}. Salim, Sun, and Richt\'arik~\cite{salim2022convergence} extend the discrete-time, population analysis of the scheme for possibly non-log-concave targets satisfying Talagrand's \(T_1\) inequality, together with dimension-dependent complexity bounds in KSD. 
He et al.~\cite{he2025regularized} introduce a regularized Stein variational flow interpolating between SVGD and the Wasserstein gradient flow of the relative entropy, and establish well-posedness, stability, and convergence to equilibrium.  More recently, Carrillo, Skrzeczkowski, and Warnett~\cite{carrillo2024stein} prove a Stein log-Sobolev inequality, and hence exponential convergence of suitable weak mean-field solutions, for kernels whose Fourier transforms have quadratic decay and are reweighted with exponentials of the target potentials; Carrillo and Skrzeczkowski~\cite{carrillo2025convergence} complement this result at the particle level by showing that the classical mean-field estimate justifies the joint limit only on the scale \(t\asymp\log\log N\), while a refined near-equilibrium stability argument extends control to algebraic time scales of order \(\sqrt N\).  In two recent directions, Carrillo, Skrzeczkowski, and Warnett~\cite{carrillo2026stein} identify the singular limit of highly concentrated-kernel SVGD as a local Wasserstein-type gradient flow with quadratic mobility, whereas Chizat et al.~\cite{chizat2026quantitative} establish quantitative local last-iterate convergence in strong \(L^2\)-type norms for Riesz kernels on the torus, obtaining explicit polynomial rates of convergence for the mean-field SVGD flow and, in the Coulomb case, global exponential convergence.

The first finite-particle convergence guarantee for SVGD was obtained by Shi and Mackey~\cite{ShiMackey2023}, who showed that, for sub-Gaussian targets with Lipschitz score and suitably regular kernels, an appropriately tuned discrete-time \(N\)-particle scheme achieves a KSD rate (to the target) of order \((\log\log N)^{-1/2}\).  Banerjee, Balasubramanian, and Ghosal~\cite{BBG2025} improved these rates by introducing a joint-particle entropy-dissipation argument, yielding time-averaged KSD bounds with a near-parametric \(N^{-1/2}\) finite-particle rate in both continuous and discrete time, as well as Wasserstein--2 rates for bilinear--plus--Mat\'ern kernels and long-time marginal propagation-of-chaos results.  More recently, He et al.~\cite{he2026finite} derived non-asymptotic finite-particle rates for regularized SVGD, proving convergence of time-averaged empirical measures, under suitable assumptions, in the (true, non-kernelized) Fisher divergence and, under the Transport-Information inequality (\(W_1I\)), in Wasserstein--1 distance. Their results cover continuous- and discrete-time dynamics and quantify the trade-off among the particle number, regularization level, step size, and the averaging time.

\subsection{Background: Propagation-of-Chaos}
A central question for any mean-field system, and in particular SVGD, is whether the finite-particle flow $\muNt$ remains close to its mean-field limit $\mu_t$ \emph{uniformly over infinite time horizons}. Such phenomena, which go by the name \emph{uniform-in-time propagation-of-chaos} justify the efficacy of finite-particle approximations in mimicking the long-time behavior of mean-field flows (and vice versa). Indeed, classical estimates used in establishing finite-time propagation-of-chaos (see \cite{sznitman2006topics}) deteriorate rapidly with $t$ and become ineffective for long runs or time-averaged iterates. Despite its importance, such uniform-in-time finite-particle behavior of SVGD remains poorly understood. 

For clarity, we recall the sense in which we use propagation-of-chaos. Let
\(
\muNt:=\frac1N\sum_{i=1}^N\delta_{x_i^N(t)}
\)
be the empirical law of the \(N\)-particle system at time $t$, and let \(\mu_t\) denote
the corresponding mean-field law. Given a metric or discrepancy \(\mathsf d\)
on probability measures on \(\mathbb R^d\), finite-time propagation-of-chaos
means that, for each finite horizon \(T<\infty\),
\[
\sup_{0\le t\le T}
\mathbb E \, \mathsf d(\muNt,\mut)\longrightarrow 0,
\qquad N\to\infty.
\]
Uniform-in-time propagation-of-chaos is the stronger property
\[
\sup_{t\ge0}
\mathbb E \, \mathsf d(\muNt,\mut)\longrightarrow 0,
\qquad N\to\infty.
\]
The above notions can also be established in probability in place of expectation.
Quantitative versions replace these convergences by explicit bounds, with a
rate depending on both \(N,T\) in the finite-time case and only on \(N\) in
the uniform-in-time case.

There is a related notion of $k$-particle propagation-of-chaos for fixed $k \ge 1$. Let
\(P_{N,t}^{(k)}\) denote the \(k\)-particle marginal of the \(N\)-particle
system at time \(t\). Given a metric or discrepancy \(\mathsf d_k\) on probability measures on
\((\mathbb R^d)^k\), finite-time propagation-of-chaos means that, for each
fixed \(k\) and each finite horizon \(T<\infty\),
\[
\sup_{0\le t\le T}
\mathsf d_k\bigl(P_{N,t}^{(k)},\mu_t^{\otimes k}\bigr)
\longrightarrow 0,
\qquad N\to\infty ,
\]
with uniform-in-time propagation-of-chaos denoting the $T=\infty$ case. We will mostly work with the first notion, but will relate our results to the $k$-particle notion wherever possible.

Typically, finite-time propagation-of-chaos estimates are obtained by
coupling the finite-particle dynamics with independent copies of the
corresponding McKean--Vlasov process; see, for instance,
\cite{sznitman2006topics,chaintron2022propagation}. More precisely, one
introduces associated nonlinear Markov processes \((\bar x_i(t))_{1\le i\le N}\) which are i.i.d. with
common law \(\mu_t\), and then controls the discrepancy between \(x^N_i(t)\) and
\(\bar x_i(t)\) by a Gr\"onwall argument. The time-dependence of the resulting
rates is governed by the regularity of the driving vector field. For globally
Lipschitz vector fields, this gives errors growing at most exponentially in
time. In SVGD, however, the relevant vector fields are often only locally
Lipschitz, with Lipschitz constants growing in space. The Gr\"onwall bound
then depends on finite-time moment bounds; typically these moments grow
exponentially in time, in which case, the resulting finite-time propagation-of-chaos bounds
can grow double-exponentially~\cite{LuLuNolen2019}. Korba et al.~\cite{KorbaSalimArbelLuiseGretton2020} exhibited an exponential improvement of these bounds
under the assumption that the gradient of the potential is uniformly bounded (which, for example, excludes a Gaussian target). A notable exception is \cite{liu2023towards}, where uniform-in-time control in
the Wasserstein--2 metric was proved for Gaussian targets, bilinear kernels, and Gaussian initialization. In this setting, the SVGD dynamics reduce to a
closed and stable system for the mean and covariance, reflecting a strong
convexity/contractivity structure in the effective finite-dimensional
dynamics. This mechanism is highly rigid, however, and does not extend
directly to general target--kernel pairs.

Uniform-in-time propagation-of-chaos is therefore substantially more delicate.
Classical approaches obtain such estimates by exploiting dissipativity,
convexity, or contractivity in the finite-particle and mean-field dynamics, sometimes through
functional inequalities such as logarithmic Sobolev or transportation
inequalities; see, for example,
\cite{malrieu2003granular,cattiaux2008granular,durmus2020elementary,guillin2021uniform}.
A different and sharper line of work uses the Bogoliubov–Born–Green–Kirkwood–Yvon (BBGKY) hierarchy~\cite{sznitman2006topics} hierarchy together with
entropy tensorization and functional inequalities to obtain optimal
\(k\)-particle rates, including time-uniform estimates under convexity or
small-interaction assumptions
\cite{lacker2023hierarchies,lackerleflem2023sharp}. These methods are very
powerful, but their time-uniform versions still rely on structural convexity,
contractivity, or uniform functional-inequality inputs which are not available
for SVGD in the generality considered here. Our particle system is deterministic, nonlocal, and highly nonlinear through the kernelized Stein velocity, and the mean-field dynamics are not known to satisfy a global contractive estimate in standard probability metrics in the general case (see, however, \cite{carrillo2026stein} for Stein log-Sobolev inequalities for specific kernel and potential families).

Our first approach is different. We do not try to control the discrepancy between
\(\mu_t^N\) and \(\mu_t\) uniformly for all \(t\ge0\). Instead,
we use finite-time propagation-of-chaos only up to a suitably chosen cutoff
time \(T_N\). For averaging horizons \(T\ge T_N\), we exploit the fact that
the time-averaged finite-particle empirical measure and mean-field SVGD flow each converge
quantitatively to the common target \(\pi\), by separate entropy-dissipation
estimates: the finite-particle estimate comes from
\cite{BBG2025}, while the mean-field estimate follows from the
entropy decay estimate for the SVGD mean-field flow
\cite{Liu2017,LuLuNolen2019,KorbaSalimArbelLuiseGretton2020}. The triangle inequality then
converts these two independent long-time convergence estimates to the target into a
particle--mean-field comparison. Balancing the short-time coupling regime with
the long-time target-convergence regime yields our uniform-in-averaging-horizon
propagation-of-chaos bounds in kernel Stein discrepancy (KSD), Wasserstein--1 ($W_1$) and Wasserstein--2 ($W_2$) distances. 
This cutoff philosophy is close in spirit to the recent weak
propagation-of-chaos approach of \cite{glasgow2026uniform} for shallow neural networks gradient flow dynamics, where long-time
mean-field convergence is used to prevent finite-width fluctuation errors from
accumulating indefinitely. See Remark~\ref{GBrem} for more details. 
It is also explicitly noted in \cite{glasgow2026uniform} that, when the target regression function is exactly representable by an infinite-width shallow network, their setting is an instance of particle gradient descent for kernel maximum mean discrepancy  (MMD), and that their results apply to particle and time discretizations of the corresponding Wasserstein gradient flow under suitable kernel assumptions.  

Our second approach (see Section~\ref{sec:conjgauss}) consists in identifying  a conjugacy principle for finite-rank kernels. We show
that a quantitative uniform-in-time propagation-of-chaos estimate for a given target--kernel pair, formulated in a weak metric controlling selected Stein-feature discrepancies, transfers with the same rate to an entire class of conjugate target--kernel pairs. These conjugate pairs are obtained from the given pair by orientation-preserving diffeomorphisms, under which the Stein features and SVGD velocity transform covariantly. We illustrate this principle for the Gaussian conjugacy class, where constant and bilinear kernels yield closed finite-dimensional dynamics for the latent first and second moments.

Our KSD and feature-level estimates, and the results of \cite{glasgow2026uniform}, also connect naturally with the notion of
\emph{weak propagation-of-chaos} introduced in
\cite{delarue2025uniform}. In that framework, one does not necessarily seek a
strong metric comparison between the empirical measure and its mean-field
limit; instead, one controls the error after testing against suitable
observables or functionals of the empirical measure. Our KSD bounds fit this
viewpoint because the discrepancy is defined as an integral probability metric \cite{muller1997integral} over a unit ball in a Stein-kernel RKHS, while the Gaussian-conjugate estimates are a finite-dimensional version
of the same idea, controlling only the Stein features that close under the
dynamics. 

A related idea appeared in the context of a mean-modulated pure-jump flocking model \cite{banerjee2024flocking}. There, suitable Lyapunov functions were used to prove stability of the centered finite-particle system, and to identify its stationary behavior, without relying explicitly on mean-field comparison. This led to a propagation-of-chaos result at time infinity, in the sense that the stationary empirical measure of the particle system converges to the stationary traveling-wave profile of the limiting McKean--Vlasov equation. However, establishing the strictly stronger uniform-in-time propagation-of-chaos for this model remains open.

We emphasize that our use of long-time estimates differs from the standard direction in quantitative mean-field analysis. Once a uniform-in-time propagation-of-chaos estimate is available, it can be combined with long-time convergence of the mean-field dynamics to deduce the corresponding long-time behavior of the finite-particle system; see, for example, \cite{malrieu2003granular,cattiaux2008granular}. Here, for time-averaged SVGD, we reverse this logic: rather than deriving long-time particle behavior from uniform propagation-of-chaos, we use independent long-time convergence estimates for both the particle and mean-field systems to the common target \(\pi\), together with finite-time propagation-of-chaos up to an \(N\)-dependent cutoff, to obtain a uniform-in-averaging-horizon propagation-of-chaos estimate.

\subsection{Summary of Main Results}

This paper establishes two complementary classes of long-time propagation-of-chaos results for SVGD, summarized in Table~\ref{tab:summary-poc-rates}.  At the level of probability measures, as discussed above, we combine finite-time mean-field stability with long-time entropy dissipation through a cutoff argument.  In Section~\ref{sec:KSD}, under suitable regularity, moment, entropy, and initialization assumptions, this technique yields the general two-sample Langevin KSD bound
\[
    \lim_{M \to \infty} \, \limsup_{N \to \infty}\,\sup_{T>0}\mathbb{P}\left[\KSD_\pi\bigl(\overline\mu_T^N,\overline\mu_T\bigr) \ge M (\log \log N)^{-1/2}\right] = 0,
\]
where, 
\[
\KSD_\pi(\mu,\nu)
:= \sup_{\|\varphi\|_{\cH^d}\le 1}
\left\{\int T_\pi\varphi\,d\mu-\int T_\pi\varphi\,d\nu\right\},
\]
and
$\cH^d$ is the vector-valued RKHS for the kernel $k$ and $T_\pi$ is the Langevin--Stein operator
\[
T_\pi \varphi(x) := -\nabla V(x)\cdot \varphi(x)+\nabla\cdot \varphi(x),
\qquad \varphi\in \cH^d.
\]
The probability bounds above can be upgraded to expectation bounds when the (random) second moment for the initial empirical distribution is uniformly sub-Gaussian in $N$.

In Section~\ref{sec:w1}, we extend our approach to the stronger $W_1$ metric. This strengthening comes at the cost of a restriction of the class of allowed kernels. Under a class of normalized-bilinear--plus--Mat\'ern kernels (with parameter $\nu$), see Assumption~\ref{ass:w1-normalized-matern}(i), a quantitative KSD-to-\(W_1\) comparison, combined with finite-time stability and quantitative long-time results, then gives
\[
\lim_{M \to \infty} \, \limsup_{N \to \infty}\,\sup_{T>0}\mathbb{P}\left[W_1\bigl(\overline\mu_T^N,\overline\mu_T\bigr) \ge M (\log \log N)^{-1/\{6(1+7d/6+2\nu)\}}\right] = 0.
\]
Similarly, expectation bounds are available for sub-Gaussian initial second moment assumptions as in the KSD case.

In Section~\ref{sec:wasserstein-poc}, we address propagation-of-chaos in $W_2$. This is a stronger metric than both the metrics above. The relevant kernel that exploits this geometry is the \emph{unnormalized} bilinear--plus--Mat\'ern kernel; see Assumption~\ref{ass:Wass}(i).  Indeed, this kernel enables a quantitative KSD-to-\(W_2\) comparison. However, the unboundedness of the kernel (due to the unbounded unnormalized bilinear part) places the dynamics outside the scope of most existing works. Our key insight is exploiting a \emph{Lyapunov function}, originally introduced in \cite{BBG2025}, to establish global well-posedness of the finite-particle and mean-field SVGD flows, and obtain sharper (linearly growing) temporal bounds on the time-averaged second-moments of the associated measure-valued processes. We also establish a quantitative Dobrushin-type \(W_2\) stability bound adapted to the unbounded bilinear component, which combines with the linear second-moment bounds to give an exponential improvement over the finite-time stability rates of~\cite{LuLuNolen2019}. Together with long-time estimates in \cite{BBG2025} and the corresponding KSD-to-\(W_2\) comparison, these estimates give an \emph{exponential improvement in the propagation-of-chaos rates} (obtained here in the probability sense): with \(\mathfrak r=\mathfrak r(d,\nu)\) defined in~\eqref{eq:rdef} and $\beta_{\rm init}$ in \eqref{eq:init-w2-prob},
\[
    \sup_{T>0}
    \mathbb P\!\left(
        W_2(\overline\mu_T^N,\overline\mu_T)
        >
        (\log N)^{-\mathfrak r/4}
    \right)
    \le
    C(\log N)^{-\min\{\mathfrak r/4,\beta_{\rm init}\}},
\]
as well as
\[
    \lim_{M\to\infty}\limsup_{N\to\infty}
\sup_{T>0}
\mathbb P\left[
W_2(\mu_T^N,\mu_T)>M(\log N)^{-\mathfrak r/2}
\right]
=0.
\]

In Theorems~\ref{kpartW1} and \ref{kpartW2}, using similar arguments, we also obtain the \emph{$k$-particle propagation-of-chaos estimates} for fixed $k \ge 1$, in $W_1$ and $W_2$ metrics. These results quantify the discrepancy between joint $k$-particle laws from time-averages of product-form distributions, uniformly in time, when the initial locations are exchangeable and `near i.i.d.'.

In Section~\ref{sec:conjgauss}, we obtain a second, complementary class of long-time propagation-of-chaos estimates, showing that substantially faster rates are available when one tests the dynamics through suitably chosen \emph{finite-dimensional Stein observables}. We formulate SVGD for finite-rank matrix-valued kernels and prove an exact conjugacy principle under orientation-preserving diffeomorphisms: Stein features, velocities, and feature-level propagation-of-chaos estimates transform covariantly and are therefore preserved under conjugacy. For the standard Gaussian target, the bilinear matrix-valued kernel \((1+x^\top y)I_d\) closes exactly on the first and second moments. Exploiting this finite-dimensional closure and the stability of the resulting moment dynamics, we prove a genuine uniform-in-time feature-level propagation-of-chaos estimate for non-Gaussian initial laws satisfying suitable moment assumptions and a nondegenerate covariance condition: 
\[
    \E\!\left[
        \sup_{t\ge0}
        D_2^2\!\left(
            (\mu_t^N x,\mu_t^N(xx^\top)),
            (\mu_t x,\mu_t(xx^\top))
        \right)
    \right]
    \le C N^{-1}.
\]
Here $\mu_t x := \int x \, \mu_t(dx), \, \mu_t(xx^\top) := \int xx^\top \,\mu_t(dx)$ (similarly for the quantities involving $\mu^N_t$) and $D^2$ is the moment distance
\[
        D_2\bigl((m,S),(\bar m,\bar S)\bigr)
        :=
        \left(\norm{m-\bar m}^2+\norm{S-\bar S}_F^2\right)^{1/2},
\]
where $\|\cdot\|_F$ denotes the Frobenius norm. For Gaussian initialization, this further recovers, and strengthens, the full \(W_2^2\) propagation-of-chaos rates proved in~\cite{liu2023towards}.

Finally, the conjugacy principle transfers these feature-level bounds to Gaussian-conjugate targets with nonlinear, target-adapted matrix-valued kernels. Writing $A$ for the diffeomorphism that takes $X \sim \pi$ to a Gaussian random variable, we obtain results of the form
\[
    \E\!\left[\sup_{t\ge0}d_{A,\ell}^2(\mu_t^N,\mu_t)\right]
    \le C N^{-1},
    \qquad \ell\in\{1,2\},
\]
for metrics controlling the latent first moments ($\ell=1$) and, respectively, the latent first and second moments ($\ell=2$) of $A(X)$. Thus, while broad distributional metrics for general target-kernel pairs lead to logarithmic rates through the finite-time/long-time cutoff mechanism, exact finite-dimensional Stein-feature closure and stability yield parametric \(N^{-1/2}\) propagation-of-chaos rates over suitable conjugacy classes for informative classes of observables.

\begin{table}[!t]
\centering \footnotesize \setlength{\tabcolsep}{4pt} \renewcommand{\arraystretch}{1.18} \begin{tabularx}{\textwidth}{ @{} >{\raggedright\arraybackslash}p{0.18\textwidth} >{\raggedright\arraybackslash}X >{\raggedright\arraybackslash}p{0.20\textwidth} @{} } \toprule \textbf{Metric} & \textbf{Uniform propagation-of-chaos rate} & \textbf{Result} \\ \midrule

Two-sample Langevin KSD
&
\[
\lim_{M \to \infty} \, \limsup_{N \to \infty}\,\sup_{T>0}\mathbb{P}\left[\KSD_\pi\bigl(\overline\mu_T^N,\overline\mu_T\bigr) \ge M (\log \log N)^{-1/2}\right] = 0.
\]
&
Theorem~\ref{prop:uniform-time-avg-poc}
\\
\hline
\addlinespace

\(W_1\), normalized-bilinear--plus--Mat\'ern kernel (parameter $\nu)$
&
\[
\lim_{M \to \infty} \, \limsup_{N \to \infty}\,\sup_{T>0}\mathbb{P}\left[W_1\bigl(\overline\mu_T^N,\overline\mu_T\bigr) \ge M (\log \log N)^{-\gamma/2}\right] = 0,
\]
\ \ with
$$\gamma := (3(1+7d/6+2\nu))^{-1}.$$
&
Theorem~\ref{prop:uniform-time-averaged-w1}
\\

\hline
\addlinespace

\(W_2\), bilinear--plus--Mat\'ern kernel
&
\[
\lim_{M\to\infty}\limsup_{N\to\infty}
\sup_{T>0}
\mathbb P\left[
W_2(\overline\mu_T^N,\overline\mu_T)>M(\log N)^{-\mathfrak r/2}
\right]
=0,
\]
\ \ with
$\mathfrak r$ defined in \eqref{eq:rdef}.

&
Theorem~\ref{prop:uniform-T-W2}
\\
\hline
\addlinespace

$k$-particle POC
&
\[
\lim_{M \to \infty} \, \limsup_{N \to \infty}\,\sup_{T>0}
\mathbb P\left(
W_1^{(k)}
\left(
\widehat\mu_T^{N,k},
\overline{\mu^{\otimes k}}_T
\right)
>
M(\log \log N)^{-\gamma/2}
\right)=0.
\]
\ \ (analogous statement for $W_2$.)

&
Theorem~\ref{kpartW1} and \ref{kpartW2}
\\

\hline

\addlinespace

Finite-dimensional first- and second-moment metric \(D_2\) for non-Gaussian initialization
&
\[
\E\!\left[
\sup_{t\ge0}
D_2^2\!\left(
(\widetilde m^N(t),\widetilde S^N(t)),
(\widetilde m(t),\widetilde S(t))
\right)
\right]
\lesssim N^{-1}
\]
&
Proposition~\ref{prop:affine-gaussian-poc}
\\
\hline

\addlinespace

Full \(W_2\) metric rate for Gaussian initialization 
& 
\[ \E\!\left[ \sup_{t\ge0} W_2^2(\widetilde\mu_t^N,\widetilde\mu_t) \right] \lesssim r_d(N) \] 
& 
Corollary~\ref{cor:recover-liu-wasserstein-poc} ($r_d(N)$ defined there.)
\\
\hline

\addlinespace

Gaussian-conjugate feature metrics \(d_{A,\ell}\), \(\ell\in\{1,2\}\)
&
\[
\E\!\left[
\sup_{t\ge0}
d_{A,\ell}^2(\mu_t^N,\mu_t)
\right]
\lesssim N^{-1},
\qquad \ell\in\{1,2\},
\]
and hence \(d_{A,\ell}=O_{L^2}(N^{-1/2})\)
&
Corollaries~\ref{cor:conjugate-constant-poc}
and~\ref{cor:conjugate-affine-poc}
\\

\bottomrule
\end{tabularx}
\caption{Summary of the propagation-of-chaos estimates. The KSD, \(W_1\)
and \(W_2\) estimates are uniform-in-averaging-time; and the finite-dimensional feature
bounds are uniform-in-physical-time and therefore also imply the corresponding
time-averaged estimates.}
\label{tab:summary-poc-rates}
\end{table}


\textbf{Notation. } Throughout, we will denote by $C \in (0,\infty)$ a generic constant whose value might change between displays. When clear from context, we will abbreviate $\int_{\mathbb{R}^d}$ by simply $\int$. Throughout, we will let $\muNt$ denote the empirical law of the $N$-particle continuous-time SVGD system started from $\mu^N_0$ encoding the initial particle locations, and let $\mut$ be the associated mean-field SVGD flow (if it exists). $\overline \mu^N_t$ and $\overline \mu_t$ will denote the corresponding time-averaged measures.

\section{Results in Kernel Stein Discrepancy\label{sec:KSD}}
In this section, we obtain uniform-in-time propagation-of-chaos estimates under a Langevin Kernel Stein Discrepancy (KSD) under mild assumptions on the kernel, target and initialization.

\subsection{Setup}\label{sec:ksdset}

Let $\pi(dx) \propto e^{-V(x)}dx$ be the target law on $\R^d$, and let $k$ be the positive-definite kernel used in SVGD.  Denote by $\cH^d$ the corresponding vector-valued RKHS and by $T_\pi$ the Langevin--Stein operator
\[
T_\pi \varphi(x) := -\nabla V(x)\cdot \varphi(x)+\nabla\cdot \varphi(x),
\qquad \varphi\in \cH^d.
\]
For a probability measure $\rho$ on $\R^d$, define the Stein witness map
\[
S_\pi(\rho) := \int \{-k(\cdot,x)\nabla V(x)+\nabla_2 k(\cdot,x)\}\,\rho(dx)\in \cH^d.
\]
The one-sample Kernel Stein Discrepancy (KSD) is
\[
\KSD(\rho\|\pi):= \sup_{\|\varphi\|_{\cH^d}\le 1}\int T_\pi\varphi(x)\,\rho(dx)
= \|S_\pi(\rho)\|_{\cH^d}.
\]
Following Shi--Mackey \cite{ShiMackey2023}, define the Langevin KSD with reference $\pi$ by
\[
\KSD_\pi(\mu,\nu)
:= \sup_{\|\varphi\|_{\cH^d}\le 1}
\left\{\int T_\pi\varphi\,d\mu-\int T_\pi\varphi\,d\nu\right\}.
\]
Equivalently,
\[
\KSD_\pi(\mu,\nu)=\|S_\pi(\mu)-S_\pi(\nu)\|_{\cH^d}.
\]
In particular, $\KSD_\pi$ is symmetric and satisfies the triangle inequality; see Lemma 1 in Shi--Mackey \cite{ShiMackey2023}.  Since $S_\pi(\pi)=0$, one also has
\[
\KSD_\pi(\rho,\pi)=\KSD(\rho\|\pi).
\]
Consequently,
\begin{equation}\label{eq:triangle-pi}
\KSD_\pi(\mu,\nu)
\le \KSD(\mu\|\pi)+\KSD(\nu\|\pi).
\end{equation}
\begin{assumption}\label{ass:poc}
We assume the following.
\begin{enumerate}[label=(\roman*),leftmargin=*]

\item \textbf{Kernel.} The kernel \(k\) is symmetric positive definite and belongs to
\(C_b^4(\mathbb R^d\times\mathbb R^d)\) (that is, the kernel and all its derivatives up to order $4$, assumed to exist, are bounded).

\item \textbf{Potential.} Assume \(V\) satisfies
Lu--Lu--Nolen~\cite[Assumption~2.2]{LuLuNolen2019}, with $q=2$. Also assume Shi--Mackey~\cite[Assumption~1]{ShiMackey2023}, namely the associated score function is Lipschitz, has at least one zero, and has mean zero under $\pi$. 

\item \textbf{Initialization.} 

Let \(P_0^N\) denote the joint law of
\((x_1^N(0),\ldots,x_N^N(0))\), having a $C^2$ density $p_0^N$, and let
\(
        \mu_0^N:=\frac1N\sum_{i=1}^N\delta_{x_i^N(0)}.
\)
Assume that, for some \(\beta>0\)\footnote{$\beta$ is $\Theta(1/d)$ for i.i.d. initializations with finite moments of sufficiently high orders \cite{fournier2015rate}.} and $C_{\rm init}<\infty$,
\begin{equation}\label{eq:init-w1-rate}
\E W_2(\mu^N_0,\mu_0)\le C_{\rm init}N^{-\beta},
\end{equation}
where \(\mu_0\in\mathcal P_2(\mathbb R^d)\) is a deterministic
probability measure with density \(p_0\) satisfying
\[
        \int_{\mathbb R^d}\|x\|^2\,\mu_0(dx)<\infty,
        \qquad
        \KL(\mu_0 \| \pi)<\infty .
\]
Moreover, assume that
\[
        \sup_{N\ge1}\frac1N\KL(P_0^N \| \pi^{\otimes N})<\infty .
\]
\item \textbf{Bounded $C^*$.} Assume $C^* := \sup_{z \in \mathbb{R}^d} C^*(z) < \infty$, where
\begin{equation}\label{eq:Cstardef}
C^*(z) := \nabla_2 k(z,z)\cdot \nabla V(z) + k(z,z)\Delta V(z) - \Delta_2 k(z,z).
\end{equation}
\end{enumerate}
\end{assumption}
Under Assumption~\ref{ass:poc}, the particle ODE~\eqref{eq:svgd-continuous} is globally well-posed. Moreover, by the results of~\cite{LuLuNolen2019}, $\mu^N_\cdot$ has a mean-field limit $\mu_\cdot$ and both $\mu^N$ and $\mu$ are weak solutions of~\eqref{eq:mf-svgd}.

For $T>0$, define the time-averaged empirical and mean-field measures by
\[
\overline\mu_T^N:=\frac1T\int_0^T \muNt\,dt,
\qquad
\overline\mu_T:=\frac1T\int_0^T \mut\,dt.
\]
Our goal in this section is to record a uniform-in-$T$ comparison between $\overline\mu_T^N$ and $\overline\mu_T$ in $\KSD_\pi$.

\subsection{Key estimates}

We collect the three estimates needed to establish our main result in this section.  The constants below are allowed to depend on the dimension, the target, the kernel, and the initial law, but not on $N$ or $T$.

\begin{lemma}[Finite-time POC in Langevin KSD]\label{lem:finite-time-poc}
Let Assumption~\ref{ass:poc} hold. Then there exist constants $C_0,c_0<\infty$ such that, for any $R >0$, $N\ge 3$ and
all $t\ge 0$,
\begin{equation}\label{eq:finite-time-poc}
\E\,\left[\sup_{0\le s\le t} \KSD_\pi(\muNs,\mus)\, \mathbf{1}_{\{\int \|x\|^2 \mu^N_0(dx) \le R\}}\right]
\le C_0 N^{-\beta/2}\exp\{c_0(1+R)\exp(c_0t)\}.
\end{equation}
\end{lemma}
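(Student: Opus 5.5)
The plan has three steps: reduce $\KSD_\pi$ to a Wasserstein distance, invoke the finite-time stability estimate of Lu--Lu--Nolen \cite{LuLuNolen2019}, and bound the moment-dependent constants on the event $\{\int\|x\|^2\mu_0^N(dx)\le R\}$.

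\textbf{Step 1: KSD is controlled by $W_1$ with a second-moment weight.} Write $\KSD_\pi(\mu,\nu)=\sup_{\|\varphi\|_{\cH^d}\le1}\int T_\pi\varphi\,d(\mu-\nu)$. For $\|\varphi\|_{\cH^d}\le 1$ and $k\in C_b^4$, the reproducing property gives uniform bounds on $\varphi$, $\nabla\varphi$, $\nabla^2\varphi$ (and $\nabla\cdot\varphi$, $\nabla(\nabla\cdot\varphi)$) by constants depending only on $k$. Since $\nabla V$ is Lipschitz by Assumption~\ref{ass:poc}(ii), $T_\pi\varphi$ satisfies
\[
|T_\pi\varphi(x)-T_\pi\varphi(y)|\le C(1+\|x\|+\|y\|)\|x-y\|.
\]
Taking an optimal $W_2$ coupling and applying Cauchy--Schwarz then yields
\[
\KSD_\pi(\mu,\nu)\le C\bigl(1+m_2(\mu)^{1/2}+m_2(\nu)^{1/2}\bigr)W_2(\mu,\nu),
\]
where $m_2$ denotes the second moment.

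\textbf{Step 2: finite-time $W_2$ stability and moment growth.} Both $\mu^N_s$ and $\mu_s$ are weak solutions of \eqref{eq:mf-svgd}. I would use the Dobrushin-type stability estimate of \cite{LuLuNolen2019}, valid under their Assumption~2.2 with $q=2$:
\[
W_2(\mu^N_s,\mu_s)\le W_2(\mu^N_0,\mu_0)\exp\Bigl(C\int_0^s (1+m_2(\mu^N_r)+m_2(\mu_r))\,dr\Bigr).
\]
The dependence on moments enters because the drift $-k\nabla V+\nabla_2 k$ has a local Lipschitz constant growing linearly in space. The same reference gives a moment bound of the form $m_2(\rho_s)\le (1+m_2(\rho_0))e^{Cs}$, since $\|v_\rho(x)\|\le C(1+m_1(\rho))$ for bounded $k$ and Lipschitz $\nabla V$. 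This bound holds simultaneously for the empirical and mean-field flows, because the argument is deterministic in the initial condition. On the event $\{m_2(\mu^N_0)\le R\}$, therefore,
\[
m_2(\mu^N_s)+m_2(\mu_s)\le C(1+R)e^{cs},
\]
and the stability exponent is at most $C(1+R)e^{cs}$. The prefactor $1+m_2^{1/2}$ from Step 1 is absorbed into the double exponential after enlarging $c_0$. Monotonicity in $s$ lets me pass the supremum over $s\le t$ inside.

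\textbf{Step 3: take expectations.} This gives
\[
\E\Bigl[\sup_{s\le t}\KSD_\pi(\mu^N_s,\mu_s)\,\mathbf 1_{\{m_2(\mu^N_0)\le R\}}\Bigr]\le e^{c_0(1+R)e^{c_0t}}\,\E W_2(\mu^N_0,\mu_0)\le C_{\rm init}N^{-\beta}e^{c_0(1+R)e^{c_0t}},
\]
which is stronger than the claimed $N^{-\beta/2}$.

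If the \cite{LuLuNolen2019} stability is only available in a form like $W_2^2$, or with an extra $\sqrt{\cdot}$, the $N^{-\beta/2}$ in the statement leaves room for that loss via Jensen. The main obstacle I expect is verifying the precise form of the stability bound in \cite{LuLuNolen2019}, namely that its constant depends on the initial second moments only through an $\exp((1+R)e^{ct})$ factor, and checking that the same bound applies to the empirical measure as a weak solution. If it does not, I would redo the Gr\"onwall estimate directly: couple particles along the flow maps $X^{\mu^N}_s$ and $X^{\mu}_s$ starting from an optimal coupling of the initial data, and use
\[
\|v_\mu(x)-v_\nu(y)\|\le C(1+\|x\|+\|y\|)\|x-y\|+C(1+m_2)^{1/2}W_2(\mu,\nu).
\]
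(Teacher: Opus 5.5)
Your proof is correct. It departs from the paper's only in how $\KSD_\pi$ is converted to a transport distance.

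\textbf{The paper's route.} It uses the same Lu--Lu--Nolen stability bound as your Step~2, namely $\sup_{s\le t}W_2(\mu^N_s,\mu_s)\le C\exp\{c(1+R)e^{ct}\}W_2(\mu^N_0,\mu_0)$ on the event. It then cites Shi--Mackey's Lemma~4, $\KSD_\pi(\rho,\nu)\le C_1W_1(\rho,\nu)+C_2\{M_{\nu,\pi}W_1(\rho,\nu)\}^{1/2}$, which needs moment control only for the deterministic mean-field flow $\nu=\mu_s$. The square-root term forces the Jensen step $\E\, W_2(\mu^N_0,\mu_0)^{1/2}\le\{\E\, W_2(\mu^N_0,\mu_0)\}^{1/2}$, and this is the only reason the statement carries $N^{-\beta/2}$.

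\textbf{Your route.} You prove directly that $T_\pi\varphi$ is pseudo-Lipschitz, uniformly over the unit ball of $\cH^d$, and then couple in $W_2$. This step is valid: $k\in C_b^4$ bounds $\varphi$, $\nabla\varphi$ and $\nabla(\nabla\cdot\varphi)$ via the reproducing property, and a Lipschitz score with a zero gives $\|\nabla V(x)\|\le L(1+\|x\|)$. The resulting bound is linear in $W_2$. The price is a weight involving $m_2(\mu^N_s)$ as well as $m_2(\mu_s)$. This is harmless, because on $\{m_2(\mu^N_0)\le R\}$ the deterministic moment-growth bound applies to the empirical flow.

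\textbf{Comparison.} Your argument is self-contained. It is the same Cauchy--Schwarz computation the paper itself performs for $B_\mu-B_\nu$ in Lemma~\ref{lem:finite-time-w2-stability}. It yields the stronger rate $N^{-\beta}$, which would improve the first term of \eqref{eq:main-bound-b} but not the final $(\log\log N)^{-1/2}$ rate. The paper's route is shorter by citation but loses a square root.

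\textbf{A correction to your fallback.} For a bounded kernel, $\|v_\mu(x)-v_\mu(y)\|\le C(1+m_1(\mu))\|x-y\|$ uniformly in $x,y$, because the $x$-gradient of the integrand $-k(x,z)\nabla V(z)+\nabla_2k(x,z)$ is bounded by $C(1+\|z\|)$. Use this constant rather than a factor $(1+\|x\|+\|y\|)$. With the latter, $\E[(1+\|X_s\|+\|Y_s\|)\|X_s-Y_s\|^2]$ is not controlled by a second-moment multiple of $\E\|X_s-Y_s\|^2$, so the Gr\"onwall estimate would not close in $L^2$.
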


\begin{proof}
By Lu--Lu--Nolen~\cite[Theorem~2.7 and Theorem~2.4]{LuLuNolen2019}, the mean-field SVGD flow \eqref{eq:mf-svgd} is stable
on finite time intervals. In particular, for initial laws $\nu^1,\nu^2$
satisfying $\int\|x\|^2 \, d\nu^i \le R_i$ for some $R_i>0, i=1,2$, the corresponding mean-field solutions $\mus^1,\mus^2$ satisfy
\begin{equation}\label{eq:lln-stability}
\sup_{0\le s\le t}W_1(\mus^1,\mus^2) \le \sup_{0\le s\le t}W_2(\mus^1,\mus^2)
\le A(t,R_1,R_2) W_2(\nu^1,\nu^2).
\end{equation}
Their proof gives a constant of at most double-exponential growth:
\begin{equation}\label{eq:A-double-exp}
A(t,R_1,R_2)\le C\exp\{c(R_1 + R_2)\exp(ct)\},
\end{equation}
where $C,c$ depend on $k,V$ and not $R_1,R_2$.

Applying \eqref{eq:lln-stability} with initial data $\mu_0^N$ and $\mu_0$, and using the
identification of the particle empirical dynamics with the mean-field equation started from the
empirical initial measure, gives 
\begin{equation}\label{eq:w1-poc}
\sup_{0\le s\le t}W_1(\muNs,\mus)
\le C\exp\{c(1 + R)\exp(ct)\} W_2(\mu_0^N,\mu_0),
\end{equation}
on the event $\{\int \|x\|^2 \mu^N_0(dx) \le R\}$.

Next we convert this Wasserstein comparison into Langevin KSD. Shi--Mackey
\cite[Lemma~4]{ShiMackey2023} prove a KSD--Wasserstein comparison of the form
\begin{equation}\label{eq:ksd-w1}
\KSD_\pi(\rho,\nu)
\le C_1 W_1(\rho,\nu)
+
C_2\{M_{\nu,\pi} W_1(\rho,\nu)\}^{1/2},
\end{equation}
where $M_{\nu,\pi}:= \E_{(X,Y) \sim \nu \otimes \pi} [\|X-Y\|^2]$. By the finite-time moment bounds furnished by~Lu--Lu--Nolen~\cite[Theorem~2.4]{LuLuNolen2019}, for every finite $t$,
\begin{equation}\label{eq:moment-growth}
\sup_{0\le s\le t} M_{\mus,\pi}
\le C\exp\{ct\}.
\end{equation}
Combining \eqref{eq:w1-poc}, \eqref{eq:ksd-w1} and \eqref{eq:moment-growth}, and enlarging
constants, yields on the event $\{\int \|x\|^2 \mu^N_0(dx) \le R\}$,
\begin{equation}\label{Kpath}
\sup_{0\le s\le t}
\KSD_\pi(\muNs,\mus)
\le
C\exp\{c(1 + R)\exp(ct)\}
\left[
W_2(\mu_0^N,\mu_0)
+
W_2(\mu_0^N,\mu_0)^{1/2}
\right].
\end{equation}
Taking expectations and using Jensen's inequality gives
\begin{multline*}
\E\,\left[\sup_{0\le s\le t} \KSD_\pi(\muNs,\mus)\, \mathbf{1}_{\{\int \|x\|^2 \mu^N_0(dx) \le R\}}\right]\\
\le
C\exp\{c(1 + R)\exp(ct)\}
\left[
\E W_2(\mu_0^N,\mu_0)
+
\{\E W_2(\mu_0^N,\mu_0)\}^{1/2}
\right].
\end{multline*}
By the assumed initial approximation rate \eqref{eq:init-w1-rate},
\[
\E W_2(\mu_0^N,\mu_0)
+
\{\E W_2(\mu_0^N,\mu_0)\}^{1/2}
\le
C N^{-\beta/2}.
\]
Combining the above two bounds gives \eqref{eq:finite-time-poc}.
\end{proof}
The following lemma is essentially \cite[Theorem 1]{BBG2025}, upon recalling the assumptions
$$
\limsup_{n \to \infty}N^{-1}\KL(P_0^N \| \pi^{\otimes N}) < \infty
$$
and $C^*< \infty$.
\begin{lemma}[Time-averaged finite-particle convergence to $\pi$]\label{ass:particle-to-pi}
Let Assumption~\ref{ass:poc}, without the requirement \eqref{eq:init-w1-rate}, hold. There is a constant $C_1<\infty$ such that, for all $N\ge 1$ and $T>0$,
\begin{equation}\label{eq:particle-to-pi}
\E\,\KSD(\overline\mu_T^N\|\pi)
\le C_1\left(\frac1{\sqrt T}+\frac1{\sqrt N}\right).
\end{equation}
\end{lemma}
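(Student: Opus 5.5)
We prove the bound by following the joint-particle entropy-dissipation argument of \cite{BBG2025}, applied to the law $P_t^N$ of the whole configuration $(x_1^N(t),\dots,x_N^N(t))$.

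\emph{Step 1: entropy identity.} The particle ODE \eqref{eq:svgd-continuous} is a deterministic flow on $\R^{dN}$. Its velocity field $\mathbf v^N$ is built from the $C_b^4$ kernel and the Lipschitz score, so $P_t^N$ has a density $p_t^N$ solving the Liouville equation $\partial_t p_t^N+\nabla\cdot(\mathbf v^N p_t^N)=0$. Differentiating $\KL(P_t^N\|\pi^{\otimes N})$ along this equation and integrating by parts gives
\[
\frac{d}{dt}\KL(P_t^N\|\pi^{\otimes N})=-\E\bigl[\mathbf v^N\cdot\nabla \log \pi^{\otimes N}+\nabla\cdot \mathbf v^N\bigr](x^N(t)).
\]
We then expand both terms. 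Summing the drift contributions over $i$ gives
\[
\frac1N\sum_{i,j}\Bigl[-k(x_i,x_j)\nabla V(x_j)\cdot(-\nabla V(x_i))+\nabla_2k(x_i,x_j)\cdot(-\nabla V(x_i))\Bigr].
\]
Including the divergence terms, the off-diagonal part assembles exactly into $N\|S_\pi(\muNt)\|_{\cH^d}^2=N\,\KSD^2(\muNt\|\pi)$. The diagonal terms $i=j$ do not assemble; they produce a correction equal to $\frac1N\sum_i C^*(x_i)$, with $C^*$ from \eqref{eq:Cstardef}. This sign and index bookkeeping, matching the diagonal remainder exactly to $C^*(z)$, is the main obstacle. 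We follow the computation of \cite[Theorem 1]{BBG2025} and use the Stein identity for the $\pi$-terms. The outcome is
\[
\frac{d}{dt}\KL(P_t^N\|\pi^{\otimes N})=-N\,\E\,\KSD^2(\muNt\|\pi)+\E\frac1N\sum_i C^*(x_i^N(t))\le -N\,\E\,\KSD^2(\muNt\|\pi)+C^*.
\]

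\emph{Step 2: integrate in time.} Integrating over $[0,T]$ and using $\KL\ge0$,
\[
\frac1T\int_0^T\E\,\KSD^2(\muNt\|\pi)\,dt\le \frac{\KL(P_0^N\|\pi^{\otimes N})}{NT}+\frac{C^*}{N}\le \frac{C}{T}+\frac{C^*}{N}.
\]
The last inequality uses $\sup_N N^{-1}\KL(P_0^N\|\pi^{\otimes N})<\infty$. To justify the computation rigorously, we regularize (smooth, compactly supported truncations) as in \cite{BBG2025}. This is possible because $p_0^N\in C^2$ and the flow is globally well-posed.

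\emph{Step 3: pass to the time average.} The map $\rho\mapsto S_\pi(\rho)$ is linear, so $S_\pi(\overline\mu_T^N)=\frac1T\int_0^T S_\pi(\muNt)\,dt$. Convexity of the norm and Jensen's inequality then give
\[
\E\,\KSD(\overline\mu_T^N\|\pi)\le\Bigl(\frac1T\int_0^T\E\,\KSD^2(\muNt\|\pi)\,dt\Bigr)^{1/2}\le \sqrt{C/T+C^*/N}.
\]
Applying $\sqrt{a+b}\le\sqrt a+\sqrt b$ yields \eqref{eq:particle-to-pi}.
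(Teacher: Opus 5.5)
Your proposal is correct and follows the paper's route. The paper proves this lemma simply by invoking the joint-particle entropy-dissipation bound of \cite[Theorem 1]{BBG2025} under $\sup_N N^{-1}\KL(P_0^N\|\pi^{\otimes N})<\infty$ and $C^*<\infty$, which is what your Steps 1--3 reconstruct.

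One correction to how you describe the bookkeeping (the final identity you display is right): the full $(i,j)$ double sum, diagonal included, assembles into $N\,\KSD^2(\muNt\|\pi)$. The remainder $\frac1N\sum_i C^*(x_i)$ comes separately, from the divergence acting on the self-interaction term $j=i$ through the second slot of $k$ and through $\nabla V(x_j)$.
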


\begin{lemma}[Time-averaged mean-field convergence to $\pi$]\label{ass:mean-field-to-pi}
Let $\KL(\muz\|\pi)<\infty$. There is a constant $C_2<\infty$ such that, for all $T>0$,
\begin{equation}\label{eq:mean-field-to-pi}
\KSD(\overline\mu_T\|\pi)
\le \frac{C_2}{\sqrt T}.
\end{equation}
\end{lemma}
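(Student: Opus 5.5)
We prove Lemma~\ref{ass:mean-field-to-pi} by combining the entropy-dissipation identity for the mean-field flow with convexity of the Stein witness map.

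\textbf{Step 1: entropy dissipation.} Along the mean-field SVGD flow \eqref{eq:mf-svgd} started from $\muz$ with $\KL(\muz\|\pi)<\infty$, the dissipation identity recalled in the introduction gives
\[
\frac{d}{dt}\KL(\mut\|\pi)=-\|v_{\mut}\|_{\cH^d}^2=-\KSD^2(\mut\|\pi).
\]
We will invoke this for the weak solutions of \cite{LuLuNolen2019}, as established in \cite{Liu2017,LuLuNolen2019,KorbaSalimArbelLuiseGretton2020}. Integrating over $[0,T]$ and using $\KL\ge0$ yields
\[
\int_0^T\KSD^2(\mut\|\pi)\,dt\le \KL(\muz\|\pi).
\]

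\textbf{Step 2: pass to the time average.} The witness map $S_\pi$ is linear in the measure. Hence, interpreting the Bochner integral in $\cH^d$,
\[
S_\pi(\overline\mu_T)=\frac1T\int_0^T S_\pi(\mut)\,dt.
\]
Justifying this interchange of $\int\cdots\rho(dx)$ with $\int_0^T\cdots dt$ requires measurability of $t\mapsto S_\pi(\mut)$ and integrability of the feature map $x\mapsto -k(\cdot,x)\nabla V(x)+\nabla_2k(\cdot,x)$ in $\cH^d$ against $\overline\mu_T$. Integrability holds because the $\cH^d$-norm of this feature map is bounded by $C(1+\|\nabla V(x)\|)\le C(1+\|x\|)$, using the bounded kernel derivatives and Lipschitz score from Assumption~\ref{ass:poc}, together with the finite-time second-moment bounds \eqref{eq:moment-growth}. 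Minkowski's inequality for Bochner integrals followed by Cauchy--Schwarz then gives
\[
\KSD(\overline\mu_T\|\pi)=\|S_\pi(\overline\mu_T)\|_{\cH^d}\le \frac1T\int_0^T\KSD(\mut\|\pi)\,dt\le\Big(\frac1T\int_0^T\KSD^2(\mut\|\pi)\,dt\Big)^{1/2}\le \frac{\sqrt{\KL(\muz\|\pi)}}{\sqrt T}.
\]
This proves the lemma with $C_2=\KL(\muz\|\pi)^{1/2}$.

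\textbf{Main obstacle.} The difficulty lies in Step 1. The dissipation identity is only formal for weak solutions, so we need at least the inequality $\KL(\mu_T\|\pi)+\int_0^T\KSD^2\le\KL(\muz\|\pi)$. The plan is to follow \cite{LuLuNolen2019}, where $\mu_t$ is the pushforward of $\muz$ by the flow map of the smooth, locally Lipschitz velocity $v_{\mu_t}$. Because the kernel lies in $C_b^4$, this velocity is $C^1$, so $\mu_t$ has a density, and the change-of-variables formula for $\log\det D\Phi_t$ gives $\frac{d}{dt}\KL$ explicitly. The integration by parts that turns $\int \langle\nabla\log(\mu_t/\pi), v_{\mu_t}\rangle d\mu_t$ into $\|v_{\mu_t}\|^2_{\cH^d}$ uses the reproducing property. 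This step needs only second moments (ensured by \eqref{eq:moment-growth}) and a truncation/limiting argument to discard boundary terms. If only an inequality survives the limit, it still suffices, since only an upper bound on $\int_0^T\KSD^2$ is used.
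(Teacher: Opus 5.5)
Your proposal is correct and follows the paper's proof. Both integrate the entropy-dissipation identity to get $\int_0^T \KSD^2(\mu_t\|\pi)\,dt\le \KL(\mu_0\|\pi)$, then use linearity of the Stein witness map (convexity of $\KSD(\cdot\|\pi)$) and Cauchy--Schwarz, giving $C_2=\KL(\mu_0\|\pi)^{1/2}$. Your added justifications, namely the Bochner-integral interchange and reducing the dissipation identity to an inequality along the flow-map representation, go beyond the paper, which simply invokes the identity as observed in \cite{KorbaSalimArbelLuiseGretton2020}.
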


\begin{proof}
This follows from the usual mean-field entropy dissipation identity
\[
\frac{d}{dt}\KL(\mut \|\pi)=-\KSD^2(\mut \|\pi),
\]
and by assumption $\KL(\muz\|\pi)<\infty$.  Indeed, as observed in~\cite[Proposition 3]{KorbaSalimArbelLuiseGretton2020},
\[
\frac1T\int_0^T \KSD^2(\mut\|\pi)\,dt
\le \frac{\KL(\muz\|\pi)}{T},
\]
while convexity of $\KSD(\cdot\|\pi)$ yields
\[
\KSD(\overline\mu_T\|\pi)
\le \frac1T\int_0^T \KSD(\mut\|\pi)\,dt
\le \left(\frac1T\int_0^T \KSD^2(\mut \|\pi)\,dt\right)^{1/2}.\qedhere
\]
\end{proof}

\subsection{Uniform-in-averaging-horizon propagation-of-chaos in KSD}

\begin{theorem}[Uniform-in-$T$ POC for the time-averaged flow in Langevin KSD]\label{prop:uniform-time-avg-poc}
Let Assumption~\ref{ass:poc} hold. Recall the constant $c_0$ from Lemma~\ref{lem:finite-time-poc}. Then, there exists a finite constant $C>0$ such that, for any $\delta>0$ and all sufficiently large $N$,
\begin{equation}\label{eq:main-bound-b}
\sup_{T>0}
\E\,\left[\KSD_\pi\bigl(\overline\mu_T^N,\overline\mu_T\bigr)\, \mathbf{1}_{\{\int \|x\|^2 \mu^N_0(dx) \le R\}}\right]
\le C N^{-\beta/2}\exp\{c_0(1+R)(\log N)^{c_0\delta}\} + C(\delta\log\log N)^{-1/2}.
\end{equation}
Consequently,
\begin{equation}\label{KSDprob}
    \lim_{M \to \infty} \, \limsup_{N \to \infty}\,\sup_{T>0}\mathbb{P}\left[\KSD_\pi\bigl(\overline\mu_T^N,\overline\mu_T\bigr) \ge M (\log \log N)^{-1/2}\right] = 0.
\end{equation}
Moreover, suppose that the empirical initial second moments satisfy the uniform sub-Gaussian bound
\[
R_N:=\int_{\mathbb R^d}|x|^2\,\mu_0^N(dx),
\qquad
\sup_{N\ge1}\mathbb E\exp\{\lambda R_N^2\}<\infty
\]
for some \(\lambda>0\). Then there exists \(C<\infty\) such that, for all sufficiently large \(N\),
\begin{equation}\label{KSDebd}
\sup_{T>0}
\mathbb E\,\KSD_\pi(\overline\mu_T^N,\overline\mu_T)
\le
C(\log\log N)^{-1/2}.
\end{equation}

\end{theorem}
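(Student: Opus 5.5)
The proof is a cutoff argument with threshold $T_N:=c_0^{-1}\delta\log\log N$, chosen so that $\exp(c_0T_N)=(\log N)^{\delta}$. We treat the two regimes $T\le T_N$ and $T> T_N$ separately.

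\emph{Short horizons, $T\le T_N$.} Since $\KSD_\pi(\cdot,\cdot)=\|S_\pi(\cdot)-S_\pi(\cdot)\|_{\cH^d}$ and $S_\pi$ is linear in the measure, we have $S_\pi(\overline\mu_T^N)-S_\pi(\overline\mu_T)=\frac1T\int_0^T\{S_\pi(\muNs)-S_\pi(\mus)\}\,ds$. This gives $\KSD_\pi(\overline\mu_T^N,\overline\mu_T)\le \sup_{0\le s\le T}\KSD_\pi(\muNs,\mus)$. Multiplying by the indicator and applying Lemma~\ref{lem:finite-time-poc} with $t=T_N$ gives the bound
\[
C_0N^{-\beta/2}\exp\{c_0(1+R)(\log N)^{c_0\delta}\}.
\]
The precise power of $\log N$ in the exponent depends only on how $T_N$ is normalized, so I would choose the constant in $T_N$ to match the displayed form.

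\emph{Long horizons, $T>T_N$.} By \eqref{eq:triangle-pi}, Lemma~\ref{ass:particle-to-pi} and Lemma~\ref{ass:mean-field-to-pi}, dropping the indicator,
\[
\E\,\KSD_\pi(\overline\mu_T^N,\overline\mu_T)\le (C_1+C_2)T^{-1/2}+C_1N^{-1/2}.
\]
For $T>T_N$ this is at most $C(\delta\log\log N)^{-1/2}$ for large $N$. Taking the maximum of the two regimes yields \eqref{eq:main-bound-b}.

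\emph{Probability bound \eqref{KSDprob}.} Fix $R$ and choose $\delta$ small enough, say $c_0\delta<1$. Then $(1+R)(\log N)^{c_0\delta}=o(\log N)$, so the first term of \eqref{eq:main-bound-b} is $o((\log\log N)^{-1/2})$. Markov's inequality on the event $\{R_N\le R\}$ then gives
\[
\mathbb P[\KSD_\pi\ge M(\log\log N)^{-1/2}]\le C/(M\sqrt\delta)+o(1)+\sup_N\mathbb P(R_N>R).
\]
Tightness of $R_N$ follows from $\E R_N\le 2\E W_2^2(\mu_0^N,\mu_0)+2\int\|x\|^2d\mu_0$; this is the one point needing care. \eqref{eq:init-w1-rate} gives only $\E W_2$, but $W_2\to0$ in probability already gives $W_2^2(\mu^N_0,\mu_0)=O_P(1)$, hence $R_N=O_P(1)$. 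Letting $N\to\infty$, then $M\to\infty$, then $R\to\infty$ proves the claim.

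\emph{Expectation bound \eqref{KSDebd}.} This is the main obstacle. We must control the contribution of the event $\{R_N>R\}$ for the short-horizon case, since the long-horizon bound needs no indicator. I would use a dyadic decomposition $R\in[2^j,2^{j+1})$ and bound each piece by Cauchy--Schwarz. In this decomposition, \eqref{Kpath} controls $\sup_{s\le T_N}\KSD_\pi(\muNs,\mus)$ pathwise by $\exp\{c(1+R_N)(\log N)^{\delta}\}[W_2+W_2^{1/2}]$. The sub-Gaussian tail $\mathbb P(R_N>r)\le Ce^{-\lambda r^2}$ dominates $\exp\{cr(\log N)^{\delta}\}$ once $r\gtrsim(\log N)^{\delta}$. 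Below that scale the factor is at most $\exp\{C(\log N)^{2\delta}\}$, which is beaten by $N^{-\beta/2}$ (or $N^{-\beta/4}$ after Cauchy--Schwarz) when $2\delta<1$. This needs a second-moment bound on $W_2(\mu_0^N,\mu_0)$, which follows from $W_2^2\le 2R_N+2\int\|x\|^2d\mu_0$ and the sub-Gaussian moments. If that route fails, I would instead use the cruder bound $\KSD_\pi\le C(1+\text{second moments})$ from bounded kernel derivatives and the Lipschitz score, together with finite-time moment growth. Summing the pieces gives $o((\log\log N)^{-1/2})+C(\log\log N)^{-1/2}$, which is \eqref{KSDebd}.
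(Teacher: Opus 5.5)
Your proposal is correct and follows essentially the same route as the paper. That route is a cutoff at $T_N\asymp\log\log N$, with Lemma~\ref{lem:finite-time-poc} for short horizons, the triangle inequality through $\pi$ with Lemmas~\ref{ass:particle-to-pi}--\ref{ass:mean-field-to-pi} for long horizons, Markov plus tightness of $R_N$ for \eqref{KSDprob}, and the pathwise bound \eqref{Kpath} with sub-Gaussian control of $e^{a_NR_N}$ for \eqref{KSDebd}; taking exactly $T_N=\delta\log\log N$ gives the displayed $(\log N)^{c_0\delta}$. Two small points: your dyadic decomposition only reproduces the paper's one-line Young bound $\E e^{2a_NR_N}\le e^{a_N^2/\lambda}\E e^{\lambda R_N^2}$, and a bare second-moment bound on $D_N=W_2(\mu_0^N,\mu_0)$ gives no decay after Cauchy--Schwarz, so you should instead split $e^{a_NR_N}D_N=(e^{a_NR_N}D_N^{1/2})D_N^{1/2}$ (or interpolate $\E D_N^2\le(\E D_N)^{1/2}(\E D_N^3)^{1/2}$) to get the needed polynomial decay in $N$.
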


\begin{proof}
Fix $\delta>0$, and set
\[
T_N:=\delta\log\log N.
\]
We split the proof into the cases $T\le T_N$ and $T\ge T_N$.

\medskip
\noindent\textbf{Case 1: short averaging horizons, $0<T\le T_N$.}
By convexity of $\KSD_\pi$ in each argument, or equivalently by the norm representation in terms of the Stein witness map,
\begin{align*}
\KSD_\pi(\overline\mu_T^N,\overline\mu_T)
&=\left\|\frac1T\int_0^T\{S_\pi(\muNt)-S_\pi(\mut)\}\,dt\right\|_{\cH^d} \\
&\le \frac1T\int_0^T \KSD_\pi(\muNt,\mut)\,dt.
\end{align*}
Taking expectations and using Lemma~\ref{lem:finite-time-poc},
\begin{align}\label{eq:short-time-bound}
\E\left[\sup_{0\le T\le T_N}\KSD_\pi(\overline\mu_T^N,\overline\mu_T)\, \mathbf{1}_{\{\int \|x\|^2 \mu^N_0(dx) \le R\}}\right]
\le C_0 N^{-\beta/2}\exp\{c_0(1+R)\exp(c_0 T_N)\}.
\end{align}
Since $T_N=\delta\log\log N$,
\(
\exp(c_0T_N)=\exp(c_0\delta\log\log N)=(\log N)^{c_0\delta}.
\)

\medskip
\noindent\textbf{Case 2: long averaging horizons, $T\ge T_N$.}
Using the triangle inequality \eqref{eq:triangle-pi} for the Langevin KSD with reference $\pi$, followed by $\KSD_\pi(\rho,\pi)=\KSD(\rho\|\pi)$, we obtain
\begin{align*}
\KSD_\pi(\overline\mu_T^N,\overline\mu_T)
&\le \KSD_\pi(\overline\mu_T^N,\pi)+\KSD_\pi(\pi,\overline\mu_T) \\
&= \KSD(\overline\mu_T^N\|\pi)+\KSD(\overline\mu_T\|\pi).
\end{align*}
Taking expectations and applying Lemmas \ref{ass:particle-to-pi} and \ref{ass:mean-field-to-pi},
\[
\E\,\KSD_\pi(\overline\mu_T^N,\overline\mu_T)
\le C_1\left(\frac1{\sqrt T}+\frac1{\sqrt N}\right)+\frac{C_2}{\sqrt T}.
\]
Since $T\ge T_N$,
\begin{equation}\label{eq:long-time-bound}
\E\,\KSD_\pi(\overline\mu_T^N,\overline\mu_T)
\le C\left(\frac1{\sqrt{T_N}}+\frac1{\sqrt N}\right)
\le C(\delta\log\log N)^{-1/2}.
\end{equation}

Combining \eqref{eq:short-time-bound} and \eqref{eq:long-time-bound} gives \eqref{eq:main-bound-b}.

To prove \eqref{KSDprob}, observe that by the initialization assumptions,
$$
\sup_{N \ge 1} \E\left[\left(\int_{\mathbb{R}^d} \|x\|^2 \,\mu^N_0(dx)\right)^{1/2}\right] \le \sqrt{2}\sup_{N \ge 1}\,\E\,\left[W_2(\mu^N_0,\mu_0)\right] + \sqrt{2}\left( \int_{\mathbb R^d}\|x\|^2\,\mu_0(dx)\right)^{1/2} < \infty.
$$
Using this observation, along with the bound \eqref{eq:main-bound-b}, we obtain for any $R,\delta>0$,
\begin{align*}
&\sup_{T>0}\mathbb{P}\left[\KSD_\pi\bigl(\overline\mu_T^N,\overline\mu_T\bigr) \ge M (\log \log N)^{-1/2}\right]\\
&\le \sup_{T>0}\mathbb{P}\left[\KSD_\pi\bigl(\overline\mu_T^N,\overline\mu_T\bigr) \ge M (\log \log N)^{-1/2}, \, \int_{\mathbb{R}^d} \|x\|^2\, \mu^N_0(dx) \le R\right] + \mathbb{P}\left[\int_{\mathbb{R}^d} \|x\|^2 \,\mu^N_0(dx) > R\right]\\
&\le C M^{-1}(\delta \log \log N)^{1/2}\left[N^{-\beta/2}\exp\{c_0(1+R)(\log N)^{c_0\delta}\} + (\delta\log\log N)^{-1/2}\right] + CR^{-1/2}.
\end{align*}
Choosing and fixing $\delta>0$ such that $c_0\delta<1$, we thus obtain
\begin{equation*}
    \limsup_{N \to \infty}\sup_{T>0}\mathbb{P}\left[\KSD_\pi\bigl(\overline\mu_T^N,\overline\mu_T\bigr) \ge M (\log \log N)^{-1/2}\right] \le CM^{-1} + CR^{-1/2}.
\end{equation*}
Taking $M \to \infty$ above,
$$
\lim_{M \to \infty}\,\limsup_{N \to \infty}\sup_{T>0}\mathbb{P}\left[\KSD_\pi\bigl(\overline\mu_T^N,\overline\mu_T\bigr) \ge M (\log \log N)^{-1/2}\right] \le CR^{-1/2}.
$$
Since $R>0$ is arbitrary, \eqref{KSDprob} follows.

We now show \eqref{KSDebd} under the additional uniform exponential-square moment bound on \(R_N\). By \eqref{Kpath},
\[
\sup_{0\le s\le t}\KSD_\pi(\mu_s^N,\mu_s)
\le
C\exp\{c(1 + R_N)\exp(ct)\}
\left(D_N+D_N^{1/2}\right),
\qquad
D_N:=W_2(\mu_0^N,\mu_0).
\]
Write $a_N:=c e^{cT_N}=c(\log N)^{c\delta}$. Note that, by Young's inequality, $2a_N R_N \le \lambda R_N^2 + \lambda^{-1} a_N^2$, which gives $C \in (0,\infty)$ such that for any $N \ge 1$,
$$
\E\left[e^{2a_N R_N}\right] \le e^{a_N^2/\lambda}\E\left[e^{\lambda R_N^2}\right].
$$
Using this observation, along with \(D_N^2\le 2R_N+2\int |x|^2\,d\mu_0\), the exponential-square bound on \(R_N\), and \(\mathbb E D_N\le C_{\rm init}N^{-\beta}\), we obtain by H\"older's inequality,
\[
\mathbb E\left[e^{a_NR_N}\left(D_N+D_N^{1/2}\right)\right]
\le
C\exp\{a_N^2/\lambda\}N^{-\beta/3}.
\]
Taking \(T_N=\delta\log\log N\) with \(2c\delta <1\), the short-time contribution is
\[
\mathbb{E}\left[\sup_{0\le s\le T_N}\KSD_\pi(\mu_s^N,\mu_s)\right] \le CN^{-\frac{\beta}{3} + o(1)}.
\]
The long-time estimate is unchanged:
\[
\sup_{T\ge T_N}
\mathbb E\,\KSD_\pi(\overline\mu_T^N,\overline\mu_T)
\le
C\left(T_N^{-1/2}+N^{-1/2}\right)
\le
C(\log\log N)^{-1/2}.
\]
Combining the two regimes proves \eqref{KSDebd}.
\end{proof}

\section{Results in Wasserstein-1 distance}\label{sec:w1}

We record a Wasserstein--1 analogue of the uniform-in-averaging-horizon KSD estimate.  The propagation-of-chaos in a stronger metric requires a detailed comparison between rates of KSD and Wasserstein convergence. This has been recently achieved by Kanagawa--Barp--Gretton--Mackey~\cite{KanagawaBarpGrettonMackey2025} when the kernel has a special normalized-bilinear $+$ Mat\'ern structure.  This keeps the SVGD kernel bounded, while still giving enough linear-growth test functions in the Stein RKHS to control first moments.

\begin{assumption}
\label{ass:w1-normalized-matern}
The following conditions hold.
\begin{enumerate}[label=(\roman*),leftmargin=*]
\item  \textbf{Kernel.}  Fix $\tau>0$, a strictly positive definite matrix $\Sigma\in\R^{d\times d}$, and a Mat\'ern smoothness parameter $\nu>2$.  Define
\[
        w_1(x):=(\tau^2+\|x\|^2)^{1/2},
        \qquad
        \overline k_{\rm lin}(x,y)
        :=\frac{\tau^2+\langle x,y\rangle}{w_1(x)w_1(y)}.
\]
Let
\[
        \Phi_{\nu,\Sigma}(z)
        :=\frac{2^{1-(d/2+\nu)}}{\Gamma(d/2+\nu)}
          \|\Sigma z\|^{\nu}_2 K_{-\nu}(\|\Sigma z\|_2),
        \qquad z\in\R^d,
\]
where $K_{-\nu}$ is the modified Bessel function of the second kind.  The SVGD kernel is
\[
        k_+(x,y):=\Phi_{\nu,\Sigma}(x-y)+\overline k_{\rm lin}(x,y).
 \]

\item \textbf{Potential regularity and growth.}  The potential $V$ is thrice continuously differentiable and, for some $L < \infty$,
\begin{equation}\label{eq:V-growth-hessian}
        \norm{\nabla V(x)}\le L(1+\norm{x}),
        \qquad
        \norm{\nabla^2V(x)}_{\mathrm{op}}\le L,
        \qquad x\in\R^d.
    \end{equation}
In particular, $\sup_x \Delta V(x)<\infty$.

\item \textbf{Langevin stability assumptions.} Let
\[
        \, d Z_t=-\frac12\nabla V(Z_t)\,\, d t+\, d B_t,
        \qquad Z_0=x,
\]
be the ordinary Langevin diffusion with invariant law $\pi$.  There are constants $a>0$ and $\beta_0,\beta_1\ge0$ such that the following dissipativity assumption holds (see \cite[Th1]{barp2024targeted})
\begin{equation}\label{eq:dissipativity-assumption}
        -\langle x,\nabla V(x)\rangle+d
        \le -a\|x\|^2+\beta_1\|x\|+\beta_0,
        \qquad x\in\R^d .
\end{equation}
The diffusion has $W_s$-rate functions $\rho_s:[0,\infty)\to[0,\infty)$, $s=1,2$, satisfying
\[
        W_s\bigl(\Law(Z_t^x),\Law(Z_t^y)\bigr)
        \le \rho_s(t)\|x-y\|,
        \qquad x,y\in\R^d,
        \qquad t\ge0,
\]
and
\[
        \int_0^\infty \rho_1(t)\,\, d t<\infty .
\]
See \cite[Assumption 4]{KanagawaBarpGrettonMackey2025}.

\item \textbf{Initial data.}  We make the same initialization assumption as in Assumption~\ref{ass:poc}.
\end{enumerate}

Well-posedness of the particle system and mean-field equations again follow from Assumption~\ref{ass:w1-normalized-matern} and \cite{LuLuNolen2019}.

\end{assumption}

\subsection{Key estimates}
The following lemma furnishes the required finite-time stability by adapting the arguments of Lu--Lu--Nolen~\cite{LuLuNolen2019}.
\begin{lemma}[Finite-time propagation-of-chaos in $W_1$]
\label{lem:finite-time-w1}
Let Assumption~\ref{ass:w1-normalized-matern} hold.  Then there exist constants $C_0,c_0<\infty$ such that, for any $R>0$, $N\ge3$ and all $t\ge0$,
\begin{equation}\label{W1stab}
        \mathbb E\Big[\sup_{0\le s\le t}
        W_1(\muNs,\mus)\, \mathbf{1}_{\{\int \|x\|^2 \mu^N_0(dx) \le R\}}\Big]
        \le C_0N^{-\beta}\exp\{c_0(1+R)\exp(c_0t)\}.
\end{equation}
\end{lemma}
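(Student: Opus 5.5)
The plan is to reproduce the stability mechanism behind \eqref{eq:lln-stability}, but now for the kernel $k_+$ and potential of Assumption~\ref{ass:w1-normalized-matern}, and to stop at $W_1$ without passing through KSD. This explains why the rate is $N^{-\beta}$ rather than the $N^{-\beta/2}$ of Lemma~\ref{lem:finite-time-poc}: no square-root loss from \eqref{eq:ksd-w1} occurs. As before, we use that the empirical measure $\muNs$ is itself a weak solution of \eqref{eq:mf-svgd} started from $\mu^N_0$. It then suffices to prove a deterministic stability estimate
\[
\sup_{0\le s\le t}W_1(\mus^1,\mus^2)\le C\exp\{c(R_1+R_2)\exp(ct)\}\,W_2(\nu^1,\nu^2)
\]
for any two weak solutions whose initial laws have second moments at most $R_1,R_2$. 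We then apply it on the event $\{\int\|x\|^2\mu^N_0(dx)\le R\}$ with $R_2=\int\|x\|^2d\mu_0$ absorbed into constants, take expectations, and invoke \eqref{eq:init-w1-rate}.

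\textbf{Step 1: kernel regularity.} We check that $k_+$ satisfies the hypotheses used by Lu--Lu--Nolen.
\begin{itemize}
\item The Mat\'ern part with $\nu>2$ is $C_b^2$ with bounded derivatives, hence bounded and Lipschitz together with its gradients.
\item The normalized bilinear part $\overline k_{\rm lin}$ is bounded by $1$ (by Cauchy--Schwarz). Its derivatives decay like $1/w_1$, so $\overline k_{\rm lin}$, $\nabla_2\overline k_{\rm lin}$ and their $x$-derivatives are globally bounded and Lipschitz.
\item Combined with $\|\nabla V(y)\|\le L(1+\|y\|)$ and $\|\nabla^2V\|_{\rm op}\le L$, the integrand $F(x,y)=-k_+(x,y)\nabla V(y)+\nabla_2k_+(x,y)$ satisfies
\[
|F(x,y)-F(x',y')|\le C(1+\|y\|+\|y'\|)(\|x-x'\|+\|y-y'\|),
\qquad
|F(x,y)|\le C(1+\|y\|).
\]
\end{itemize}
This is exactly the structure of Lu--Lu--Nolen's Assumption~2.2 with $q=2$.

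\textbf{Step 2: moment bounds.} From $|v_\rho(x)|\le C(1+\int\|y\|\,\rho(dy))$ we get $\frac{d}{dt}\int\|x\|^2d\mu_t\le C(1+\int\|x\|^2d\mu_t)$. Hence $\int\|x\|^2d\mu_s\le (1+R)e^{Cs}$, and the same holds for the particle system on the given event.

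\textbf{Step 3: coupling and Gr\"onwall.} This step is the heart of the argument. Take an optimal $W_2$ coupling $(X_0,Y_0)$ of $(\nu^1,\nu^2)$ and flow both points along the respective characteristics $\dot X=v_{\mu^1_s}(X)$, $\dot Y=v_{\mu^2_s}(Y)$. Then
\[
|v_{\mu^1}(X)-v_{\mu^2}(Y)|\le C\bigl(1+m_2(s)^{1/2}\bigr)\|X-Y\|+C\,\E\bigl[(1+\|X'\|+\|Y'\|)\|X'-Y'\|\bigr],
\]
where $(X',Y')$ is an independent copy. The main obstacle is that the second term is weighted by $\|y\|$ and so is not directly bounded by the first moment of $\|X-Y\|$. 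I would handle it by Cauchy--Schwarz: it is at most $C(1+m_2(s))^{1/2}(\E\|X-Y\|^2)^{1/2}$.

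\textbf{Step 4: closing the estimate.} Running the Gr\"onwall argument for $\E\|X_s-Y_s\|^2$ gives a rate $C(1+m_2(s))$, which grows like $(1+R)e^{Cs}$. Integrating yields the factor $\exp\{c(1+R)e^{ct}\}\,W_2(\nu^1,\nu^2)$, which controls $W_2$ and therefore $W_1$ along the flow.

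Finally, taking expectations on the event and using $\E W_2(\mu^N_0,\mu_0)\le C_{\rm init}N^{-\beta}$ yields \eqref{W1stab}.
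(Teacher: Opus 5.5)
Your proposal is correct and follows essentially the same route as the paper: a characteristic coupling started from an optimal $W_2$ coupling, Lipschitz bounds on the interaction integrand (with Cauchy--Schwarz for the $\|y\|$-weighted measure-dependence term), Gr\"onwall with a moment-dependent rate and exponential second-moment growth, and then $W_1\le W_2$ together with \eqref{eq:init-w1-rate}. The one minor difference is that you derive the moment bound directly from the boundedness of $k_+$ and the linear growth of $\nabla V$, whereas the paper verifies Lu--Lu--Nolen's Assumption~2.2 (via the quadratic growth of $V$ implied by dissipativity) and cites their Theorem~2.4. That assumption concerns the growth of $V$, not the integrand structure you attribute to it in Step~1, but your self-contained argument does not actually rely on it.
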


\begin{proof}
We first check that the finite-time stability argument of
Lu--Lu--Nolen~\cite[Theorem~2.4 and 2.7]{LuLuNolen2019} applies to the kernel \(k_+\).  The Matérn part
\(\Phi_{\nu,\Sigma}(x-y)\) is positive definite and belongs to
\(C_b^4(\mathbb R^d\times\mathbb R^d)\) when \(\nu>2\).  The normalized
bilinear part can be written as
\[
        \bar k_{\rm lin}(x,y)
        =
        \Big\langle
        \frac{(\tau,x)}{(\tau^2+\|x\|^2)^{1/2}},
        \frac{(\tau,y)}{(\tau^2+\|y\|^2)^{1/2}}
        \Big\rangle_{\mathbb R^{d+1}},
\]
and is therefore positive definite.  Since \(\tau>0\), the map
\(x\mapsto (\tau,x)(\tau^2+\|x\|^2)^{-1/2}\) is smooth with bounded
derivatives of all orders.  Hence \(k_+\) is a symmetric positive definite
kernel in \(C_b^4(\mathbb R^d\times\mathbb R^d)\).

It remains to check the growth assumptions on \(V\).  By Assumption~\ref{ass:w1-normalized-matern},
\[
        \|\nabla V(x)\|\le L(1+\|x\|),
        \qquad
        \|\nabla^2 V(x)\|_{\mathrm{op}}\le L .
\]
The dissipativity condition
\[
        -\langle x,\nabla V(x)\rangle+d
        \le -a\|x\|^2+\beta_1\|x\|+\beta_0
\]
implies, after adding a harmless constant to \(V\), that
\[
        1+\|x\|^2 \le C(1+V(x)).
\]
Indeed, integrating the resulting radial lower bound for
\(\theta\cdot\nabla V(r\theta)\) gives quadratic growth from below.
Consequently,
\[
        \|\nabla V(x)\|^2 \le C(1+V(x)),
\]
and the boundedness of \(\nabla^2V\) gives
\[
        \|\nabla^2V(\lambda x+(1-\lambda)y)\|^2
        \le C \le C(1+V(x)+V(y)).
\]
Thus Lu--Lu--Nolen~\cite[Assumption~2.2]{LuLuNolen2019}, with $q=2$, is satisfied.

Strictly speaking, the normalized bilinear term is not translation
invariant, so~\cite[Theorem~2.4 and 2.7]{LuLuNolen2019} do not apply verbatim.  However, their
Dobrushin stability proof only uses the boundedness of sufficiently many
kernel derivatives and the preceding \(q=2\) growth bounds.  For completeness
we recall the resulting estimate.  Put
\[
        B(x,y):=-k_+(x,y)\nabla V(y)+\nabla_2 k_+(x,y),
        \qquad
        v_\rho(x):=\int B(x,y)\,\rho(dy).
\]
The bounds just proved imply
\[
        |B(x,y)-B(x',y)|
        \le C(1+\|y\|)|x-x'|,
\]
and
\[
        |B(x,y)-B(x,z)|
        \le C(1+\|y\|+\|z\|)\|y-z\|.
\]
Therefore, if \(\rho_t,\nu_t\) are two mean-field solutions and
\(M_2(t):=\int\|x\|^2\rho_t(dx)+\int\|x\|^2\nu_t(dx)\), then the
coupling argument of~\cite[Theorem~2.7]{LuLuNolen2019} gives
\[
        \frac{d}{dt}W_2(\rho_t,\nu_t)
        \le C(1+M_2(t)^{1/2})\,W_2(\rho_t,\nu_t).
\]
The moment estimate~\cite[Theorem~2.4]{LuLuNolen2019} gives, on
each finite interval,
\[
        \sup_{0\le s\le t}M_2(s)\le C\exp(ct).
\]
Thus, if $\int\|x\|^2 \,\rho_0(dx) \le R_1$ and $\int\|x\|^2 \,\nu_0(dx) \le R_2$, Gronwall's inequality then yields
\[
        \sup_{0\le s\le t}W_2(\rho_s,\nu_s)
        \le A(t,R_1,R_2)W_2(\rho_0,\nu_0),
        \qquad
        A(t)\le C\exp\{c(R_1 + R_2)\exp(ct)\}.
\]
Since \(W_1\le W_2\), applying this with
\(\rho_0=\mu_0^N\) and \(\nu_0=\mu_0\), and using the identification of the
particle empirical law with the mean-field equation started from
\(\mu_0^N\), gives on the event $\{\int \|x\|^2 \mu^N_0(dx) \le R\}$,
\begin{equation}\label{w1path}
        \sup_{0\le s\le t}W_1(\muNs,\mus)
        \le C\exp\{c(1 + R)\exp(ct)\}\,W_2(\mu_0^N,\mu_0).
\end{equation}
Taking expectations and using the initialization assumption
\(
        \mathbb E\left[ W_2(\mu_0^N,\mu_0)\right] \le C_{\rm init}N^{-\beta}
\)
proves \eqref{W1stab}.
\end{proof}

\begin{lemma}[Time-averaged KSD convergence]
\label{lem:time-averaged-ksd}
Let Assumption~\ref{ass:w1-normalized-matern} hold.  There is a constant $C_1<\infty$ such that, for all $N\ge1$ and all $T>0$,
\[
        \E \KSD(\overline \mu_T^N \| \pi)
        \le C_1\left(T^{-1/2}+N^{-1/2}\right),
        \qquad
        \KSD(\overline\mu_T \| \pi )\le C_1T^{-1/2}.
\]
\end{lemma}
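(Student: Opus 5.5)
The plan is to mirror Lemmas~\ref{ass:particle-to-pi} and~\ref{ass:mean-field-to-pi}. The only new work is to check that the hypotheses behind \cite[Theorem~1]{BBG2025} and the mean-field entropy-dissipation identity hold for the kernel $k_+$ and the potential $V$ of Assumption~\ref{ass:w1-normalized-matern}. We argue first for the mean-field bound and then for the particle bound.

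\textbf{Mean-field bound.} The key identity is $\frac{d}{dt}\KL(\mu_t\|\pi)=-\KSD^2(\mu_t\|\pi)$. Integrating it over $[0,T]$ and using $\KL\ge0$ and $\KL(\mu_0\|\pi)<\infty$ gives
\[
\frac1T\int_0^T\KSD^2(\mu_t\|\pi)\,dt\le \frac{\KL(\mu_0\|\pi)}{T}.
\]
Convexity of $\rho\mapsto\KSD(\rho\|\pi)=\|S_\pi(\rho)\|_{\cH^d}$ holds because $S_\pi$ is linear in $\rho$. Combining it with Jensen's inequality then yields $\KSD(\overline\mu_T\|\pi)\le C T^{-1/2}$. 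To justify the dissipation identity, we use the finite-time regularity of the mean-field solution from \cite{LuLuNolen2019}. This applies because the proof of Lemma~\ref{lem:finite-time-w1} showed that $k_+\in C_b^4$ is positive definite and that $V$ satisfies their Assumption~2.2 with $q=2$. Together these give a density for $\mu_t$ with finite second moments and enough regularity to integrate by parts. The resulting computation is the standard one in \cite{Liu2017,KorbaSalimArbelLuiseGretton2020}.

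\textbf{Particle bound.} We invoke \cite[Theorem~1]{BBG2025}. Its proof is a joint-particle entropy-dissipation argument: $\KL(P_t^N\|\pi^{\otimes N})$ decreases at rate $N\,\E\KSD^2(\mu_t^N\|\pi)$, up to a self-interaction (diagonal) correction of size at most $C^*$ per unit time. Integrating, dividing by $NT$, and applying Jensen together with convexity of $\KSD$ gives
\[
\E\KSD(\overline\mu_T^N\|\pi)\le\Big(\tfrac{\KL(P_0^N\|\pi^{\otimes N})}{NT}+\tfrac{C^*}{N}\Big)^{1/2}\le C_1\left(T^{-1/2}+N^{-1/2}\right).
\]
This requires two inputs.
\begin{enumerate}[label=(\alph*),leftmargin=*]
\item The bound $\sup_N N^{-1}\KL(P_0^N\|\pi^{\otimes N})<\infty$. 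This is part of the initialization assumption inherited from Assumption~\ref{ass:poc}.
\item The bound $C^*=\sup_z C^*(z)<\infty$, with $C^*(z)$ as in \eqref{eq:Cstardef}. This must be verified directly for $k_+$.
\end{enumerate}

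\textbf{Main obstacle: $C^*<\infty$.} This is the step I expect to need the most care. The term $k_+(z,z)\Delta V(z)$ is bounded above because $k_+$ is bounded and $\Delta V\le dL$. The term $\Delta_2 k_+(z,z)$ is bounded because $k_+\in C_b^2$. The delicate term is $\nabla_2k_+(z,z)\cdot\nabla V(z)$, since $\nabla V$ grows linearly. For the Mat\'ern part, $\nabla\Phi_{\nu,\Sigma}(0)=0$ by radial symmetry and smoothness ($\nu>2$), so this contribution vanishes. For the normalized bilinear part, write $\phi(x)=(\tau,x)/w_1(x)$, so that $\overline k_{\rm lin}(z,z)=\|\phi(z)\|^2=1$. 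Then $\nabla_2\overline k_{\rm lin}(z,z)=D\phi(z)^\top\phi(z)=\tfrac12\nabla\|\phi\|^2(z)=0$. Hence $C^*(z)$ is bounded above uniformly in $z$. Since only an upper bound on $C^*$ enters the argument, this suffices.

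With both inputs verified, the particle estimate follows. Finally, we enlarge $C_1$ so that a single constant covers both bounds.
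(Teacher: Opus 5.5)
Your proposal is correct and follows essentially the same route as the paper. The paper simply invokes Lemmas~\ref{ass:particle-to-pi} and~\ref{ass:mean-field-to-pi} after noting that $C^*<\infty$ for $k_+$, because $\nabla_2 k_+(z,z)=0$ and $\sup_x\Delta V(x)<\infty$. Your explicit check of $\nabla_2 k_+(z,z)=0$ (evenness of the Mat\'ern part, and $\|\phi\|\equiv 1$ for the normalized bilinear part) fills in the detail the paper leaves implicit.
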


\begin{proof}
This follows as in Lemmas~\ref{ass:particle-to-pi} and \ref{ass:mean-field-to-pi} on observing that the bounded $C^*$ condition in  Assumption~\ref{ass:poc} holds with kernel $k_+$ as $\nabla_2 k_+(z,z)=0$ for all $z \in \mathbb{R}^d$ and $\sup_x \Delta V(x)<\infty$.
\end{proof}

\begin{lemma}[KSD-to-$W_1$ comparison for the normalized-bilinear--Mat\'ern kernel]
\label{lem:ksd-to-w1}
Let Assumption~\ref{ass:w1-normalized-matern} hold and let $D(\rho)=\KSD(\rho\Vert\pi)$ with kernel $k_+$.  Set
\[
        t_0:=\frac{7d}{6}+2\nu,
        \qquad
        \gamma:=\frac{1}{3(1+t_0)}.
\]
Then there is a constant $C_2<\infty$ such that every probability measure $\rho\in\cP_1(\R^d)$ with finite $D(\rho)$ satisfies
\[
        W_1(\rho,\pi)
        \le C_2\{D(\rho)^\gamma+D(\rho)\}.
\]
\end{lemma}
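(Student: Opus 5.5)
The plan is to follow the Stein-equation route of Kanagawa--Barp--Gretton--Mackey~\cite{KanagawaBarpGrettonMackey2025}, using the dual representation
\[
W_1(\rho,\pi)=\sup_{\Lip(h)\le1}\Big\{\int h\,d\rho-\int h\,d\pi\Big\}.
\]
For each $1$-Lipschitz $h$, we solve the Langevin Stein equation $T_\pi g_h = h-\pi(h)$ with $g_h(x)=-\int_0^\infty \nabla_x\E[h(Z_t^x)]\,dt$ (up to the factor $\tfrac12$ in the generator). Assumption~\ref{ass:w1-normalized-matern}(iii) gives $\|\nabla g_h\|_\infty$-type control from $\int_0^\infty\rho_1<\infty$. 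It also gives linear growth $\|g_h(x)\|\lesssim 1+\|x\|$, using dissipativity \eqref{eq:dissipativity-assumption}, and the Hessian bound in (ii) yields the corresponding regularity of derivatives. Since $g_h$ is generally not in $\cH^d$, we approximate it. First we mollify $g_h$ at scale $\epsilon$ and truncate or cut off at radius $r$, obtaining $\tilde g$. We then decompose $\tilde g$ into a piece representable through the normalized bilinear feature $x\mapsto (\tau,x)/w_1(x)$ multiplied by $w_1$, which captures the linear growth, plus a compactly supported smooth remainder lying in the Mat\'ern RKHS. The RKHS norm of that remainder is bounded via the Fourier characterization of $\Phi_{\nu,\Sigma}$ (a Sobolev space of order $\nu+d/2$), giving $\|\tilde g\|_{\cH^d}\lesssim \epsilon^{-a}r^{b}$ for explicit exponents $a,b$ depending on $d,\nu$.

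Next we write
\[
\int h\,d\rho-\pi(h)=\int T_\pi\tilde g\,d\rho+\int T_\pi(g_h-\tilde g)\,d\rho,
\]
and bound the first term by $\|\tilde g\|_{\cH^d}D(\rho)$. The error term is controlled by the mollification error ($\lesssim\epsilon(1+\|x\|)$ after using $\|\nabla V(x)\|\le L(1+\|x\|)$) on $\{\|x\|\le r\}$, and by a tail contribution $\int_{\|x\|>r}(1+\|x\|^2)\,d\rho$ outside the ball. To control these tails uniformly in terms of $D(\rho)$, we exploit that the normalized-bilinear part lets us take the test function $\varphi(x)=x\,w_1(x)^{-1}\cdot w_1(x)$ (suitably realized in $\cH^d$). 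Applying $T_\pi$ to it and using dissipativity yields
\[
\int\|x\|^2\,d\rho\lesssim 1+D(\rho).
\]
Here $-\langle x,\nabla V\rangle+d\le -a\|x\|^2+\dots$ gives coercivity, which in turn gives tail bounds (the linear-growth test functions are precisely what the kernel provides). Note also that if $\rho$ has no finite second moment this argument first gives a bound via a truncated test function.

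Collecting terms gives
\[
W_1(\rho,\pi)\lesssim \epsilon^{-a}r^{b}D(\rho)+\epsilon\,(1+D(\rho))+r^{-1}(1+D(\rho)).
\]
Optimizing over $\epsilon$ and $r$ for $D(\rho)\le1$ gives a power $D(\rho)^\gamma$, while for $D(\rho)\ge1$ we take $\epsilon,r$ of order one to get the linear term $C_2D(\rho)$. The exponent bookkeeping should reproduce $t_0=7d/6+2\nu$ and $\gamma=1/(3(1+t_0))$. Alternatively, and more economically, we would simply verify the hypotheses of \cite[Theorem on KSD controls $W_1$ with rate]{KanagawaBarpGrettonMackey2025} (kernel form, Assumption 4 rate functions, dissipativity from \cite{barp2024targeted}) and read off the exponent. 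The main obstacle is the RKHS approximation step: getting an explicit bound on $\|\tilde g\|_{\cH^d}$ for the Mat\'ern part with the right dependence on $\epsilon,r$, and correctly handling the linear-growth component through $\overline k_{\rm lin}$, since exactly this determines the exponent $\gamma$. I would first try to cite that step directly from \cite{KanagawaBarpGrettonMackey2025} rather than rederive it.
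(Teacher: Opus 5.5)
Your direct Stein-equation route has two genuine gaps.

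\textbf{The moment bound is false for $k_+$.} The bound $\int\|x\|^2\,d\rho\lesssim 1+D(\rho)$ does not hold. The RKHS of $\overline k_{\rm lin}$ consists only of the bounded functions $x\mapsto(c_0\tau+c^\top x)/w_1(x)$, so $\varphi(x)=x$ is not in $\cH^d$. Testing instead with $\varphi(x)=x/w_1(x)$ and \eqref{eq:dissipativity-assumption} gives only the first-moment bound $\int\|x\|\,d\rho\lesssim1+D(\rho)$. Second moments are genuinely uncontrolled: for $\pi=N(0,1)$ and $\rho_n=(1-\frac1n)\pi+\frac1n\delta_{n^{3/4}}$, one has $D(\rho_n)\asymp n^{-1/4}\to0$ while $\int x^2\,d\rho_n\asymp n^{1/2}$. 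Second-moment control is exactly what the unnormalized bilinear kernel of Section~\ref{sec:wasserstein-poc} is for. Since $T_\pi g_h=h-\pi(h)$ grows linearly, your truncation error is of order $\int_{\|x\|>r}(1+\|x\|)\,d\rho$. A first-moment bound alone does not make this $O(r^{-1}(1+D(\rho)))$.

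\textbf{The regularity claim is off by one derivative.} What $\int_0^\infty\rho_1<\infty$ gives is $\|g_h\|_\infty\lesssim\int_0^\infty\rho_1$, not a bound on $\nabla g_h$. In particular $g_h$ is bounded, so there is no linear-growth part of $\tilde g$ for $\overline k_{\rm lin}$ to absorb; a truncated mollified $\tilde g$ already lies in the Mat\'ern RKHS. Your estimate of $\nabla V\cdot(g_h-g_h*\phi_\epsilon)$ by $\epsilon(1+\|x\|)$ needs $g_h$ to be Lipschitz. That is a second-order Stein-factor bound, which the stated assumptions do not supply for merely Lipschitz $h$. The standard remedy is to smooth $h$ rather than $g_h$, and that is where a cube-root loss enters.

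\textbf{Your fallback.} Citing Kanagawa--Barp--Gretton--Mackey is what the paper does, but ``read off the exponent'' skips the step that actually produces $\gamma$. The paper applies \cite[Theorem~3.3]{KanagawaBarpGrettonMackey2025} with $q=1$, $q_m=0$, $m\equiv I_d$. That result bounds the pseudo-Lipschitz IPM $d_{\cF_1}$, not $W_1$:
\[
d_{\cF_1}(\pi,\rho)\le A\bigl(1\vee D^{t_0/(1+t_0)}\vee D^{(4/3)/(1+t_0)}\bigr)D^{1/(1+t_0)}\le A\{D^a+D+D^{7a/3}\},\qquad a:=(1+t_0)^{-1}.
\]
It then uses the comparison $\min\{c_1W_1,c_2W_1^3\}\le d_{\cF_1}$ from \cite[Equation~(16)]{KanagawaBarpGrettonMackey2025}, i.e.\ $W_1\le C\{d_{\cF_1}^{1/3}+d_{\cF_1}\}$. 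This cube is the price of passing from the smooth test class back to $1$-Lipschitz functions, and it is the source of the factor $3$ in $\gamma=a/3$.

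Finally, subadditivity of $x\mapsto x^{1/3}$ produces the exponents $a/3,\,1/3,\,7a/9,\,a,\,1,\,7a/3$. Since $\nu>2$ and $d\ge1$ give $a<6/37$, all of these lie in $[\gamma,1]$. The elementary bound $D^p\le D^\gamma+D$ for $p\in[\gamma,1]$ then yields the claim. Without the $d_{\cF_1}$-to-$W_1$ step, neither your optimization over $(\epsilon,r)$ nor a bare citation explains why $\gamma=1/(3(1+t_0))$.
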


\begin{proof}
We apply Kanagawa--Barp--Gretton--Mackey \cite[Theorem~3.3]{KanagawaBarpGrettonMackey2025} with $q=1$, $q_m=0$, and $m\equiv\Id_d$.  For the pseudo-Lipschitz integrated probability metric (IPM) $d_{\cF_1}$ used there (see its definition in \cite[Equation (4)]{KanagawaBarpGrettonMackey2025}), it gives a constant $A<\infty$ such that, for every admissible $\rho$,
\[
        d_{\cF_1}(\pi,\rho)
        \le A\left(1\vee D(\rho)^{t_0/(1+t_0)}
        \vee D(\rho)^{(4/3)/(1+t_0)}\right)D(\rho)^{1/(1+t_0)},
\]
where
\[
        t_0=\left(1+\frac16\right)d+\frac{1+5}{3}\nu
        =\frac{7d}{6}+2\nu.
\]
Put $a:=(1+t_0)^{-1}$.  Since $\max\{u,v,w\}\le u+v+w$ for nonnegative $u,v,w$,
\[
        d_{\cF_1}(\pi,\rho)
        \le A\{D(\rho)^a+D(\rho)+D(\rho)^{7a/3}\}.
\]
The comparison between $d_{\cF_1}$ and $W_1$ in \cite[Equation (16)]{KanagawaBarpGrettonMackey2025} gives constants $c_1,c_2>0$, depending only on $d$, such that
\[
        \min\{c_1W_1(\rho,\pi),c_2W_1(\rho,\pi)^3\}
        \le d_{\cF_1}(\pi,\rho).
\]
Consequently,
\[
        W_1(\rho,\pi)
        \le C\{d_{\cF_1}(\pi,\rho)^{1/3}+d_{\cF_1}(\pi,\rho)\}.
\]
Combining the two displays and using subadditivity of $x\mapsto x^{1/3}$ gives
\[
        W_1(\rho,\pi)
        \le C\{D^{a/3}+D^{1/3}+D^{7a/9}+D^a+D+D^{7a/3}\},
\]
where $D=D(\rho)$.  Since $\nu>2$ and $d\ge1$, we have $a<6/37$.  Hence each exponent above lies in the interval $[a/3,1]$.  With $\gamma=a/3$, the elementary bound $D^p\le D^\gamma+D$ for $p\in[\gamma,1]$ and $D\ge0$ proves
\[
        W_1(\rho,\pi)\le C_2\{D(\rho)^\gamma+D(\rho)\}.
\]
\end{proof}

Lemmas~\ref{lem:time-averaged-ksd} and \ref{lem:ksd-to-w1} give the following estimate.

\begin{lemma}[Time-averaged convergence to $\pi$ in $W_1$]
\label{lem:time-averaged-w1-to-pi}
Let Assumption~\ref{ass:w1-normalized-matern} hold and let $\gamma$ be as in Lemma~\ref{lem:ksd-to-w1}.  There is a constant $C_3<\infty$ such that, for all $N\ge1$ and all $T>0$,
\[
        \E W_1(\overline\mu_T^N,\pi)
        \le C_3\left(T^{-\gamma/2}+N^{-\gamma/2}\right),
        \qquad
        W_1(\overline\mu_T,\pi)\le C_3T^{-\gamma/2}.
\]
\end{lemma}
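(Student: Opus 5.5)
The plan is to compose Lemma~\ref{lem:ksd-to-w1} with Lemma~\ref{lem:time-averaged-ksd}, treating the mean-field bound and the particle bound separately.

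\textbf{Mean-field bound.} This is deterministic. By Lemma~\ref{lem:ksd-to-w1} applied to $\rho=\overline\mu_T$,
\[
W_1(\overline\mu_T,\pi)\le C_2\{D(\overline\mu_T)^\gamma+D(\overline\mu_T)\}.
\]
This needs $\overline\mu_T\in\cP_1$, which holds because the finite-time moment bounds of Lu--Lu--Nolen make $\mu_t$ uniformly second-moment bounded on $[0,T]$, and so the time average is as well. Lemma~\ref{lem:time-averaged-ksd} gives $D(\overline\mu_T)\le C_1T^{-1/2}$, hence
\[
W_1(\overline\mu_T,\pi)\le C\bigl(T^{-\gamma/2}+T^{-1/2}\bigr).
\]
For $T\ge1$ we have $T^{-1/2}\le T^{-\gamma/2}$, since $\gamma<1$.

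For $T<1$ the KSD bound is useless as stated. There I would instead use
\[
W_1(\overline\mu_T,\pi)\le \frac1T\int_0^T W_1(\mu_t,\pi)\,dt ,
\]
which follows from convexity of $W_1$ along mixtures. The right side is bounded by $\sup_{t\le1}\int\|x\|\,d\mu_t+\int\|x\|\,d\pi\le C$, and $C\le CT^{-\gamma/2}$ when $T<1$. Alternatively, the constant $C_3$ absorbs the small-$T$ regime.

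\textbf{Particle bound.} Apply Lemma~\ref{lem:ksd-to-w1} pathwise to $\rho=\overline\mu_T^N$, which has finite first moment almost surely since the particle trajectories are continuous. Taking expectations and using Jensen's inequality for the concave map $x\mapsto x^\gamma$,
\[
\E W_1(\overline\mu_T^N,\pi)\le C_2\bigl\{(\E D(\overline\mu_T^N))^\gamma+\E D(\overline\mu_T^N)\bigr\}.
\]
Lemma~\ref{lem:time-averaged-ksd} then gives
\[
(\E D)^\gamma\le C\bigl(T^{-1/2}+N^{-1/2}\bigr)^\gamma\le C\bigl(T^{-\gamma/2}+N^{-\gamma/2}\bigr),
\]
by subadditivity of $x^\gamma$. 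The linear term $\E D\le C(T^{-1/2}+N^{-1/2})$ is dominated by the same quantity whenever $T,N\ge1$. The remaining small-$T$ case is again handled by convexity and moment bounds. Specifically, $\E\int\|x\|\,d\mu_t^N$ for $t\le1$ is controlled by the particle ODE growth, which is linear in $\|x\|$ since the kernel is bounded and $\|\nabla V\|\le L(1+\|x\|)$. Gronwall's inequality together with the bound on $\E(\int\|x\|^2d\mu^N_0)^{1/2}$ derived from the initialization assumption then completes this case.

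\textbf{Main obstacle.} The main obstacle is only the bookkeeping in the small-$T$ regime and the justification that $D(\rho)$ is finite and that $\rho$ is admissible for the Kanagawa--Barp--Gretton--Mackey comparison. I expect admissibility to follow from the finite first moment. The large-$T$ case is a direct composition of the two earlier lemmas.
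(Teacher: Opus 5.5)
Your proof is correct and takes the paper's route: the paper's entire argument is the composition of Lemma~\ref{lem:ksd-to-w1} with Lemma~\ref{lem:time-averaged-ksd}, with Jensen's inequality for the concave map $x\mapsto x^\gamma$ implicit in the particle case. Your separate treatment of $T<1$ is a real addition. There you use convexity of $W_1$ under time averaging together with finite-time first-moment bounds (Lu--Lu--Nolen for $\mu_t$, and Gronwall on $\int\|x\|\,d\mu^N_t$ with the initialization bound for $\mu^N_t$). This is needed because the bare composition leaves a term $C\,T^{-1/2}$, which is not $\lesssim T^{-\gamma/2}$ as $T\to 0$. The paper's one-line justification skips this point, harmlessly, since the lemma is only applied for $T\ge T_N\to\infty$.
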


\subsection{Uniform-in-averaging-horizon propagation-of-chaos in $W_1$}

\begin{theorem}[Uniform-in-$T$ POC for the time-averaged flow in $W_1$]
\label{prop:uniform-time-averaged-w1}
Let Assumption~\ref{ass:w1-normalized-matern} hold.  Recall the constant $c_0$ from Lemma~\ref{lem:finite-time-w1}. Let $$\gamma := (3(1+7d/6+2\nu))^{-1}.$$ Then, there exists a finite constant $C>0$ such that, for any $\delta>0$ and all sufficiently large $N$,
\begin{equation}\label{eq:main-bound-W1}
\sup_{T>0}
\E\,\left[W_1\bigl(\overline\mu_T^N,\overline\mu_T\bigr)\, \mathbf{1}_{\{\int \|x\|^2 \mu^N_0(dx) \le R\}}\right]
\le C N^{-\beta}\exp\{c_0(1+R)(\log N)^{c_0\delta}\} + C(\delta\log\log N)^{-\gamma/2}.
\end{equation}
Consequently,
\begin{equation}\label{W1prob}
    \lim_{M \to \infty} \, \limsup_{N \to \infty}\,\sup_{T>0}\mathbb{P}\left[W_1\bigl(\overline\mu_T^N,\overline\mu_T\bigr) \ge M (\log \log N)^{-\gamma/2}\right] = 0.
\end{equation}
Moreover, suppose that the empirical initial second moments satisfy the uniform sub-Gaussian bound
\begin{equation}\label{Gaussexp}
R_N:=\int_{\mathbb R^d}|x|^2\,\mu_0^N(dx),
\qquad
\sup_{N\ge1}\mathbb E\exp\{\lambda R_N^2\}<\infty
\end{equation}
for some \(\lambda>0\). Then there exists \(C<\infty\) such that, for all sufficiently large \(N\),
\begin{equation}\label{W1ebd}
\sup_{T>0}
\mathbb E\,W_1(\overline\mu_T^N,\overline\mu_T)
\le
C(\log\log N)^{-\gamma/2}.
\end{equation}
\end{theorem}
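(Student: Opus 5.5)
The proof follows the cutoff argument of Theorem~\ref{prop:uniform-time-avg-poc}, with $W_1$ in place of $\KSD_\pi$ and Lemmas~\ref{lem:finite-time-w1} and \ref{lem:time-averaged-w1-to-pi} as the two inputs. Fix $\delta>0$ and set $T_N:=\delta\log\log N$, so that $\exp(c_0T_N)=(\log N)^{c_0\delta}$.

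\emph{Short horizons, $T\le T_N$.} Since $W_1$ is jointly convex under mixtures, the time-averaged pair is controlled by the path-wise error:
\[
W_1(\overline\mu_T^N,\overline\mu_T)\le \frac1T\int_0^T W_1(\muNt,\mut)\,dt\le \sup_{0\le t\le T_N}W_1(\muNt,\mut).
\]
To see the convexity, average the optimal couplings over $t$; this is a measurable selection, or one can instead use the Kantorovich--Rubinstein dual sup over $1$-Lipschitz functions, which is subadditive under averaging. Multiplying by the indicator of $\{\int\|x\|^2\mu_0^N(dx)\le R\}$, taking expectations, and applying \eqref{W1stab} at $t=T_N$ gives the first term $CN^{-\beta}\exp\{c_0(1+R)(\log N)^{c_0\delta}\}$.

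\emph{Long horizons, $T\ge T_N$.} By the triangle inequality,
\[
W_1(\overline\mu_T^N,\overline\mu_T)\le W_1(\overline\mu_T^N,\pi)+W_1(\overline\mu_T,\pi).
\]
Lemma~\ref{lem:time-averaged-w1-to-pi} then bounds the expectation by $C(T^{-\gamma/2}+N^{-\gamma/2})\le C(\delta\log\log N)^{-\gamma/2}$ for large $N$. Taking the maximum of the two regimes proves \eqref{eq:main-bound-W1}.

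\emph{The probability bound \eqref{W1prob}.} First, $\sup_N\E(\int\|x\|^2d\mu_0^N)^{1/2}<\infty$ follows from $\E W_2(\mu_0^N,\mu_0)\le C_{\rm init}N^{-\beta}$ and the finite second moment of $\mu_0$. Split on the event $\{\int\|x\|^2\mu_0^N\le R\}$: apply Markov's inequality to \eqref{eq:main-bound-W1} on this event and Markov's inequality on the complement, which costs $CR^{-1/2}$. This yields
\[
\mathbb P[\cdots]\le CM^{-1}(\delta\log\log N)^{\gamma/2}\Big[N^{-\beta}e^{c_0(1+R)(\log N)^{c_0\delta}}+(\delta\log\log N)^{-\gamma/2}\Big]+CR^{-1/2}.
\]
With $c_0\delta<1$, the factor $N^{-\beta}e^{c_0(1+R)(\log N)^{c_0\delta}}$ is $N^{-\beta+o(1)}$, so the first product vanishes as $N\to\infty$. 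Sending $M\to\infty$ and then $R\to\infty$ completes the argument.

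\emph{The expectation bound \eqref{W1ebd}.} Replace the indicator with the pathwise bound \eqref{w1path} at $R=R_N$ and set $a_N=c(\log N)^{c\delta}$. By Hölder's inequality,
\[
\E[e^{a_NR_N}D_N]\le (\E e^{2a_NR_N})^{1/2}(\E D_N^2)^{1/4}(\E D_N)^{1/4},
\]
where $D_N=W_2(\mu_0^N,\mu_0)$ and the first two factors come from splitting $(\E e^{2a_NR_N}D_N^2)^{1/2}$ once more by Cauchy--Schwarz. The Young-inequality bound $\E e^{2a_NR_N}\le e^{a_N^2/\lambda}\E e^{\lambda R_N^2}$ controls the first factor. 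The bound $D_N^2\le 2R_N+2\int|x|^2d\mu_0$ makes the second factor $O(1)$. Finally $\E D_N\le C_{\rm init}N^{-\beta}$ gives the third. Altogether this gives $Ce^{a_N^2/\lambda}N^{-\beta/4}$.

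The main obstacle is this last step. One must check that $e^{a_N^2/\lambda}=\exp\{c^2(\log N)^{2c\delta}/\lambda\}$ is $N^{o(1)}$, which requires choosing $2c\delta<1$. Under that choice the short-time contribution is $N^{-\beta/4+o(1)}$, negligible against the long-time term $C(\log\log N)^{-\gamma/2}$, and \eqref{W1ebd} follows.
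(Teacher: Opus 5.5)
Your argument is the paper's proof essentially step for step. It uses the same cutoff $T_N=\delta\log\log N$, Kantorovich--Rubinstein/convexity plus Lemma~\ref{lem:finite-time-w1} for $T\le T_N$, and the triangle inequality through $\pi$ plus Lemma~\ref{lem:time-averaged-w1-to-pi} for $T\ge T_N$. It also uses the same Markov splitting on $\{\int\|x\|^2\mu_0^N(dx)\le R\}$ and the same Young-inequality bookkeeping for \eqref{W1prob} and \eqref{W1ebd}.

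The one step to repair is your H\"older display, $\E[e^{a_NR_N}D_N]\le(\E e^{2a_NR_N})^{1/2}(\E D_N^2)^{1/4}(\E D_N)^{1/4}$. It cannot hold in general: the right side is homogeneous of degree $3/4$ in $D_N$, while the left side has degree $1$. To see it fail, take $R_N$ and $D_N$ constant with $D_N>1$. Use instead the three-factor H\"older inequality
\[
\E\bigl[e^{a_NR_N}D_N^{2/3}D_N^{1/3}\bigr]\le\bigl(\E e^{3a_NR_N}\bigr)^{1/3}\bigl(\E D_N^2\bigr)^{1/3}\bigl(\E D_N\bigr)^{1/3}.
\]
Then bound $\E e^{3a_NR_N}\le e^{9a_N^2/(4\lambda)}\E e^{\lambda R_N^2}$ by Young's inequality. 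This gives $Ce^{a_N^2/\lambda}N^{-\beta/3}$, which is the exponent the paper obtains. There is also an extra factor $e^{a_N}=N^{o(1)}$ coming from \eqref{w1path}, which is harmless. With $2c\delta<1$, your conclusion then goes through unchanged.
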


\begin{proof}
Choose $\delta>0$ so small that $c_0\delta<1$, where $c_0$ is the constant from Lemma~\ref{lem:finite-time-w1}.  Set
\[
        T_N:=\delta\log\log N.
\]
We split the proof into short and long averaging horizons.

\medskip
\noindent\textbf{Short averaging horizons: $0<T\le T_N$.}
By Kantorovich--Rubinstein duality,
\[
\begin{aligned}
        W_1(\overline\mu_T^N,\overline\mu_T)
        &=\sup_{\Lip(f)\le1}\int f\,\, d(\overline\mu^N_T-\overline\mu_T) \\
        &=\sup_{\Lip(f)\le1}\frac1T\int_0^T
          \int f\,\, d(\muNt-\mut)\,\, d t
        \le \frac1T\int_0^T W_1(\muNt,\mut)\,\, d t.
\end{aligned}
\]
Taking expectations and using Lemma~\ref{lem:finite-time-w1},
\[
        \mathbb E\Big[\sup_{0\le T\le T_N}
        W_1(\overline\mu^N_T,\overline\mu_T)\, \mathbf{1}_{\{\int \|x\|^2 \mu^N_0(dx) \le R\}}\Big]
        \le C_0N^{-\beta}\exp\{c_0(1+R)\exp(c_0T_N)\}.
\]
\medskip
\noindent\textbf{Long averaging horizons: $T\ge T_N$.}
By the triangle inequality,
\[
        W_1(\overline\mu_T^N,\overline\mu_T)
        \le W_1(\overline\mu_T^N,\pi)+W_1(\overline\mu_T,\pi).
\]
Taking expectations and applying Lemma~\ref{lem:time-averaged-w1-to-pi}, with $\gamma$ as in Lemma~\ref{lem:ksd-to-w1},
\[
        \E W_1(\overline\mu_T^N,\overline\mu_T)
        \le C\left(T^{-\gamma/2}+N^{-\gamma/2}\right),
\]
which gives
\[
        \sup_{T\ge T_N}\E W_1(\overline\mu_T^N,\overline\mu_T)
        \le C(\delta\log\log N)^{-\gamma/2}
\]
for all sufficiently large $N$.  Combining the two regimes completes the proof of \eqref{eq:main-bound-W1}. 

The proofs of \eqref{W1prob} and \eqref{W1ebd} follow exactly as those of \eqref{KSDprob} and \eqref{KSDebd}.
\end{proof}

The following theorem gives the associated \(k\)-particle propagation-of-chaos estimate for any fixed \(k \ge 1\). It quantifies, uniformly over the averaging horizon, the discrepancy between the time-averaged joint law of \(k\) particles and the corresponding time-averaged product law, under exchangeable initialization.

\begin{theorem}[Time-averaged \(k\)-particle chaos]\label{kpartW1}
Let Assumption~\ref{ass:w1-normalized-matern} hold and suppose the joint initial law of the particles is exchangeable. For fixed \(k\ge1\), define
the random \(k\)-fold empirical occupation measure
\[
        \widehat\mu_T^{N,k}
        :=
        \frac1T\int_0^T (\mu_t^N)^{\odot k}\,dt,
        \qquad
        \overline{\mu^{\otimes k}}_T
        :=
        \frac1T\int_0^T \mu_t^{\otimes k}\,dt,
\]
where
\[
(\mu_t^N)^{\odot k}
:=
\frac{1}{(N)_k}
\sum_{\substack{1\le i_1,\ldots,i_k\le N\\
i_1,\ldots,i_k\ {\rm distinct}}}
\delta_{\bigl(x_{i_1}^N(t),\ldots,x_{i_k}^N(t)\bigr)},
\qquad
(N)_k:=N(N-1)\cdots(N-k+1).
\]
Let \(W_1^{(k)}\) be the Wasserstein distance on \((\mathbb R^d)^k\)
associated with the product cost
\(
        d_k(x,y):=\sum_{\ell=1}^k \|x_\ell-y_\ell\| .
\)
Then, with $\gamma$ as in Theorem~\ref{prop:uniform-time-averaged-w1},
\begin{equation}\label{kpartprob}
\lim_{M \to \infty} \, \limsup_{N \to \infty}\,\sup_{T>0}
\mathbb P\left(
W_1^{(k)}
\left(
\widehat\mu_T^{N,k},
\overline{\mu^{\otimes k}}_T
\right)
>
M(\log \log N)^{-\gamma/2}
\right)=0.
\end{equation}
Moreover, suppose the initial distributions satisfy \eqref{Gaussexp}. 
Define, for $k \ge 1$,
\[
        \bar\mu_T^{N,k}
        :=
        \frac1T\int_0^T
        \mathcal L(x_1^N(t),\ldots,x_k^N(t))\,dt,
        \qquad
        \overline{\mu^{\otimes k}}_T
        :=
        \frac1T\int_0^T \mu_t^{\otimes k}\,dt ,
\]
where $L(x_1^N(t),\ldots,x_k^N(t))$ denotes the joint law of $(x_1^N(t),\ldots,x_k^N(t))$.
Then there is \(C_k<\infty\) such that, for all sufficiently large \(N\),
\begin{equation}\label{kpartW1exp}
        \sup_{T>0}
        W_1^{(k)}
        \left(
        \bar\mu_T^{N,k},
        \overline{\mu^{\otimes k}}_T
        \right)
        \le
        C_k(\log\log N)^{-\gamma/2}.
\end{equation}
\end{theorem}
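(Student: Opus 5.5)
The plan is to reduce the $k$-particle statement to the one-particle estimates of Theorem~\ref{prop:uniform-time-averaged-w1} and its ingredients. The reduction uses two elementary facts: a de-symmetrization step that replaces $(\mu_t^N)^{\odot k}$ by $(\mu_t^N)^{\otimes k}$, and tensorization of $W_1^{(k)}$ under the additive cost $d_k$. For product measures, the product coupling gives
\[
W_1^{(k)}\bigl(\rho^{\otimes k},\sigma^{\otimes k}\bigr)\le k\,W_1(\rho,\sigma).
\]
For the symmetrization error I will couple a uniformly chosen distinct $k$-tuple of indices with an i.i.d.\ uniform $k$-tuple, so that the two tuples agree except on the collision event, which has probability at most $k^2/N$. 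By Cauchy--Schwarz this gives
\[
W_1^{(k)}\bigl((\mu_t^N)^{\odot k},(\mu_t^N)^{\otimes k}\bigr)\le C k^{2}N^{-1/2}\Bigl(\int\|x\|^2\mu_t^N(dx)\Bigr)^{1/2}.
\]
The time-averaged version then follows by joint convexity of $W_1^{(k)}$, since averaging couplings in time yields a coupling of the averages.

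\textbf{Short horizons ($T\le T_N=\delta\log\log N$).} By convexity, $W_1^{(k)}(\widehat\mu_T^{N,k},\overline{\mu^{\otimes k}}_T)$ is at most the time average of
\[
C k^2N^{-1/2}m_2(\mu^N_t)^{1/2}+k\,W_1(\mu_t^N,\mu_t),
\]
where $m_2$ denotes the second moment. On the event $\{\int\|x\|^2\mu_0^N(dx)\le R\}$, I bound the second term by \eqref{w1path}, exactly as in Theorem~\ref{prop:uniform-time-averaged-w1}. The first term is controlled by the finite-time moment growth $C\exp(ct)$ of Lu--Lu--Nolen. This gives a contribution of at most
\[
CN^{-\beta}\exp\{c_0(1+R)(\log N)^{c_0\delta}\}+CN^{-1/2}(\log N)^{C\delta},
\]
which vanishes once $c_0\delta<1$.

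\textbf{Long horizons ($T\ge T_N$).} Here I cannot use Lemma~\ref{lem:time-averaged-w1-to-pi} directly, because it controls $W_1(\overline\mu_T^N,\pi)$, whereas $\overline{\mu^{\otimes k}}_T$ is not the $k$-fold product of a time average. Instead I compare both sides to $\pi^{\otimes k}$ at each time and then average:
\[
W_1^{(k)}\Bigl(\tfrac1T\!\int_0^T(\mu^N_t)^{\otimes k}dt,\pi^{\otimes k}\Bigr)\le \frac kT\int_0^T W_1(\mu_t^N,\pi)\,dt\le \frac{kC_2}T\int_0^T\bigl(D(\mu^N_t)^\gamma+D(\mu^N_t)\bigr)dt,
\]
where the last step applies Lemma~\ref{lem:ksd-to-w1} at each fixed time. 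By Jensen's inequality the right-hand side is at most
\[
C\Bigl(\tfrac1T\int_0^T D(\mu^N_t)^2dt\Bigr)^{\gamma/2}+C\Bigl(\tfrac1T\int_0^T D(\mu^N_t)^2dt\Bigr)^{1/2}.
\]
The joint entropy-dissipation argument of \cite{BBG2025} underlying Lemma~\ref{ass:particle-to-pi} actually bounds the averaged squared KSD:
\[
\E\,\frac1T\int_0^T\KSD^2(\mu_t^N\|\pi)\,dt\le C\bigl(T^{-1}+N^{-1}\bigr).
\]
The mean-field side has the analogous bound $\KL(\mu_0\|\pi)/T$ from the proof of Lemma~\ref{ass:mean-field-to-pi}. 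The same reasoning applies to the mean-field product measure. It remains to handle the symmetrization term for all $T$, which requires a bound on $\E\,m_2(\mu_t^N)$ that is uniform in $t$.

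\textbf{Uniform second-moment bound (main obstacle).} I expect this to be the delicate step. I would try the following route. Since $\frac1N\KL(P_t^N\|\pi^{\otimes N})$ is nonincreasing along SVGD (joint entropy dissipation), it stays bounded by its initial value. Subadditivity of relative entropy then bounds the average marginal entropy. The dissipativity condition \eqref{eq:dissipativity-assumption} gives $\pi$ Gaussian tails, and combining this with the Donsker--Varadhan variational formula yields
\[
\E\, m_2(\mu_t^N)\le C\Bigl(1+\tfrac1N\KL(P_0^N\|\pi^{\otimes N})\Bigr),
\]
uniformly in $t$.

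\textbf{Conclusion.} Putting the pieces together, for $T\ge T_N$ the bound is $C_k\bigl(T_N^{-\gamma/2}+N^{-\gamma/2}\bigr)$, in expectation and without any indicator. The probability statement \eqref{kpartprob} then follows exactly as \eqref{KSDprob} did, via Markov's inequality together with a truncation in $R$. For \eqref{kpartW1exp}, exchangeability gives
\[
\mathcal L(x_1^N(t),\ldots,x_k^N(t))=\E\,(\mu_t^N)^{\odot k},
\]
so $\bar\mu_T^{N,k}=\E\,\widehat\mu_T^{N,k}$. Convexity of $W_1^{(k)}$ then yields
\[
W_1^{(k)}\bigl(\bar\mu_T^{N,k},\overline{\mu^{\otimes k}}_T\bigr)\le\E\, W_1^{(k)}\bigl(\widehat\mu_T^{N,k},\overline{\mu^{\otimes k}}_T\bigr).
\]
The short-horizon expectation is handled under \eqref{Gaussexp} by the same Young/H\"older argument used for \eqref{KSDebd}.
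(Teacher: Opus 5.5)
Your short-horizon bound via \eqref{w1path} is sound and follows the paper. So is your long-horizon comparison through $\pi^{\otimes k}$ at each fixed time, with Jensen on the time-averaged squared KSD; that is exactly how the paper's asserted time-integrated $W_1$ bounds arise. The exchangeability-plus-convexity step for \eqref{kpartW1exp} is also the paper's. The gap is the step you flagged: your claim that $\frac1N\KL(P_t^N\|\pi^{\otimes N})$ is nonincreasing is false. The joint entropy computation of \cite{BBG2025} gives
\[
\frac{d}{dt}\KL(P_t^N\|\pi^{\otimes N})=-N\,\E\,\KSD^2(\mu_t^N\|\pi)+\E\Bigl[\frac1N\sum_{i=1}^N C^*\bigl(x_i^N(t)\bigr)\Bigr],
\]
with $C^*$ from \eqref{eq:Cstardef}. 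The last term comes from the self-interaction $j=i$ and is typically positive. For example, take $N=1$ and the kernel $k_+$, for which $\nabla_2k_+(z,z)=0$. The particle then follows the gradient flow $\dot x=-k_+(x,x)\nabla V(x)$. For $V(x)=\|x\|^2/2$ its law contracts onto the origin, so $\KL(P^1_t\|\pi)\to\infty$.

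What the identity actually yields is $\frac1N\KL(P_t^N\|\pi^{\otimes N})\le\frac1N\KL(P_0^N\|\pi^{\otimes N})+C^*t/N$. Through subadditivity and Donsker--Varadhan, your route therefore gives only $\E\,m_2(\mu_t^N)\lesssim 1+t/N$. Your Cauchy--Schwarz symmetrization term is then of order $N^{-1/2}(1+\sqrt{T/N})$, which is unbounded over $T>0$. Nothing else in the $W_1$ setting supplies a time-uniform second moment: for the bounded kernel $k_+$, the only moment control available is the finite-time $Ce^{ct}$ growth of \cite{LuLuNolen2019}.

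The missing idea is that second moments are not needed at all. Bound the collision event directly instead of by Cauchy--Schwarz. Build the distinct tuple sequentially from the i.i.d.\ one and condition on the earlier indices; this gives a first-moment estimate
\[
W_1^{(k)}\bigl((\mu_t^N)^{\odot k},(\mu_t^N)^{\otimes k}\bigr)\le\frac{C_k}{N}\int\|x\|\,\mu_t^N(dx).
\]
For $T\ge T_N$, since $x\mapsto\|x\|$ is $1$-Lipschitz, you have $\int\|x\|\,\mu_t^N(dx)\le\int\|x\|\,\pi(dx)+W_1(\mu_t^N,\pi)$. The time average of $\E\,W_1(\mu_t^N,\pi)$ is exactly the quantity you already bounded by $C(T^{-\gamma/2}+N^{-\gamma/2})$. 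Hence the symmetrization term is $O(C_k/N)$ uniformly in $T$, and this is how the paper closes the long-horizon regime. With this replacement, the rest of your argument goes through as written.
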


\begin{proof}
Throughout the proof \(C_k\) denotes a finite constant depending on \(k\)
and on the constants in Assumption~\ref{ass:w1-normalized-matern}, whose
value may change from line to line. 
We use the following standard consequence of exchangeability. Since the
initial law is exchangeable and the particle dynamics are permutation
equivariant, the law of the particle system is exchangeable for every
\(t\ge0\). Hence, for every deterministic \(\nu\in\mathcal P_1(\mathbb R^d)\),
\begin{equation}\label{3.7}
W_1^{(k)}
\Big(
(\mu_t^N)^{\odot k},\nu^{\otimes k}
\Big)
\le
k\,W_1(\mu_t^N,\nu)
+
\frac{k^2(k-1)}{N}\,
\int_{\mathbb{R}^d} \|x\|\,\mu_t^N(dx).
\end{equation}
Indeed, the $k$-fold product of the optimal coupling between $\mu^N_t$ and $\nu$ gives
\[
W_1^{(k)}\big((\mu_t^N)^{\otimes k},\nu^{\otimes k}\big)
\le k W_1(\mu_t^N,\nu),
\]
while comparing sampling with and without replacement gives
\[
W_1^{(k)}\big((\mu_t^N)^{\odot k},(\mu_t^N)^{\otimes k}\big)
\le
\frac{k^2(k-1)}{N}\int\|x\|\,\mu_t^N(dx).
\]
These observations give~\eqref{3.7}. Then, by convexity of \(W_1^{(k)}\),
\begin{equation}
W_1^{(k)}
\left(
\widehat\mu_T^{N,k},\overline{\mu^{\otimes k}}_T
\right)
\le
\frac1T\int_0^T
W_1^{(k)}
\Big(
(\mu_t^N)^{\odot k},\mu_t^{\otimes k}
\Big)\,dt .
\label{3.8}
\end{equation}
We now argue exactly as in the proof of Theorem~\ref{prop:uniform-time-averaged-w1}.
Let \(T_N=\delta\log\log N\), with \(\delta>0\) small.

If \(0<T\le T_N\), then \eqref{3.7} (with $\nu = \mu_t$), \eqref{3.8}, Lemma~\ref{lem:finite-time-w1},
and the finite-time moment bound~\cite[Theorem~2.4]{LuLuNolen2019} give constants $C_k,c$ such that for any $R>0$,
\begin{multline*}
\mathbb{E}\left[\sup_{0 \le T \le T_N}\,W_1^{(k)}
\left(
\widehat\mu_T^{N,k},\overline{\mu^{\otimes k}}_T
\right)\, \mathbf{1}_{\{\int \|x\|^2 \mu^N_0(dx) \le R\}}\right]\\
 \le
C_k N^{-\beta}\exp\{c(1+R)\exp(cT_N)\}
+
C_kN^{-1}\exp(cT_N).
\end{multline*}

If \(T\ge T_N\), use the triangle inequality through \(\pi^{\otimes k}\).
By~\eqref{3.7}, now with \(\nu=\pi\), \eqref{3.8}, and by the $k$-fold product of the optimal coupling for
\(\mu_t\) and \(\pi\),
\begin{align}\label{3.9}
\mathbb{E}\left[W_1^{(k)}
\left(
\widehat\mu_T^{N,k},\overline{\mu^{\otimes k}}_T
\right)\right] & \le \frac1T\int_0^T
\mathbb{E} \,W_1^{(k)}
\Big(
(\mu_t^N)^{\odot k},\mu_t^{\otimes k}
\Big)\,dt\notag\\
&\le \frac1T\int_0^T
\mathbb{E}\,W_1^{(k)}
\Big(
(\mu_t^N)^{\odot k},\pi^{\otimes k}
\Big)\,dt + \frac1T\int_0^T
\mathbb{E}\,W_1^{(k)}
\Big(\mu_t^{\otimes k},\pi^{\otimes k}
\Big)\,dt\notag\\
&\le
\frac{k}{T}\int_0^T
\mathbb E W_1(\mu_t^N,\pi)\,dt
+
\frac{k}{T}\int_0^T
W_1(\mu_t,\pi)\,dt  \notag\\
&\qquad
+
\frac{k^2(k-1)}{NT}\int_0^T
\mathbb E\int \|x\|\,\mu_t^N(dx)\,dt .
\end{align}
The same entropy-dissipation/KSD-to-\(W_1\) argument used in
Lemmas~\ref{lem:ksd-to-w1}--\ref{lem:time-averaged-w1-to-pi}
gives the time-integrated estimates
\[
\frac1T\int_0^T
\mathbb E W_1(\mu_t^N,\pi)\,dt
\le
C\bigl(T^{-\gamma/2}+N^{-\gamma/2}\bigr),
\qquad
\frac1T\int_0^T
W_1(\mu_t,\pi)\,dt
\le
CT^{-\gamma/2}.
\]
The remaining first-moment term in~\eqref{3.9} is uniformly bounded by the observation \(\int\|x\|\,\mu_t^N(dx) \le \int \|x\|\,\pi(dx) + W_1(\mu^N_t,\pi)\) in conjunction with the above averaged \(W_1\)
bound. Therefore, for \(T\ge T_N\),
\begin{align}\label{intk}
\mathbb{E}\left[W_1^{(k)}
\left(
\widehat\mu_T^{N,k},\overline{\mu^{\otimes k}}_T
\right)\right]
&\le \frac1T\int_0^T
\mathbb{E} \,W_1^{(k)}
\Big(
(\mu_t^N)^{\odot k},\mu_t^{\otimes k}
\Big)\,dt\notag\\
&\le C_k\bigl(T^{-\gamma/2}+N^{-\gamma/2}+N^{-1}\bigr)
\le
C_k(\log\log N)^{-\gamma/2}.
\end{align}
Combining the two regimes proves
\begin{multline*}
        \sup_{T>0}
        \mathbb{E}\left[W_1^{(k)}
\left(
\widehat\mu_T^{N,k},\overline{\mu^{\otimes k}}_T
\right)\, \mathbf{1}_{\{\int \|x\|^2 \mu^N_0(dx) \le R\}}\right]\\
        \le C_k N^{-\beta}\exp\{c(1+R)\exp(cT_N)\}
+
C_kN^{-1}\exp(cT_N) +
        C_k(\log\log N)^{-\gamma/2}.
\end{multline*}
\eqref{kpartprob} now follows from the above estimate exactly as \eqref{KSDprob} followed from \eqref{eq:main-bound-b}.

To obtain \eqref{kpartW1exp}, observe that, by exchangeability, \(\mathcal L(x_1^N(t),\ldots,x_k^N(t))
=\E[(\mu_t^N)^{\odot k}]\), and hence \(\bar\mu_T^{N,k} = \E[\widehat\mu_T^{N,k}]\) for $t,T \ge 0$.
Hence, by convexity of \(W_1^{(k)}\),
\[
W_1^{(k)}
        \left(
        \bar\mu_T^{N,k},
        \overline{\mu^{\otimes k}}_T
        \right) \le \mathbb E\, W_1^{(k)}
\left(
\widehat\mu_T^{N,k},\overline{\mu^{\otimes k}}_T
\right).
\]
By \eqref{3.8}, \eqref{3.7}, \eqref{w1path} and~\cite[Theorem~2.4]{LuLuNolen2019},
\begin{align*}
\sup_{0 \le T \le T_N}\,W_1^{(k)}
\left(
\widehat\mu_T^{N,k},\overline{\mu^{\otimes k}}_T
\right) &\le \sup_{0 \le T \le T_N}\,W_1^{(k)}
\left(
(\mu_T^N)^{\odot k},\mu^{\otimes k}_T
\right)\\
&\le C\exp\{c(1 + R_N)\exp(ct)\}\,W_2(\mu_0^N,\mu_0) + C_k N^{-1} \exp(cT_N)\,R_N^{1/2},
\end{align*}
where recall $R_N:=\int_{\mathbb R^d}|x|^2\,\mu_0^N(dx)$. Now, proceeding exactly as in the proof of \eqref{KSDebd}, using the sub-Gaussian assumption on $R_N$ and choosing $\delta$ sufficiently small, we obtain
\begin{equation}\label{ann1}
\sup_{0 \le T \le T_N}\,W_1^{(k)}
\left(
\bar\mu_T^{N,k},\overline{\mu^{\otimes k}}_T
\right) \le \mathbb E\left[\sup_{0 \le T \le T_N}\,W_1^{(k)}
\left(
\widehat\mu_T^{N,k},\overline{\mu^{\otimes k}}_T
\right)\right] = o\left((\log \log N)^{-\gamma/2}\right).
\end{equation}
For the long-time estimate, using convexity of \(W_1^{(k)}\) again,
\begin{align*}
W_1^{(k)}
        \left(
        \bar\mu_T^{N,k},
        \overline{\mu^{\otimes k}}_T
        \right) &\le \frac{1}{T}\int_0^T W_1^{(k)}
\Big(\mathcal L(x_1^N(t),\ldots,x_k^N(t)),\mu_t^{\otimes k}\Big) \, dt\\
&\le
\frac{1}{T}\int_0^T\E W_1^{(k)}\big((\mu_t^N)^{\odot k},\mu_t^{\otimes k}\big)\, dt.
\end{align*}
Hence, by \eqref{intk}, for $T \ge T_N$,
\begin{equation}\label{baro}
\mathbb E\left[
W_1^{(k)}\left(
\bar\mu_T^{N,k},\overline{\mu^{\otimes k}}_T
\right)\right] \le C_k(\log\log N)^{-\gamma/2}.
\end{equation}
\eqref{kpartW1exp} follows from \eqref{ann1} and \eqref{baro}.
\end{proof}


\section{Results in Wasserstein-2 distance}
\label{sec:wasserstein-poc}

We now obtain a Wasserstein--2 version of Theorem~\ref{prop:uniform-time-avg-poc}. Although the basic idea still relies on a cutoff argument separately treating moderate and large times, the unbounded kernels considered here, which are crucial for KSD-to-\(W_2\) control \cite{KanagawaBarpGrettonMackey2025}, present several challenges. These kernels have a \emph{non-normalized bilinear component} in addition to a Mat\'ern part (see \eqref{eq:bilinear-matern-kernel}). However, somewhat surprisingly, this unbounded bilinear part produces additional stability which results in linear growth bounds of the associated second moments along the SVGD flow (see Lemma~\ref{lem:BBG-lem5}). This, in conjunction with a novel Dobrushin type stability estimate for unbounded kernels obtained in Lemma~\ref{lem:finite-time-w2-stability}, and Wasserstein--2 control obtained in \cite{BBG2025} for the large-time regime (recorded in Lemma~\ref{lem:large-time-w2-to-pi}), leads to a uniform-in-time propagation-of-chaos estimate with a \emph{better rate}; see Theorem~\ref{prop:uniform-T-W2}.

We also remark here that the additional bilinear term in the kernel leads to exponential dependence of the stability estimate in Lemma~\ref{lem:finite-time-w2-stability} on the second moment process. Consequently, Theorem~\ref{prop:uniform-T-W2} is a uniform-in-time propagation-of-chaos result in probability compared to the statements for KSD and $W_1$ in expectation given in Theorems~\ref{prop:uniform-time-avg-poc} and \ref{prop:uniform-time-averaged-w1}.


\begin{assumption}\label{ass:Wass}
\begin{enumerate}[label=(\roman*),leftmargin=*]

\item \textbf{Kernel.} The SVGD kernel is of the bilinear-plus-Mat\'ern
form
\begin{equation}\label{eq:bilinear-matern-kernel}
        \widetilde k(u,v)
        = 1+\langle u,v\rangle + \Psi(u-v),
\end{equation}
where $\Psi = \Psi_{\rm mk}$ is the Mat\'ern component appearing in
\cite[Equation~(12)]{BBG2025}. More precisely, for a strictly positive definite matrix $\Sigma$ and a
smoothness parameter $\nu>0$, set
\[
    \Psi_{\rm mk}(z)
    :=
    \frac{2^{1-(d/2+\nu)}}{\Gamma(d/2+\nu)}
    \|\Sigma z\|^{\nu}_{2}
    K_{-\nu}\bigl(\|\Sigma z\|_{2}\bigr),
    \qquad z\in\R^d,
\]
where $K_{-\nu}$ denotes the modified Bessel function of the second kind.
We also assume
$
    \nu>3/2.
$
This, in particular, implies that $\Psi$ is thrice continuously differentiable and $\Psi$ along with all its derivatives up to order $3$ are uniformly bounded on $\mathbb{R}^d$.

\item \textbf{Potential. }We assume $V$ is positive, thrice continuously differentiable and satisfies~\eqref{eq:V-growth-hessian}  for some $L < \infty$.
In particular, $\sup_x \Delta V(x)<\infty$.    
Moreover, for some constants $a>0$ and
$\beta_0,\beta_1 \ge 0$, the dissipativity assumption~\eqref{eq:dissipativity-assumption} holds.
Finally, assume the Langevin
contractivity condition used in \cite[Assumption~4]{BBG2025}\footnote{Note that the latter holds for instance if the potential is strongly convex outside of a ball, see \cite[Remark 5]{BBG2025}.}, so that
Kanagawa--Barp--Gretton--Mackey \cite[Theorem~3.5]{KanagawaBarpGrettonMackey2025} applies to
the kernel \eqref{eq:bilinear-matern-kernel}.  

\item \textbf{Initialization. }
Let \(P_0^N\) be the joint law of the initial configuration and
\(\mu_0^N=N^{-1}\sum_i\delta_{X_i^N(0)}\). Assume
\[
        \sup_N \frac1N\KL(P_0^N \| \pi^{\otimes N})<\infty,
\]
and there are positive constants $\alpha_{\rm init}, \beta_{\rm init}, c_{\rm init},C_{\rm init}<\infty$ such that
\begin{equation}\label{eq:init-w2-prob}
    \mathbb P\left(W_2(\mu_0^N,\mu_0)>c_{\rm init}N^{-\alpha_{\rm init}}\right)
    \le C_{\rm init}\,(\log N)^{-\beta_{\rm init}},
    \qquad N\ge1,
\end{equation}
where \(\mu_0\in\mathcal P_2(\mathbb R^d)\) is a deterministic
probability measure with density \(p_0\) satisfying
\[
        \int_{\mathbb R^d}\|x\|^2\,\mu_0(dx)<\infty,
        \qquad
        \KL(\mu_0 \| \pi)<\infty .
\]
\end{enumerate}
\end{assumption}

\subsection{Key estimates}
 Consider the continuous-time SVGD mean-field equation~\eqref{eq:mf-svgd}
where the velocity field is
\begin{equation}\label{eq:velocity}
    v_\mu(x)
    :=\int_{\R^d}\{-\widetilde k(x,y)\nabla V(y)+\nabla_2\widetilde k(x,y)\}\,\mu(dy).
\end{equation}

A crucial technical input in this section is a strengthening of the uniform time-averaged second-moment estimate of
\cite[Lemma~5]{BBG2025}. This estimate serves as a Lyapunov function and, in particular, also furnishes global well-posedness for the particle and mean-field SVGD equations. Note that such well-posedness results do not follow from existing works as the associated kernel is unbounded. 

\begin{lemma}[Well-posedness and uniform time-averaged second-moment bound]
\label{lem:BBG-lem5}
Let Assumption~\ref{ass:Wass} hold. Then, for every $N\ge2$ and every initial
configuration $(x^N_1(0),\ldots,x^N_N(0))\in(\mathbb R^d)^N$, the
$N$-particle SVGD system~\eqref{eq:svgd-continuous} associated with the kernel \eqref{eq:bilinear-matern-kernel}
has a unique global classical solution. Moreover, there exists
$C\in(0,\infty)$ such that, for all $T>0$,
\[
        \int_0^T \frac1N\sum_{i=1}^N \|x^N_i(t)\|^2\,dt
        \le
        CT+\frac{C}{N}\sum_{i=1}^N V(x^N_i(0)).
\]
Further, under the initialization in Assumption~\ref{ass:Wass},
\begin{equation}\label{eq:entvf}
        \sup_{N\ge2}
        \mathbb E\left[\frac1N\sum_{i=1}^N V(x^N_i(0))\right]
        <\infty .
\end{equation}
The corresponding mean-field equation \eqref{eq:mf-svgd}
is globally defined in the weak continuity-equation sense and satisfies, for all
$T>0$,
\[
        \int_0^T \int_{\mathbb R^d}\|x\|^2\,\mu_t(dx)\,dt
        \le
        CT+ C\int_{\mathbb R^d}V(x)\,\mu_0(dx) < \infty.
\]
\end{lemma}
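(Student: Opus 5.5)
The plan is to use the joint potential energy $\mathcal V_N(t):=\frac1N\sum_i V(x^N_i(t))$ as a Lyapunov function, in the spirit of \cite[Lemma~5]{BBG2025}. Along \eqref{eq:svgd-continuous} with kernel \eqref{eq:bilinear-matern-kernel},
\[
\frac{d}{dt}\mathcal V_N(t)=\frac1N\sum_i\nabla V(x_i)\cdot v_{\mu^N_t}(x_i)
=-\frac1{N^2}\sum_{i,j}\widetilde k(x_i,x_j)\nabla V(x_i)\cdot\nabla V(x_j)+\frac1{N^2}\sum_{i,j}\nabla V(x_i)\cdot\nabla_2\widetilde k(x_i,x_j).
\]
The first term is $-\|\,\frac1N\sum_j\widetilde k(\cdot,x_j)\nabla V(x_j)\|^2_{\cH^d}\le0$ by positive definiteness. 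It splits into three nonnegative pieces: the constant part $-\|\frac1N\sum_j\nabla V(x_j)\|^2$, the bilinear part $-\|\frac1N\sum_j x_j\nabla V(x_j)^\top\|_F^2$, and the Matérn part. For the bilinear kernel we have $\nabla_2\widetilde k(x,y)=x+\nabla_2\Psi$-terms. So the second term equals $\frac1N\sum_i\nabla V(x_i)\cdot x_i\cdot$(stuff) plus bounded-derivative Matérn contributions. Writing $G:=\frac1N\sum_j \nabla V(x_j)x_j^\top$, the bilinear contribution is exactly $\mathrm{Tr}(G)$. The Matérn contribution is bounded by $C\frac1N\sum_i\|\nabla V(x_i)\|\le C(1+\frac1N\sum\|x_i\|)$.

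The key step is to convert the negative term $-\|G\|_F^2$ into coercivity in $\frac1N\sum\|x_i\|^2$. By dissipativity \eqref{eq:dissipativity-assumption}, $\mathrm{Tr}(G)=\frac1N\sum\langle x_i,\nabla V(x_i)\rangle\ge a\,m_2-\beta_1 m_1-\beta_0+d$, with $m_2:=\frac1N\sum\|x_i\|^2$. Since $\|G\|_F\ge \mathrm{Tr}(G)/\sqrt d$, we get
\[
\dot{\mathcal V}_N\le-\tfrac1d(\mathrm{Tr}\,G)_+^2+\mathrm{Tr}\,G+C(1+m_1).
\]
Using $-\frac1d u^2+u\le -\frac{u}{?}+C$ for $u\ge0$, i.e.\ $-\frac1d u^2+u\le -u+Cd$, and then the dissipativity lower bound on $u$, we obtain $\dot{\mathcal V}_N\le -\frac a2 m_2+C$. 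Integrating over $[0,T]$ and using $V>0$ gives the claimed bound $\int_0^T m_2\le CT+C\mathcal V_N(0)$. This is the main obstacle: I must check the signs carefully, since the bilinear part of $\nabla_2\widetilde k$ contributes $+\mathrm{Tr}\,G$ and must be beaten by the quadratic $-\|G\|^2_F$ term. It is, because the latter is quadratic. I must also handle the region where $\mathrm{Tr}\,G$ is small, where $m_2$ is bounded anyway by dissipativity.

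Global well-posedness follows as well. The vector field is locally Lipschitz ($V\in C^3$, $\Psi\in C^3_b$), so a local solution exists. The differential inequality gives $\mathcal V_N(t)\le\mathcal V_N(0)+Ct$, and since $V$ grows quadratically (shown in the proof of Lemma~\ref{lem:finite-time-w1}), positions stay bounded on finite intervals, which rules out blow-up. For \eqref{eq:entvf}, use the Donsker–Varadhan/entropy inequality $\E_{P_0^N}[\frac1N\sum V(x_i)]\le \frac{1}{\lambda N}\bigl(\KL(P_0^N\|\pi^{\otimes N})+N\log\int e^{\lambda V}d\pi\bigr)$, with small $\lambda$. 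The integral $\int e^{\lambda V}d\pi$ is finite since $V$ is quadratic above and below.

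For the mean-field statement, the same computation applies to $\int V\,d\mu_t$, with sums replaced by integrals: $G=\int\nabla V(x)x^\top\mu_t(dx)$ and the same inequality. To make this rigorous, I would construct $\mu_t$ as a limit of empirical measures (or via a truncated-kernel approximation). The a priori bound is uniform, and tightness via the time-integrated second moment bound plus lower semicontinuity passes the estimate to the limit, yielding a global weak solution.
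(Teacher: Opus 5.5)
Your proposal is correct. The core argument is the paper's: the bilinear part of $\widetilde k$ contributes $-\|G\|_F^2\le-\frac1d(\operatorname{Tr}G)^2$, which dominates the $+\operatorname{Tr}G$ coming from $\nabla_2\widetilde k$, and you then integrate using $V>0$. Your inequality $-\frac1d u_+^2+u\le -u+d$ holds for all real $u$, so no case split is needed. Followed by dissipativity applied to $-u$, it is a slightly cleaner route to the conclusion the paper reaches via $E'\le C_1+C_2M-\frac{a^2}{4d}M^2$. The entropy-variational step for \eqref{eq:entvf} is identical.

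The genuine differences are in the well-posedness parts.

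\textbf{Particle system.} You use the pointwise consequence $\mathcal V_N(t)\le\mathcal V_N(0)+Ct$, together with positivity and quadratic coercivity of $V$, to bound all positions directly. This is shorter than the paper's stopping-time/Gronwall argument \eqref{eq:GE}--\eqref{eq:ne}. It also yields $\sup_{t\le T}m_2(\mu_t^N)\le C(1+\mathcal V_N(0)+T)$, sharper than the exponential bound the paper extracts in the proof of Theorem~\ref{kpartW2}.

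\textbf{Mean-field flow.} The paper builds $\mu_t=(X_t)_\#\mu_0$ along characteristics. It must therefore justify the Lyapunov identity for the unbounded test function $V$ through the cutoff $\chi_LV$ and the localization $\sigma_K$. Your approximation route inherits the bound from classical ODEs by lower semicontinuity and avoids that step. However, identifying the limit as a weak solution needs more than the tightness you mention. The coefficient $B_\rho=\int\nabla V(y)y^\top\rho(dy)$ in $v_\rho$ grows quadratically, so you need $W_2$-convergence of $\mu^N_t$ (uniform integrability of second moments), plus equicontinuity in time. Both follow if you:
\begin{itemize}
\item choose $\mu_0^N\to\mu_0$ in $W_2$;
\item propagate using $\|v_\rho(x)\|\le C(1+m_2(\rho))(1+\|x\|)$ together with your uniform pointwise moment bound;
\item note that $\int V\,d\mu_0^N\to\int V\,d\mu_0$, since $V$ grows at most quadratically.
\end{itemize}
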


\begin{proof}
We first consider the particle system. Local existence and uniqueness
follow from local Lipschitz continuity of the driving vector field. Indeed,
$V\in C^3$ with bounded Hessian, while $\Psi$ and its derivatives up to
the required order are bounded.

It remains to rule out explosion. Let
\[
        M_N(t):=\frac1N\sum_{i=1}^N \|x^N_i(t)\|^2 .
\]
For the bilinear--Matérn kernel,
\[
        \nabla_2\widetilde k(x,y)=x-\nabla\Psi(x-y).
\]
Using $\|\nabla V(y)\|\le L(1+\|y\|)$ and the boundedness of
$\Psi$ and $\nabla\Psi$, we obtain, for every probability measure
$\rho$ with finite second moment,
\[
        \|v_\rho(x)\|
        \le C_{V,\Psi}\int (1+\|x\|\|y\|)(1+\|y\|)\,\rho(dy)+C(1+\|x\|)
        \le C_{V,\Psi}(1+m_2(\rho))(1+\|x\|).
\]
where
\(
        m_2(\rho):=\int \|y\|^2\,\rho(dy),
\)
and $C_{V,\Psi}$ is a finite constant depending only on $V,\Psi$.
Hence, along the particle system,
\[
        \|\dot x^N_i(t)\|
        \le
        C_{V,\Psi}(1+M_N(t))(1+\|x^N_i(t)\|).
\]

Let
\[
        \tau^N_R:=\inf\Big\{t\ge0:\max_{1\le i\le N}\|x^N_i(t)\|\ge R\Big\}.
\]
On $t\le T\wedge\tau^N_R$, using $\dot{x}_i(t) = v_{\muNt}(x_i(t))$ and the above velocity bound,
\[
        \|x^N_i(t)\|
        \le
        \|x^N_i(0)\|
        +
        C_{V,\Psi}\int_0^t (1+M_N(s))(1+\|x^N_i(s)\|)\,ds .
\]
Thus, by Gronwall's inequality,
\begin{equation}\label{eq:GE}
        \sup_{0\le s\le T\wedge\tau^N_R}\|x^N_i(s)\|
        \le
        \Big(\|x^N_i(0)\|
        +
        1\Big)
        \exp\Big\{
        C_{V,\Psi}\int_0^{T\wedge\tau^N_R}(1+M_N(s))\,ds
        \Big\}.
\end{equation}
The Lyapunov calculation of \cite[Lemma~5]{BBG2025}, applied to the
localized solution on $[0,T\wedge\tau^N_R]$, gives
\begin{equation}\label{eq:ne}
        \int_0^{T\wedge\tau^N_R} M_N(s)\,ds
        \le
        CT+\frac{C}{N}\sum_{i=1}^N V(x^N_i(0)),
\end{equation}
with the constant $C$ above independent of $R$. Using this bound in \eqref{eq:GE}, we obtain
\[
\sup_{0\le s\le T\wedge\tau^N_R}\|x^N_i(s)\|
        \le
\Big(\|x^N_i(0)\|
        +
        1\Big)
        \exp\Big\{C_{V,\Psi}T + 
        C_{V,\Psi}\left( CT+\frac{C}{N}\sum_{i=1}^N V(x^N_i(0))\right)
        \Big\}.
\]
Choosing $R$
larger than this bound shows that $\tau^N_R>T$. Since $T<\infty$ was
arbitrary, the local solution cannot explode and is therefore global.
The displayed second-moment estimate follows by letting $R\to\infty$ in \eqref{eq:ne}.

Taking expectations and using the entropy variational formula, for
$\delta\in(0,1)$ sufficiently small,
\begin{equation*}
        \mathbb E\left[\frac1N\sum_{i=1}^N V(x^N_i(0))\right]
        \le
        \frac1{\delta N}
        \KL(P_0^N\|\pi^{\otimes N})
        +
        \frac1\delta
        \log\int_{\mathbb R^d} e^{\delta V(x)}\,\pi(dx),
\end{equation*}
which is uniformly bounded under Assumption~\ref{ass:Wass}.

The mean-field argument is very similar, but now carried out along characteristics. Standard local well-posedness for
the characteristic equation gives a local weak solution. Let
$X_t(z)$ denote the local characteristic starting from $z$, so that
$\mu_t=(X_t)_\#\mu_0$ as long as the solution is defined. Set
\[
        M(t):=\int_{\mathbb R^d}\|x\|^2\,\mu_t(dx).
\]
The same velocity bound gives
\[
        \|v_{\mu_t}(x)\|
        \le
        C(1+M(t))(1+\|x\|).
\]
Set
\[
        \tau^{(z)}_R:=\inf\{t\ge0:\|X_t(z)\|\ge R\}.
\]
Gronwall's inequality combined with the above velocity bound similarly yields
\begin{equation}\label{eq:charmf}
\sup_{0\le s\le T\wedge\tau^{(z)}_R}\|X_s(z)\|
        \le
\Big(\|z\|
        +
        1\Big)
        \exp\Big\{C_{V,\Psi}\int_{0}^{T \wedge \tau^{(z)}_R}\left( 1 +M(s)\right) \,ds
        \Big\}.
\end{equation}
The same Lyapunov calculation may now be
performed at the level of the mean-field equation. Let
\[
    E(t):=\int_{\R^d} V(x)\,\mu_t(dx).
\]
We claim that
\begin{equation}\label{eq:diffVjust}
E(t) - E(0)= \int_0^t\int_{\mathbb{R}^d} \nabla V(x)\cdot v_{\mu_s}(x) \, \mu_s(dx)\,ds, \quad t \in [0,T \wedge \sigma_K].
\end{equation}
To show this, we use the weak-formulation of the continuity equation, and provide a justification sketch of the use of the unbounded test function \(V\) by localization. Fix
\(T>0\) and \(K<\infty\), and set
\[
\sigma_K:=\inf\{t\ge0: M(t)\ge K\}.
\]
Let \(\chi_L\in C_c^\infty(\mathbb R^d)\) be a standard cutoff with
\(\chi_L=1\) on the Euclidean ball \(B_L\) of radius $L$ in $\mathbb{R}^d$, \(\chi_L=0\) outside \(B_{2L}\), and
\(\|\nabla\chi_L\|_{\infty}\le C/L\), and put \(V_L=\chi_L V\). Since \(V_L\in C_c^1(\mathbb R^d)\), it is admissible in the weak-formulation. On \([0,T\wedge\sigma_K]\), \(M(t)\le K\), and the growth bounds on \((V,\nabla V)\) and \(v_{\mu_t}\) imply that all cutoff-error terms are bounded by
\[
\int_0^{T\wedge\sigma_K} \int_{\|x\| \ge L}C_K(1+|x|^2)\,\mu_s(dx)\,ds,
\]
for some constant $C_K \in (0,\infty)$. Letting \(L\to\infty\) therefore
justifies the following computation with \(V\) in place of \(V_L\), for all
\(t \in [0,T \wedge \sigma_K]\).

Using the form of $v_{\mu_t}$ in~\eqref{eq:mf-svgd} and differentiating
\(E(t)\) using \eqref{eq:diffVjust}, one obtains for a.e. $t \in [0,T \wedge \sigma_K)$,
\begin{align}\label{eq:lb}
E'(t)
&=
-\left\|\int_{\R^d}\nabla V(x)\,\mu_t(dx)\right\|^2 
-\sum_{\ell,\ell'=1}^d
\left(
    \int_{\R^d} x_\ell\,\partial_{\ell'}V(x)\,\mu_t(dx)
\right)^2 \nonumber\\
&\quad +\int_{\R^d}\langle x,\nabla V(x)\rangle\,\mu_t(dx)
-\iint_{\R^d\times\R^d}
    \langle \nabla V(x),\nabla\Psi(x-y)\rangle
    \,\mu_t(dx)\mu_t(dy) \nonumber\\
&\quad
-\iint_{\R^d\times\R^d}
    \Psi(x-y)\langle \nabla V(x),\nabla V(y)\rangle
    \,\mu_t(dx)\mu_t(dy).
\end{align}

The first term is nonpositive, and the final term is nonpositive by the positive definiteness of \(\Psi\). Also, note that
\[
\sum_{\ell,\ell'=1}^d
\left(
    \int x_\ell\,\partial_{\ell'}V(x)\,\mu_t(dx)
\right)^2
\ge
\frac1d
\left(
    \int \langle x,\nabla V(x)\rangle\,\mu_t(dx)
\right)^2 .
\]
Moreover, since \(\nabla\Psi\) is bounded and \(\|\nabla V(x)\|\le L(1+\|x\|)\),
\[
        \left|\int_{\R^d}\langle x,\nabla V(x)\rangle\,\mu_t(dx) + 
        \iint \langle \nabla V(x),\nabla\Psi(x-y)\rangle\,
        \mu_t(dx)\mu_t(dy)
        \right|
        \le C(1+M(t)).
\]
Using these observations in \eqref{eq:lb}, we obtain finite positive constants $C_1,C_2$ such that for a.e. $t \in [0,T \wedge \sigma_K)$,
\[
        E'(t) \le C_1 + C_2M(t) - \frac{a^2}{4d}M(t)^2,
\]
where $a>0$ appears in the dissipativity assumption \eqref{eq:dissipativity-assumption}.
Integrating over \([0,T \wedge \sigma_K]\) and using \(V\ge0\), we obtain
\begin{equation}\label{secb}
        \int_0^{T \wedge \sigma_K} M(t)\,dt
        \le
        CT+C \int_{\mathbb{R}^d} V(x)\, \mu_0(dx).
\end{equation}
Note that, again using the variational formula for relative entropy: for $\delta \in (0,1)$ sufficiently small,
\begin{equation*}
        \int_{\mathbb{R}^d} V(x)\, \mu_0(dx)
        \le
        \frac1\delta\KL(\mu_0\|\pi)
        +
        \frac1\delta
        \log\int_{\mathbb R^d} e^{\delta V(x)}\,\pi(dx),
\end{equation*}
and recalling from Assumption~\ref{ass:Wass} that $\KL(\mu_0 \| \pi)<\infty$, we obtain \(\int_{\mathbb{R}^d} V(x)\, \mu_0(dx)<\infty\). As the bound in \eqref{secb} does not depend on $K$, this gives the stated uniform time-averaged second-moment estimate for $\mu_t$ on taking $K \to \infty$.
Using this bound in~\eqref{eq:charmf} gives
\[
\sup_{0\le s\le T\wedge\tau^{(z)}_R}\|X_s(z)\|
        \le
\Big(\|z\|
        +
        1\Big)
        \exp\Big\{C_{V,\Psi}(1+C)T +C_{V,\Psi} C\int_{\mathbb{R}^d} V(x)\, \mu_0(dx))
        \Big\}.
\]
As the bound above does not depend on $R$, global well-posedness follows exactly as in the particle ODE case.
\end{proof}

\begin{remark}\label{rem:geometry}
The estimate in Lemma~\ref{lem:BBG-lem5}
can be viewed as a manifestation of the geometry selected by the
bilinear kernel.  The non-normalized linear component enriches the Stein
RKHS by affine directions, and these directions interact directly with
the quadratic geometry underlying \(W_2\).  Analytically, this appears in
the Lyapunov bound above through the coercive quadratic term involving $\int \langle x,\nabla V(x)\rangle\,\mu_t(dx)$ and its empirical analogue.
Thus the improved second-moment control is not merely a technical
estimate; it reflects the compatibility between the bilinear Stein
geometry and Wasserstein--\(2\) moment structure.
\end{remark}

We now prove the finite-time stability estimate needed for furnishing finite-time propagation-of-chaos rates. The proof extends the Dobrushin estimate in Lu--Lu--Nolen~\cite[Theorem~2.7]{LuLuNolen2019} for solutions to the mean-field SVGD equation to kernels with an unbounded bilinear part. 

\begin{lemma}[Finite-time $W_2$ stability for the bilinear kernel]\label{lem:finite-time-w2-stability}
Under Assumption~\ref{ass:Wass}, let $(\mu_t)_{0\le t\le T}$ and $(\nu_t)_{0\le t\le T}$ be two (weak) solutions of~\eqref{eq:mf-svgd}. 
Also, suppose that for every $T \in (0,\infty)$, there is $M_T<\infty$ such that, writing $ m_2(\rho):=\int \norm{x}^2\,\rho(dx)$,
    \begin{equation}\label{eq:int-moment-bound}
        \int_0^T\{1+m_2(\mu_t)+m_2(\nu_t)\}\,dt\le M_T.
    \end{equation} Then there exists a finite constant $C>0$, depending only on the constants in the bounds of Assumption~\ref{ass:Wass}, such that for any $T>0$,
\begin{equation}\label{eq:w2-stability}
    \sup_{0\le t\le T}W_2(\mu_t,\nu_t)
    \le
    \exp\{C M_T\}\,W_2(\mu_0,\nu_0).
\end{equation}
\end{lemma}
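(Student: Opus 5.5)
I will use a synchronous coupling along characteristics, as in the Dobrushin argument of Lu--Lu--Nolen. Let $\gamma_0$ be an optimal $W_2$ coupling of $(\mu_0,\nu_0)$. Let $X_t,Y_t$ be the characteristic flows of $v_{\mu_t}$ and $v_{\nu_t}$, so that $\mu_t=(X_t)_\#\mu_0$ and $\nu_t=(Y_t)_\#\nu_0$; these exist globally by the argument of Lemma~\ref{lem:BBG-lem5}. Then $(X_t(x),Y_t(y))_{\#}\gamma_0$ is a coupling of $(\mu_t,\nu_t)$. Set
\[
D(t):=\iint\|X_t(x)-Y_t(y)\|^2\,\gamma_0(dx,dy)\ \ge W_2^2(\mu_t,\nu_t).
\]
The goal is to show $D'(t)\le C(1+m_2(\mu_t)+m_2(\nu_t))D(t)$. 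Gr\"onwall together with \eqref{eq:int-moment-bound} then gives $D(t)\le e^{2CM_T}D(0)$, and taking square roots yields \eqref{eq:w2-stability}.

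To compute $D'$, write $B(x,y)=-\widetilde k(x,y)\nabla V(y)+\nabla_2\widetilde k(x,y)$ and split it as
\[
B(x,y)=B_\Psi(x,y)+B_{\rm lin}(x,y),\qquad
B_\Psi=-\Psi(x-y)\nabla V(y)-\nabla\Psi(x-y),\qquad
B_{\rm lin}=-(1+\langle x,y\rangle)\nabla V(y)+x .
\]
The Mat\'ern part is handled exactly as in Lemma~\ref{lem:finite-time-w1}. It is Lipschitz in $x$ with constant $C(1+\|y\|)$ and in $y$ with constant $C(1+\|y\|+\|z\|)$, using bounded $\Psi$, its bounded derivatives up to order 3 (since $\nu>3/2$), and \eqref{eq:V-growth-hessian}. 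The bilinear part is exactly affine in $x$:
\[
\int B_{\rm lin}(x,y)\,\rho(dy)=x-a_\rho-G_\rho x,\qquad
a_\rho:=\int\nabla V\,d\rho,\qquad G_\rho:=\int\nabla V(y)\,y^\top\rho(dy).
\]
Hence
\[
v^{\rm lin}_{\mu}(X)-v^{\rm lin}_{\nu}(Y)=(I-G_\mu)(X-Y)-(a_\mu-a_\nu)-(G_\mu-G_\nu)Y .
\]
I bound each piece along the coupling. First, $\|I-G_\mu\|\le C(1+m_2(\mu))$. Second, $\|a_\mu-a_\nu\|\le L\,\E\|X-Y\|\le L D^{1/2}$, because $\nabla V$ is $L$-Lipschitz. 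Third, for the matrix difference,
\[
\|G_\mu-G_\nu\|\le \E\bigl[\|\nabla V(X)-\nabla V(Y)\|\,\|X\|+\|\nabla V(Y)\|\,\|X-Y\|\bigr]
\le C\bigl(1+m_2(\mu)+m_2(\nu)\bigr)^{1/2}D^{1/2}.
\]

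The main obstacle is the cross term $\E\langle X-Y,(G_\mu-G_\nu)Y\rangle$. Cauchy--Schwarz bounds it by $\|G_\mu-G_\nu\|\,D^{1/2}m_2(\nu)^{1/2}$, which gives $C(1+m_2(\mu)+m_2(\nu))D$. This is linear in the moments, which is exactly what the integrated hypothesis \eqref{eq:int-moment-bound} controls. So I will keep every bound linear in $m_2$, never quadratic, since only $\int_0^T m_2$ is assumed finite and not $\sup_t m_2$. The Mat\'ern cross terms need the same care. Their contribution
\[
\E\Bigl\langle X-Y,\ \int\bigl(B_\Psi(X,y)-B_\Psi(Y,y')\bigr)\,\gamma_t(dy,dy')\Bigr\rangle
\]
is bounded by $C\bigl(1+m_2^{1/2}\bigr)D$ via Cauchy--Schwarz, which is again linear.

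Collecting the bounds gives
\[
\tfrac12 D'(t)\le C\bigl(1+m_2(\mu_t)+m_2(\nu_t)\bigr)D(t)
\]
for a.e.\ $t$. I will justify differentiating under the integral using local boundedness of the characteristics from Lemma~\ref{lem:BBG-lem5}, localizing with stopping times if needed. Gr\"onwall then concludes the proof.
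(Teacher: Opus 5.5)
Your proposal is correct and follows essentially the same route as the paper. Both use the synchronous characteristic coupling started from an optimal initial coupling, split the velocity into the affine bilinear part $(I-G_\mu)x-a_\mu$ and the Mat\'ern part, bound $\|G_\mu-G_\nu\|$ and the cross term $\E\langle X-Y,(G_\mu-G_\nu)Y\rangle$ linearly in $m_2$, and close with Gr\"onwall under the integrated moment hypothesis. The differences are cosmetic: you bound the measure-dependence terms directly with the time-$t$ coupling rather than passing through $W_2(\mu_t,\nu_t)\le D_t^{1/2}$, and you add a justification for differentiating $D$ a.e., which the paper leaves implicit.
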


\begin{proof}
In this proof, $\|\cdot\|$ will denote the Frobenius norm of the associated vector of matrix.
Let $(X_0,Y_0)$ be an optimal coupling of $(\mu_0,\nu_0)$, and let $(X_t,Y_t)$ be the characteristic coupling defined by
\[
    \dot X_t=v_{\mu_t}(X_t),
    \qquad
    \dot Y_t=v_{\nu_t}(Y_t).
\]
Set
\[
    D_t:=\E\norm{X_t-Y_t}^2.
\]
Then $W_2^2(\mu_t,\nu_t)\le D_t$, and
\begin{equation}\label{eq:Dprime}
    \frac{d}{dt}D_t
    =2\E\ip{X_t-Y_t}{v_{\mu_t}(X_t)-v_{\nu_t}(Y_t)}.
\end{equation}
We will show that
\begin{equation}\label{eq:main-lip-estimate}
    \frac{d}{dt}D_t
    \le C\{1+m_2(\mu_t)+m_2(\nu_t)\}D_t.
\end{equation}
The conclusion then follows from Gr\"onwall's inequality and \eqref{eq:int-moment-bound}.

Write the velocity as the sum of three terms:
\begin{align*}
    v_\mu(x)
    &=v_\mu^{\mathrm b}(x)+v_\mu^0+v_\mu^\Psi(x),
\end{align*}
where
\begin{align*}
    v_\mu^{\mathrm b}(x)
    &:=x-\int \ip{x}{z}\nabla V(z)\,\mu(dz),\\    v_\mu^0
    &:=-\int \nabla V(z)\,\mu(dz),\\    v_\mu^\Psi(x)
    &:=-\int \Psi(x-z)\nabla V(z)\,\mu(dz)
      -\int \nabla\Psi(x-z)\,\mu(dz).
\end{align*}
For the bilinear part, define
\begin{equation}\label{eq:Bmu}
    B_\mu:=\int \nabla V(z)z^\top\,\mu(dz).
\end{equation}
Then
\begin{equation*}\label{eq:bilinear-linear}
    v_\mu^{\mathrm b}(x)=(I-B_\mu)x.
\end{equation*}
By \eqref{eq:V-growth-hessian},
\begin{equation}\label{eq:Bmu-bound}
    \norm{B_\mu}_{\mathrm{op}}
    \le \int \norm{\nabla V(z)}\norm{z}\,\mu(dz)
    \le C\{1+m_2(\mu)\}.
\end{equation}
Consequently,
\begin{equation}\label{eq:bilinear-spatial-lip}
    \norm{v_\mu^{\mathrm b}(x)-v_\mu^{\mathrm b}(y)}
    \le C\{1+m_2(\mu)\}\norm{x-y}.
\end{equation}
The translation-invariant part satisfies, using boundedness of $\nabla\Psi$ and $\nabla^2\Psi$ together with \eqref{eq:V-growth-hessian},
\begin{align}\label{eq:Psi-spatial-lip}
    \norm{v_\mu^\Psi(x)-v_\mu^\Psi(y)}
    &\le C\left(1+\int \norm{\nabla V(z)}\,\mu(dz)\right)\norm{x-y} \notag\\
    &\le C\{1+m_2(\mu)^{1/2}\}\norm{x-y}.
\end{align}
Combining \eqref{eq:bilinear-spatial-lip} and \eqref{eq:Psi-spatial-lip},
\begin{equation}\label{eq:spatial-lip}
    \norm{v_\mu(x)-v_\mu(y)}
    \le C\{1+m_2(\mu)\}\norm{x-y}.
\end{equation}
This controls the spatial part of \eqref{eq:Dprime}:
\begin{equation}\label{eq:spatial-part}
    2\E\ip{X_t-Y_t}{v_{\mu_t}(X_t)-v_{\mu_t}(Y_t)}
    \le C\{1+m_2(\mu_t)\}D_t.
\end{equation}

It remains to control the measure-dependence term
\[
    v_{\mu_t}(Y_t)-v_{\nu_t}(Y_t).
\]
Let $(Z,W)$ be any coupling of $(\mu,\nu)$.  First, since $\nabla^2V$ is bounded in operator norm,
\begin{equation*}
    \norm{\int\nabla V\,d\mu-\int\nabla V\,d\nu}
    \le L\,\E\norm{Z-W}.
\end{equation*}
Taking the infimum over couplings yields
\begin{equation}\label{eq:gradV-measure}
    \norm{\int\nabla V\,d\mu-\int\nabla V\,d\nu}
    \le L\,W_2(\mu,\nu).
 \end{equation} 
For the bilinear coefficient, define $F(z):=\nabla V(z)z^\top$.  Then
\[
    B_\mu-B_\nu=\E[F(Z)-F(W)].
\]
Observe that \(F\) is matrix-valued and, for \(h\in\mathbb R^d\),
\[
    DF(z)[h]
    =
    \nabla^2 V(z)h\,z^\top+\nabla V(z)h^\top .
\]
Hence, by \eqref{eq:V-growth-hessian},
\[
    \|DF(z)[h]\|
    \le
    \|\nabla^2V(z)\|_{\mathrm{op}}\|h\|\|z\|
    +
    \|\nabla V(z)\|\|h\|
    \le
    C(1+\|z\|)\|h\|.
\]
Taking the supremum over \(\|h\|=1\), we get
\[
    \|DF(z)\|\le C(1+\|z\|).
\]
Consequently, by the mean-value theorem,
\[
    \|F(z)-F(w)\|
    \le
    C(1+\|z\|+\|w\|)\|z-w\|.
\]
In particular, using Cauchy--Schwarz inequality,
\[
\begin{aligned}
\|B_\mu-B_\nu\| 
&=
\|\mathbb E[F(Z)-F(W)]\|  \\
&\le
C\mathbb E\big[(1+\|Z\|+\|W\|)\|Z-W\|\big] \\
&\le
C\bigl(1+m_2(\mu)^{1/2}+m_2(\nu)^{1/2}\bigr)
        \bigl(\mathbb E\|Z-W\|^2\bigr)^{1/2}.
\end{aligned}
\]
Taking the infimum over couplings yields
\[
    \|B_\mu-B_\nu\|_{\mathrm{op}} \le \|B_\mu-B_\nu\|
    \le
    C\bigl(1+m_2(\mu)^{1/2}+m_2(\nu)^{1/2}\bigr)
    W_2(\mu,\nu).
\]
Thus
\begin{equation}\label{eq:measure-bilinear}
    \norm{v_\mu^{\mathrm b}(y)-v_\nu^{\mathrm b}(y)}
    \le C\{1+m_2(\mu)^{1/2}+m_2(\nu)^{1/2}\}\norm{y}\,W_2(\mu,\nu).
\end{equation}
For the $\Psi$-part, define
\[
    G_y(z):=\Psi(y-z)\nabla V(z) + \nabla\Psi(y-z).
\]
Then
\[
    DG_y(z)=-\nabla\Psi(y-z)\otimes\nabla V(z)+\Psi(y-z)\nabla^2V(z) + \nabla^2\Psi(y-z),
\]
and therefore, using boundedness of $\nabla\Psi$ and $\nabla^2\Psi$, and \eqref{eq:V-growth-hessian},
\[
    \norm{DG_y(z)}\le C(1+\norm{z}).
\]
Thus, arguing exactly as how we obtained \eqref{eq:measure-bilinear},
\begin{equation}\label{eq:Psi-measure}
    \norm{v_\mu^\Psi(y)-v_\nu^\Psi(y)}
    \le C\{1+m_2(\mu)^{1/2}+m_2(\nu)^{1/2}\}W_2(\mu,\nu),
\end{equation}
uniformly in $y$.

Apply \eqref{eq:gradV-measure}, \eqref{eq:measure-bilinear}, and \eqref{eq:Psi-measure} with $\mu=\mu_t$ and $\nu=\nu_t$.  Since $W_2(\mu_t,\nu_t)\le D_t^{1/2}$ and $\E\norm{Y_t}^2=m_2(\nu_t)$, Cauchy--Schwarz gives
\begin{align}\label{eq:measure-part}
    &2\E\ip{X_t-Y_t}{v_{\mu_t}(Y_t)-v_{\nu_t}(Y_t)}\le C\{1+m_2(\mu_t)+m_2(\nu_t)\}D_t.
\end{align}
Combining \eqref{eq:spatial-part} and \eqref{eq:measure-part} proves \eqref{eq:main-lip-estimate}.  Hence
\[
    D_t\le D_0\exp\left\{C\int_0^t\{1+m_2(\mu_s)+m_2(\nu_s)\}\,ds\right\}.
\]
Using \eqref{eq:int-moment-bound},
\[
    D_t^{1/2}\le D_0^{1/2}\exp\{CM_T/2\}.
\]
Taking the infimum over initial couplings gives \eqref{eq:w2-stability}.  
\end{proof}

By \cite[Theorem~3.5]{KanagawaBarpGrettonMackey2025}, there exists a constant $C_{\rm KGM}<\infty$ such that, with
$\mathfrak r=\mathfrak r(d,\nu)\in(0,1)$ given by
\begin{equation}\label{eq:rdef}
\mathfrak r(d,\nu) :=
\frac{d}{3(4d+1)}
\cdot
\frac{1}{
\frac{3d}{2}
+
\frac{11}{6}
+
\left(\frac83+\frac1d\right)\nu
},
\end{equation}
for every
probability measure $\rho$ with finite second moment,
\begin{equation}\label{eq:kanagawa-w2-from-ksd}
        W_2(\rho,\pi)
        \le
        C_{\rm KGM}
        \bigl(1\vee \KSD(\rho\|\pi)^{1-\mathfrak r}\bigr)
        \KSD(\rho\|\pi)^{\mathfrak r}.
\end{equation}
Observe that $\mathfrak r(d,\nu) \approx  \frac{1}{18 d}$ for large $d$. 

We will use the following two large-time estimates.  They are the Wasserstein analogues of the
large-time KSD estimates used in Theorem~\ref{prop:uniform-time-avg-poc}.

\begin{lemma}[Time-averaged convergence to $\pi$ in $W_2$]
\label{lem:large-time-w2-to-pi}
Under Assumption~\ref{ass:Wass}, there is a constant $C<\infty$ such that, for all sufficiently large
$N$ and all $T\ge 1$,
\begin{align}
        \E W_2(\overline\mu_T^N,\pi)
        &\le
        C\left(\frac1{\sqrt T}+\frac1{\sqrt N}\right)^{\mathfrak r},
        \label{eq:particle-avg-w2-to-pi} \\
        W_2(\overline\mu_T,\pi)
        &\le
        C T^{-\mathfrak r/2}.
        \label{eq:mean-field-avg-w2-to-pi}
\end{align}
\end{lemma}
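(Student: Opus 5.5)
The plan is to combine the KSD-to-$W_2$ comparison \eqref{eq:kanagawa-w2-from-ksd} with time-averaged KSD bounds for both flows, now for the unbounded kernel $\widetilde k$. First, for the mean field, I will use the entropy dissipation identity $\frac{d}{dt}\KL(\mu_t\|\pi)=-\KSD^2(\mu_t\|\pi)$ for the kernel $\widetilde k$. Its justification along the globally defined weak solution of Lemma~\ref{lem:BBG-lem5} will use the same localization argument as for $E(t)$ there, since the time-integrated second moments are finite. This yields $\frac1T\int_0^T\KSD^2(\mu_t\|\pi)\,dt\le \KL(\mu_0\|\pi)/T$. Convexity of $\rho\mapsto\KSD(\rho\|\pi)=\|S_\pi(\rho)\|_{\cH^d}$ (via linearity of $S_\pi$) and Jensen then give $\KSD(\overline\mu_T\|\pi)\le C T^{-1/2}$. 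One point needs care here: $S_\pi(\overline\mu_T)$ must be well defined for the unbounded kernel. I will check that $\|S_\pi(\rho)\|_{\cH^d}<\infty$ whenever $m_2(\rho)<\infty$, and $m_2(\overline\mu_T)<\infty$ by Lemma~\ref{lem:BBG-lem5}.

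Second, for the particles, I will invoke \cite[Theorem~1]{BBG2025} for the bilinear--plus--Mat\'ern kernel. That result is stated for this kernel class, and the bounded-$C^*$ requirement can be checked from \eqref{eq:Cstardef}. Indeed, $\nabla_2\widetilde k(z,z)=z-\nabla\Psi(0)=z$ and $\Delta_2\widetilde k(z,z)=\Delta\Psi(0)$. Hence
\[
C^*(z)=\langle z,\nabla V(z)\rangle+(1+\|z\|^2+\Psi(0))\Delta V(z)-\Delta\Psi(0).
\]
This is not obviously bounded above, since $\Delta V\le dL$ but $\|z\|^2\Delta V(z)$ can be positive. So I will instead follow the joint-entropy dissipation argument of \cite{BBG2025}, which controls the extra term via the time-averaged second-moment bound of Lemma~\ref{lem:BBG-lem5} together with \eqref{eq:entvf}; the contribution is at most $C(T+\sum_i V(x_i(0))/N)$. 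This should give $\E\,\frac1T\int_0^T\KSD^2(\mu_t^N\|\pi)\,dt\le C(T^{-1}+N^{-1})$, and hence, by convexity and Jensen, $\E\KSD(\overline\mu_T^N\|\pi)\le C(T^{-1/2}+N^{-1/2})$.

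Third, I will apply \eqref{eq:kanagawa-w2-from-ksd} with $\rho=\overline\mu_T^N$ and $\rho=\overline\mu_T$. Both have finite second moments by Lemma~\ref{lem:BBG-lem5}. Using $(1\vee x^{1-\mathfrak r})x^{\mathfrak r}\le x^{\mathfrak r}+x$, the mean-field bound becomes $C(T^{-\mathfrak r/2}+T^{-1/2})\le CT^{-\mathfrak r/2}$ for $T\ge1$, which is \eqref{eq:mean-field-avg-w2-to-pi}. For the particles, the concave map $x\mapsto x^{\mathfrak r}$ and Jensen give $\E\KSD^{\mathfrak r}\le(\E\KSD)^{\mathfrak r}$. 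The linear term is at most $C(T^{-1/2}+N^{-1/2})$, and this is dominated by $C(T^{-1/2}+N^{-1/2})^{\mathfrak r}$ once $T\ge1$ and $N$ is large, since then the base is at most $2$. This yields \eqref{eq:particle-avg-w2-to-pi}.

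I expect the main obstacle to be the particle step: verifying that the BBG2025 entropy-dissipation bound survives with the unbounded bilinear kernel, i.e.\ controlling the $C^*$-type term through the Lyapunov bound rather than a uniform sup. If that fails, my fallback is to absorb the $\|z\|^2\Delta V$ contribution using the linear-in-$T$ moment bound, which still preserves the $1/T$ and $1/N$ scalings.
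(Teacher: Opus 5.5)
Your proposal is correct and follows essentially the same route as the paper. The paper also reruns the BBG2025 joint-entropy calculation with the unbounded $C^*$ term controlled by $|C^*(z)|\le C(1+\|z\|^2)$ and Lemma~\ref{lem:BBG-lem5}, takes the mean-field KSD rate from Lemma~\ref{ass:mean-field-to-pi}, and then applies \eqref{eq:kanagawa-w2-from-ksd} with Jensen's inequality. Your absorption of the linear KSD term via $T^{-1/2}+N^{-1/2}\le 2$ is slightly cleaner than the paper's: the paper instead appeals to $T\ge T_N$, so as written it does not literally cover all $T\ge1$.
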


\begin{proof}
For the particle system, the proof of \cite[Theorem~4]{BBG2025}
shows, under Assumption~3 and the kernel \eqref{eq:bilinear-matern-kernel}, that the same
entropy calculation as in \cite[Theorem~1]{BBG2025} gives
\begin{equation}\label{eq:KSD-large-time-W2-section}
        \E\KSD(\overline\mu_T^N\|\pi)
        \le
        C\left(\frac1{\sqrt T}+\frac1{\sqrt N}\right),
        \qquad T\ge 1.
\end{equation}
To see why the argument of \cite[Theorem 4]{BBG2025} applies in the present
unbounded-kernel setting, recall $C^*(\cdot)$ from \eqref{eq:Cstardef} with the kernel $\widetilde k =1+\langle u,v\rangle+\Psi(u-v)$.
The Mat\'ern
part contributes only bounded diagonal terms, while the bilinear part gives
\(\nabla_2\tilde k(z,z)=z+O(1)\) and
\(\tilde k(z,z)=O(1+\|z\|^2)\). Hence, using the linear growth of
\(\nabla V\) and the boundedness of \(\Delta V\),
\[
        |C^*(z)|\le C(1+\|z\|^2).
\]
Thus the entropy calculation of
\cite[Theorem 4]{BBG2025} gives
\[
 \frac1T\int_0^T
        \mathbb E\,\KSD^2(\mu_t^N\|\pi)\,dt
 \le
 \frac{1}{NT}\KL(P_0^N\|\pi^{\otimes N})
 +
 \frac{C}{NT}\int_0^T
        \mathbb E\left[
        1+\frac1N\sum_{i=1}^N \|x_i(t)\|^2
        \right]dt .
\]
Lemma~\ref{lem:BBG-lem5}, the $\KL$ bound in Assumption~\ref{ass:Wass} and Jensen's inequality now give~\eqref{eq:KSD-large-time-W2-section}.

Now, we apply \eqref{eq:kanagawa-w2-from-ksd} with $\rho=\overline\mu_T^N$.  Since the right side of
\eqref{eq:KSD-large-time-W2-section} is at most one for all sufficiently large $N$ and
$T\ge T_N$, where $T_N \to \infty$ in the cutoff argument below, and since $\mathfrak r\in(0,1)$, Jensen's inequality gives
\[
        \E W_2(\overline\mu_T^N,\pi)
        \le
        C\,\E\Big[\KSD(\overline\mu_T^N\|\pi)^{\mathfrak r}
              +\KSD(\overline\mu_T^N\|\pi)\Big]
        \le
        C\bigl(\E\KSD(\overline\mu_T^N\|\pi)\bigr)^{\mathfrak r}.
\]
Together with \eqref{eq:KSD-large-time-W2-section}, this proves
\eqref{eq:particle-avg-w2-to-pi}.

For the mean-field flow, Lemma~\ref{ass:mean-field-to-pi} gives
\[
        \KSD(\overline\mu_T\|\pi)
        \le C T^{-1/2}.
\]
Applying \eqref{eq:kanagawa-w2-from-ksd} to $\rho=\overline\mu_T$ gives
\eqref{eq:mean-field-avg-w2-to-pi}, after increasing $C$ if necessary.
\end{proof}

\subsection{Uniform-in-time propagation-of-chaos in Wasserstein-2 distance}

\begin{theorem}[Uniform-in-averaging-horizon POC in $W_2$ in probability]
\label{prop:uniform-T-W2}
Let Assumption~\ref{ass:Wass} hold. Recall the exponent $\mathfrak r=\mathfrak r(d,\nu)$ in \eqref{eq:rdef}, and let
$
    \gamma_*:=\frac{\mathfrak r}{4} \wedge \beta_{\rm init},
$
where $\beta_{\rm init}$ is the logarithmic decay rate of the probability associated with the initialization in Assumption~\ref{ass:Wass}. Then there is a $N_0 \in \mathbb{N}$ and $C \in (0,\infty)$ such that,
for all $N\ge N_0$,
\begin{equation}\label{eq:uniform-T-W2-poc-prob}
    \sup_{T>0}\,
    \mathbb P\left(
        W_2(\overline\mu_T^N,\overline\mu_T)
        > (\log N)^{-\mathfrak r / 4}
    \right)
    \le C\,(\log N)^{-\gamma_*}.
\end{equation}
Moreover,
\[
W_2(\overline\mu_T^N,\overline\mu_T)
=
O_{\mathbb P}\bigl((\log N)^{-\mathfrak r/2}\bigr)
\]
uniformly over deterministic averaging horizons \(T>0\), in the sense that
\begin{equation}\label{eq:conp}
\lim_{M\to\infty}\limsup_{N\to\infty}
\sup_{T>0}
\mathbb P\left(
W_2(\overline\mu_T^N,\overline\mu_T)>M(\log N)^{-\mathfrak r/2}
\right)
=0.
\end{equation}
\end{theorem}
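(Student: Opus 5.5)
The plan is the cutoff argument of Theorems~\ref{prop:uniform-time-avg-poc} and \ref{prop:uniform-time-averaged-w1}, with two changes. First, the cutoff is now logarithmic, $T_N:=\delta\log N$, rather than $\delta\log\log N$. This is possible because the stability constant in Lemma~\ref{lem:finite-time-w2-stability} is only $\exp\{CM_T\}$, and Lemma~\ref{lem:BBG-lem5} makes $M_T$ grow linearly in $T$. Second, expectation bounds are replaced by Markov-type probability bounds, since $M_T$ contains the random quantity $\frac1N\sum_iV(x_i^N(0))$, which enters exponentially.

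\textbf{Short horizons, $0<T\le T_N$.} Joint convexity of $W_2^2$ gives
\[
W_2^2(\overline\mu_T^N,\overline\mu_T)\le \frac1T\int_0^TW_2^2(\mu_t^N,\mu_t)\,dt\le \sup_{t\le T}W_2^2(\mu_t^N,\mu_t).
\]
Next, apply Lemma~\ref{lem:finite-time-w2-stability} with $\mu_t=\mu^N_t$, the empirical measure viewed as a weak solution, and $\nu_t=\mu_t$. Lemma~\ref{lem:BBG-lem5} supplies
\[
M_T\le C(1+T)+C\frac1N\sum_iV(x_i^N(0))+C\int V\,d\mu_0,
\]
where the last term is finite. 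Introduce the good event
\[
G_N:=\Big\{\tfrac1N\textstyle\sum_iV(x_i^N(0))\le \varepsilon\log N\Big\}\cap\big\{W_2(\mu_0^N,\mu_0)\le c_{\rm init}N^{-\alpha_{\rm init}}\big\}.
\]
By \eqref{eq:entvf} and Markov's inequality, together with \eqref{eq:init-w2-prob},
\[
\mathbb P(G_N^c)\le \frac{C}{\varepsilon\log N}+C_{\rm init}(\log N)^{-\beta_{\rm init}}.
\]
On $G_N$ we then get
\[
\sup_{T\le T_N}W_2(\overline\mu_T^N,\overline\mu_T)\le C N^{C(\delta+\varepsilon)}N^{-\alpha_{\rm init}}.
\]
Choosing $\delta,\varepsilon$ so small that $C(\delta+\varepsilon)<\alpha_{\rm init}/2$, this is at most $N^{-\alpha_{\rm init}/2}$, which is below $(\log N)^{-\mathfrak r/4}$ for large $N$. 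Hence the short-horizon probability is at most $C(\log N)^{-\min\{1,\beta_{\rm init}\}}$. Since $\mathfrak r<1$, this is at most $C(\log N)^{-\gamma_*}$.

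\textbf{Long horizons, $T\ge T_N$ (so $T\ge1$ for large $N$).} Use the triangle inequality through $\pi$ together with Lemma~\ref{lem:large-time-w2-to-pi}. The mean-field term is deterministic and satisfies
\[
W_2(\overline\mu_T,\pi)\le C(\delta\log N)^{-\mathfrak r/2},
\]
which is less than $\tfrac12(\log N)^{-\mathfrak r/4}$ for $N\ge N_0$. For the particle term, Markov's inequality and \eqref{eq:particle-avg-w2-to-pi} give
\[
\mathbb P\Big(W_2(\overline\mu_T^N,\pi)>\tfrac12(\log N)^{-\mathfrak r/4}\Big)\le C(\log N)^{\mathfrak r/4}\bigl(T_N^{-1/2}+N^{-1/2}\bigr)^{\mathfrak r}\le C(\log N)^{-\mathfrak r/4}.
\]
Combining the two regimes yields \eqref{eq:uniform-T-W2-poc-prob}.

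\textbf{The $O_{\mathbb P}$ statement \eqref{eq:conp}.} Repeat the argument with the threshold $M(\log N)^{-\mathfrak r/2}$. For short horizons, the bound $N^{-\alpha_{\rm init}/2}$ on $G_N$ is eventually below the threshold, and $\mathbb P(G_N^c)\to0$. For long horizons, the deterministic mean-field term is at most $C\delta^{-\mathfrak r/2}(\log N)^{-\mathfrak r/2}$, which is below half the threshold once $M$ is large. Markov's inequality bounds the particle term's probability by $CM^{-1}(\log N)^{\mathfrak r/2}(\delta\log N)^{-\mathfrak r/2}\le C_\delta M^{-1}$. Sending first $N\to\infty$ and then $M\to\infty$ gives \eqref{eq:conp}.

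\textbf{Main obstacle.} The delicate step is the short-horizon regime. One has to check that Lemma~\ref{lem:finite-time-w2-stability} applies pathwise to the empirical flow with the random moment budget $M_T$ from Lemma~\ref{lem:BBG-lem5}. One also has to tune $\delta$ and $\varepsilon$ so that the polynomial loss $N^{C(\delta+\varepsilon)}$ is absorbed by the initial rate $N^{-\alpha_{\rm init}}$. At the same time, the event on which the initial potential energy is large must retain only logarithmically small probability, which is exactly why the result holds in probability rather than in expectation.
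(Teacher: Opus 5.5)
Your proposal is correct and follows essentially the same route as the paper. Both arguments use the logarithmic cutoff $T_N=\delta\log N$, apply Lemma~\ref{lem:finite-time-w2-stability} pathwise with the linear moment budget from Lemma~\ref{lem:BBG-lem5} on a good event for $\frac1N\sum_i V(x_i^N(0))$, and handle $T\ge T_N$ by the triangle inequality through $\pi$ plus Markov's inequality. The differences are minor. You threshold the initial potential energy at $\varepsilon\log N$ rather than the paper's $(\log N)^{\mathfrak r/4}$ (or $M$ for \eqref{eq:conp}); this slightly improves the short-horizon probability without changing the final rate. You also split the long-horizon threshold in half explicitly, which the paper leaves implicit.
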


\begin{proof}
Define
\[
    m_2^N(t):=\frac1N\sum_{i=1}^N \|x_i(t)\|^2,
    \qquad
    \mathcal H_N:= \frac1N\sum_{i=1}^N V(x_i(0)).
\]
By Lemma~\ref{lem:BBG-lem5}, there is a constant $C<\infty$ such that $\int_0^T m_2^N(t)\, dt \le CT + C\mathcal H_N$ for all $T > 0$, $N \ge 2$, and
\[
    \sup_N \E\mathcal H_N\le C .
\]
Let
\[
    R_N:=(\log N)^{\mathfrak r/4},
    \qquad
    \mathcal G_N:=\{\mathcal H_N\le R_N\}.
\]
Then Markov's inequality gives
\begin{equation}\label{eq:GN-complement}
    \mathbb P(\mathcal G_N^c)
    \le C R_N^{-1}
    = C(\log N)^{-\mathfrak r/4}.
\end{equation}
The mean-field part of Lemma~\ref{lem:BBG-lem5} gives a deterministic
constant $C_{\rm mf}<\infty$ such that for all $S>0$,
\begin{equation}\label{eq:mf-avg-moment-for-prop}
    \int_0^S m_2(\mut)\,dt
    \le C_{\rm mf}(1+ S),
    \qquad
    m_2(\rho):=\int \|x\|^2\rho(dx).
\end{equation}
Set
\[
    T_N:=\eta \log N,
\]
where $\eta>0$ is chosen below.

First note that, on $\mathcal G_N$, for any $T\in (0,T_N]$,
\begin{align*}
    \int_0^T \{1+m_2^N(t)+m_2(\mut)\}\,dt
    &\le T\{1+C +C_{\rm mf}\} + CR_N + C_{\rm mf}\\
    &\le T_N\{1+C +C_{\rm mf}\} + CR_N + C_{\rm mf}\\
    &\le C\eta\log N + C(\log N)^{\mathfrak r/4} + C_{\rm mf}.
\end{align*}
Applying Lemma~\ref{lem:finite-time-w2-stability} with one solution
initialized from the empirical initial law and the other from $\mu_0$, and
then using convexity of $W_2^2$ under time averaging, gives on
$\mathcal G_N$,
\begin{equation}\label{eq:short-W2-on-good-event}
    \sup_{0 < T\le T_N}\,W_2(\overline\mu_T^N,\overline\mu_T)
    \le
    N^{C\eta + o(1)} W_2(\mu_0^N,\mu_0).
\end{equation}
Choose $\eta>0$ so small that $C\eta<\alpha_{\rm init}/2$, where $\alpha_{\rm init}$ is the
initial empirical rate in \eqref{eq:init-w2-prob}. Then
\eqref{eq:init-w2-prob}, \eqref{eq:GN-complement}, and
\eqref{eq:short-W2-on-good-event} imply that, for all sufficiently large
$N$,
\begin{equation}\label{eq:short-prob}
    \mathbb P\left(\sup_{0 < T\le T_N}\,
        W_2(\overline\mu_T^N,\overline\mu_T)
        > N^{-\alpha_{\rm init}/2}
    \right)
    \le C(\log N)^{-\gamma_*}.
\end{equation}
It remains to fix $T\ge T_N$. By the triangle inequality,
\[
    W_2(\overline\mu_T^N,\overline\mu_T)
    \le W_2(\overline\mu_T^N,\pi)+W_2(\overline\mu_T,
    \pi).
\]
The deterministic mean-field bound in Lemma~\ref{lem:large-time-w2-to-pi}
gives
\[
    W_2(\overline\mu_T,\pi)
    \le C T^{-\mathfrak r/2}
    \le C T_N^{-\mathfrak r/2}.
\]
For the particle term, Lemma~\ref{lem:large-time-w2-to-pi} and Markov's
inequality yield, for sufficiently large $N$ and every $T \ge T_N$,
\begin{align*}
    \mathbb P\left(W_2(\overline\mu_T^N,\pi)>(\log N)^{-\mathfrak r/4}\right)
   &\le
    \frac{C}{(\log N)^{-\mathfrak r/4}}\left(\frac1{\sqrt T}+\frac1{\sqrt N}\right)^{\mathfrak r}\\
    &\le
   C\eta^{-\mathfrak r/2}(\log N)^{\mathfrak r/4 - \mathfrak r/2} =  C\eta^{-\mathfrak r/2}(\log N)^{-\mathfrak r/4}.
\end{align*}
Thus,
we obtain
\begin{equation}\label{eq:long-prob-fixed-T}
    \sup_{T\ge T_N}\mathbb P\left(
        W_2(\overline\mu_T^N,\overline\mu_T)
        > (\log N)^{-\mathfrak r/4}
    \right)
    \le C\eta^{-\mathfrak r/2}(\log N)^{-\mathfrak r/4}.
\end{equation}
Combining \eqref{eq:short-prob} and
\eqref{eq:long-prob-fixed-T} gives \eqref{eq:uniform-T-W2-poc-prob}.

\eqref{eq:conp} follows similarly upon replacing $R_N$ by $M$ in the definition of the event $\mathcal G_N$, estimating $\sup_{T\ge T_N}\mathbb P\left(
        W_2(\overline\mu_T^N,\overline\mu_T)
        > M(\log N)^{-\mathfrak r/2}
    \right)$ similarly as in \eqref{eq:long-prob-fixed-T}, and taking the limits in the order prescribed in \eqref{eq:conp}.
\end{proof}

\begin{remark}[On the stronger maximal-in-$T$ formulation]
The estimates used above justify the uniform-in-time
probability bound \eqref{eq:uniform-T-W2-poc-prob}. To replace its left-hand
side by
\[
    \mathbb P\left(
        \sup_{T>0}W_2(\overline\mu_T^N,\overline\mu_T)
        > (\log N)^{-\mathfrak r/4}
    \right),
\]
one would additionally need a maximal version of the large-time estimate in
Lemma~\ref{lem:large-time-w2-to-pi}, for example a bound on
\(\mathbb E\sup_{T\ge T_N}W_2(\overline\mu_T^N,\pi)\). The pointwise
expectation estimate in Lemma~\ref{lem:large-time-w2-to-pi} alone does not
imply such a maximal bound. Similar strengthening remarks hold for the uniform-in-time propagation-of-chaos results in KSD and Wasserstein--1 distances.
\end{remark}
The following theorem gives the $k$-particle propagation-of-chaos for fixed $k \ge 1$.
\begin{theorem}[Time-averaged \(k\)-particle chaos in \(W_2\)]\label{kpartW2}
Let Assumption~\ref{ass:Wass} hold and suppose the joint
initial law of the particles is exchangeable. For fixed \(k\ge1\), define
the random \(k\)-fold empirical occupation measure
\[
        \widehat\mu_T^{N,k}
        :=
        \frac1T\int_0^T (\mu_t^N)^{\odot k}\,dt,
        \qquad
        \overline{\mu^{\otimes k}}_T
        :=
        \frac1T\int_0^T \mu_t^{\otimes k}\,dt,
\]
where
\[
(\mu_t^N)^{\odot k}
:=
\frac{1}{(N)_k}
\sum_{\substack{1\le i_1,\ldots,i_k\le N\\
i_1,\ldots,i_k\ {\rm distinct}}}
\delta_{\bigl(x_{i_1}^N(t),\ldots,x_{i_k}^N(t)\bigr)},
\qquad
(N)_k:=N(N-1)\cdots(N-k+1).
\]
Let \(W_2^{(k)}\) be the Wasserstein distance on \((\mathbb R^d)^k\)
associated with the quadratic product cost
\(
        d_k(x,y)^2:=\sum_{\ell=1}^k \|x_\ell-y_\ell\|^2 .
\)
Then, with $\mathfrak r=\mathfrak r(d,\nu)$ as in~\eqref{eq:rdef}, there is \(C_k<\infty\)
such that, for all sufficiently large \(N\),
\[
\sup_{T>0}
\mathbb P\left(
W_2^{(k)}
\left(
\widehat\mu_T^{N,k},
\overline{\mu^{\otimes k}}_T
\right)
>
(\log N)^{-r/4}
\right)
\le
C_k(\log N)^{-\min\{r/4,\beta_{\rm init}\}} .
\]
\end{theorem}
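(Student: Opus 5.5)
The plan is to combine the $W_2$ cutoff argument of Theorem~\ref{prop:uniform-T-W2} with the $k$-particle reduction from the proof of Theorem~\ref{kpartW1}, now carried out with quadratic cost.

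\textbf{Step 1: $W_2^{(k)}$ analogue of \eqref{3.7}.} For any deterministic $\nu\in\mathcal P_2(\mathbb R^d)$, the $k$-fold product of an optimal $W_2$ coupling gives
\[
W_2^{(k)}\bigl((\mu_t^N)^{\otimes k},\nu^{\otimes k}\bigr)\le \sqrt k\,W_2(\mu_t^N,\nu).
\]
For the with/without-replacement comparison, I couple sampling with and without replacement so that the two $k$-tuples differ only on an event of probability at most $k(k-1)/N$, and on that event I pay $d_k^2\le 4\sum_\ell(\|x_\ell\|^2+\|y_\ell\|^2)$. Bounding the resulting sum by the empirical second moment gives
\[
W_2^{(k)}\bigl((\mu_t^N)^{\odot k},(\mu_t^N)^{\otimes k}\bigr)^2\le \frac{C_k}{N}\,m_2^N(t).
\]
For the time-averaged measures I use joint convexity of $(W_2^{(k)})^2$ under mixtures, not of $W_2^{(k)}$ itself:
\[
W_2^{(k)}\bigl(\widehat\mu_T^{N,k},\overline{\mu^{\otimes k}}_T\bigr)^2\le \frac1T\int_0^T W_2^{(k)}\bigl((\mu_t^N)^{\odot k},\mu_t^{\otimes k}\bigr)^2\,dt .
\]

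\textbf{Step 2: short horizons, $T\le T_N=\eta\log N$.} I work on the good event $\mathcal G_N=\{\mathcal H_N\le R_N\}$ with $R_N=(\log N)^{\mathfrak r/4}$. There, Lemma~\ref{lem:finite-time-w2-stability} together with Lemma~\ref{lem:BBG-lem5} gives $\sup_{t\le T_N}W_2(\mu_t^N,\mu_t)\le N^{C\eta+o(1)}W_2(\mu_0^N,\mu_0)$. The replacement term contributes $\frac{C_k}{NT}\int_0^T m_2^N\,dt$.

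This term is delicate for small $T$, because the time average over $[0,T]$ need not be bounded. I control it with the Gr\"onwall bound \eqref{eq:GE}, which gives a pathwise bound $\sup_{t\le T_N}m_2^N(t)\le (1+m_2^N(0))N^{C\eta+o(1)}$ on $\mathcal G_N$. The initial moment $m_2^N(0)$ is tight, since on the event in \eqref{eq:init-w2-prob} we have $m_2^N(0)\le 2m_2(\mu_0)+2$. Hence this contribution is $N^{-1+C\eta+o(1)}$.

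Choosing $\eta$ small and intersecting with the initialization event \eqref{eq:init-w2-prob} gives $\sup_{T\le T_N}W_2^{(k)}\le N^{-\alpha_{\rm init}/2}$ outside a set of probability $C(\log N)^{-\min\{\mathfrak r/4,\beta_{\rm init}\}}$.

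\textbf{Step 3: long horizons, $T\ge T_N$.} I route through $\pi^{\otimes k}$. Using Step~1 with $\nu=\pi$ and the product coupling between $\mu_t$ and $\pi$, I aim for
\[
W_2^{(k)}\bigl(\widehat\mu_T^{N,k},\overline{\mu^{\otimes k}}_T\bigr)\le C_k\Bigl(\tfrac1T\textstyle\int_0^T W_2^2(\mu_t^N,\pi)\,dt\Bigr)^{1/2}+C_k\Bigl(\tfrac1T\int_0^T W_2^2(\mu_t,\pi)\,dt\Bigr)^{1/2}+C_k\Bigl(\tfrac1{NT}\int_0^T m_2^N\,dt\Bigr)^{1/2}.
\]
The last term is handled by Lemma~\ref{lem:BBG-lem5}: it is $O_{\mathbb P}(N^{-1/2})$, uniformly in $T\ge1$, by Markov's inequality on $\mathcal H_N$.

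\textbf{Main obstacle.} The first two terms are where I expect the real difficulty. Lemma~\ref{lem:large-time-w2-to-pi} controls $W_2(\overline\mu_T^N,\pi)$ for the time-averaged measure, but the estimate above needs the time average of $W_2^2(\mu_t^N,\pi)$. My first approach is to repeat the proof of Lemma~\ref{lem:large-time-w2-to-pi} pointwise in $t$, applying \eqref{eq:kanagawa-w2-from-ksd} to each $\mu_t^N$. This gives
\[
W_2(\mu_t^N,\pi)\le C\bigl(1\vee\KSD(\mu_t^N\|\pi)^{1-\mathfrak r}\bigr)\KSD(\mu_t^N\|\pi)^{\mathfrak r}.
\]
If $\KSD(\mu_t^N\|\pi)$ is at most one for most $t$, Jensen's inequality applied to the time-averaged squared KSD bound $\frac1T\int_0^T\E\,\KSD^2\le C(T^{-1}+N^{-1})$ gives $\E\,\frac1T\int_0^TW_2^2\le C(T^{-1/2}+N^{-1/2})^{2\mathfrak r}$. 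The times at which $\KSD>1$ are controlled by Chebyshev's inequality in time plus the linear moment bound.

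If this pointwise route is too lossy, the fallback is to avoid squares altogether. I would instead test $W_2^{(k)}$ directly against the averaged product measures, use that time averaging preserves $W_2$ closeness through the convexity identity, and apply the averaged bounds \eqref{eq:particle-avg-w2-to-pi}--\eqref{eq:mean-field-avg-w2-to-pi} at the cost of the exponent. Either route, combined with Markov's inequality as in \eqref{eq:long-prob-fixed-T}, should yield the claimed $(\log N)^{-\mathfrak r/4}$ threshold with probability $C_k(\log N)^{-\mathfrak r/4}$. Merging this with Step~2 completes the argument.
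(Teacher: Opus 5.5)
Your proposal is correct and follows the paper's proof essentially step for step. The correspondences are: the with/without-replacement bound and convexity of $(W_2^{(k)})^2$ as in \eqref{4.28}--\eqref{4.30}; a $\log N$ cutoff with finite-time $W_2$ stability and Gr\"onwall/Lyapunov control of the replacement term on a good event; and, for $T\ge T_N$, routing through $\pi^{\otimes k}$ as in \eqref{4.31}. Your pointwise use of \eqref{eq:kanagawa-w2-from-ksd}, combined with Jensen's inequality and the time-averaged squared-KSD bound, is exactly the justification the paper leaves implicit for $\E\,\frac1T\int_0^T W_2^2(\mu_t^N,\pi)\,dt\le C(T^{-\mathfrak r}+N^{-\mathfrak r})$.

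The times with $\KSD>1$ need no separate treatment, because \eqref{eq:kanagawa-w2-from-ksd} already gives $W_2^2(\mu_t^N,\pi)\le C\{\KSD^{2\mathfrak r}(\mu_t^N\|\pi)+\KSD^{2}(\mu_t^N\|\pi)\}$. A Chebyshev-in-time argument that instead bounded $W_2^2$ by second moments on those times would not give decay from the linear bound of Lemma~\ref{lem:BBG-lem5} alone. You should also drop the fallback route. $\overline{\mu^{\otimes k}}_T$ is a mixture of products rather than $(\overline\mu_T)^{\otimes k}$, so bounds on $W_2(\overline\mu_T^N,\pi)$ and $W_2(\overline\mu_T,\pi)$ do not control it.
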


\begin{proof}
The only additional ingredient beyond the proof of
Theorem~\ref{prop:uniform-T-W2} is the standard comparison between
sampling with and without replacement. For every deterministic
\(\nu\in\mathcal P_2(\mathbb R^d)\), similarly as in~\eqref{3.7},
\begin{equation}\label{4.28}
W_2^{(k)}
\bigl((\mu_t^N)^{\odot k},\nu^{\otimes k}\bigr)^2
\le
2k W_2(\mu_t^N,\nu)^2
+
\frac{2k^2(k-1)}{N}\int_{\mathbb R^d}\|x\|^2\,\mu_t^N(dx).
\end{equation}
By convexity of \(W_2^2\),
\begin{align}\label{4.29}
W_2^{(k)}
\left(
\widehat\mu_T^{N,k},
\overline{\mu^{\otimes k}}_T
\right)^2
&\le
\frac1T\int_0^T
W_2^{(k)}
\bigl(
(\mu_t^N)^{\odot k},\mu_t^{\otimes k}
\bigr)^2\,dt .
\end{align}
Combining \eqref{4.28} and \eqref{4.29}, with \(\nu=\mu_t\), gives
\begin{align}\label{4.30}
W_2^{(k)}
\left(
\widehat\mu_T^{N,k},
\overline{\mu^{\otimes k}}_T
\right)^2
\le
\frac{2k}{T}\int_0^T W_2(\mu_t^N,\mu_t)^2\,dt
+
\frac{2k^2(k-1)}{NT}\int_0^T
\int_{\mathbb R^d}\|x\|^2\,\mu_t^N(dx)\,dt .
\end{align}
We now repeat the cutoff argument from
Theorem~\ref{prop:uniform-T-W2}. Let
\(
        T_N:=\delta\log N,
\)
with \(\delta>0\) chosen sufficiently small. For \(0<T\le T_N\), the
finite-time \(W_2\) stability estimate used in the proof of
Theorem~\ref{prop:uniform-T-W2}, together with the initial
probability bound in Assumption~\ref{ass:Wass}, gives
\begin{equation}\label{pck1}
\mathbb P\left(
\frac1T\int_0^T W_2(\mu_t^N,\mu_t)^2\,dt
>
(\log N)^{-r/2}
\right)
\le
C(\log N)^{-\beta_{\rm init}}
\end{equation}
for all sufficiently large \(N\). 

It remains to check that the second term in~\eqref{4.30} is negligible
in the short-time regime.  Write
\[
        M_N(t):=\int \|x\|^2\,\mu_t^N(dx)
        =\frac1N\sum_{i=1}^N\|x_i^N(t)\|^2,
        \qquad
        A_N:=\frac1N\sum_{i=1}^N V(x_i^N(0)).
\]
Combining the Gronwall estimate~\eqref{eq:GE} with the localized
Lyapunov bound~\eqref{eq:ne}, and then letting \(R\to\infty\), gives,
for every fixed \(T<\infty\),
\[
        \sup_{0\le s\le T}M_N(s)
        \le
        C(1+A_N)\exp\{CT+CA_N\}.
\]
Here we also used the quadratic coercivity of \(V\), which follows from
the dissipativity assumption, to bound
\(N^{-1}\sum_i\|X_i^N(0)\|^2\) by \(C(1+A_N)\). Define the event $\mathcal{A}_N := \{A_N \le \delta\log N\}$. By~\eqref{eq:entvf}, 
\begin{equation}\label{miss1}
    \mathbb{P}\left(\mathcal{A}_N^c\right) \le C(\delta\log N)^{-1}.
\end{equation}
On $\mathcal{A}_N$, for
\(T\le T_N=\delta\log N\),
\begin{equation}\label{pck2}
\frac{1}{NT}\int_0^T M_N(s)\,ds
\le
\frac1N\sup_{0\le s\le T_N}M_N(s) = O(N^{-1 + 3C\delta}).
\end{equation}
Choosing \(\delta>0\) sufficiently small makes this term negligible
relative to the logarithmic threshold used in the short-time estimate.

For \(T\ge T_N\), use the triangle inequality through \(\pi^{\otimes k}\)
and \eqref{4.28}, now with \(\nu=\pi\), to obtain
\begin{align}\label{4.31}
W_2^{(k)}
\left(
\widehat\mu_T^{N,k},
\overline{\mu^{\otimes k}}_T
\right)^2
&\le
\frac{4k}{T}\int_0^T W_2(\mu_t^N,\pi)^2\,dt
+
\frac{4k}{T}\int_0^T W_2(\mu_t,\pi)^2\,dt \notag \\
&\quad+
\frac{4k^2(k-1)}{NT}\int_0^T
\int_{\mathbb R^d}\|x\|^2\,\mu_t^N(dx)\,dt .
\end{align}
The same entropy-dissipation/KSD-to-\(W_2\) argument used in
Lemma~\ref{lem:large-time-w2-to-pi}, using the proof of~\cite[Theorem~4]{BBG2025} and \eqref{eq:kanagawa-w2-from-ksd}, gives
\[
\mathbb E\left[
\frac1T\int_0^T W_2(\mu_t^N,\pi)^2\,dt
\right]
\le
C\left(T^{-\mathfrak r}+N^{-\mathfrak r}\right),
\qquad
\frac1T\int_0^T W_2(\mu_t,\pi)^2\,dt
\le
CT^{-\mathfrak r}.
\]
The remaining second-moment term is controlled by
Lemma~\ref{lem:BBG-lem5}. 

Therefore, there exists a constant $C_k \in (0,\infty)$ such that for
\(T\ge T_N\),
\begin{equation}\label{pck3}
\mathbb P\left(W_2^{(k)}
\left(
\widehat\mu_T^{N,k},
\overline{\mu^{\otimes k}}_T
\right) >
(\log N)^{-r/4}
\right)
\le C_k(\log N)^{-r/4}.
\end{equation}
In particular, by Markov's inequality, \eqref{pck1}, \eqref{miss1}, \eqref{pck2} and \eqref{pck3}, and the same
probability bookkeeping as in Theorem~\ref{prop:uniform-T-W2},
\[
\sup_{T>0}
\mathbb P\left(
W_2^{(k)}
\left(
\widehat\mu_T^{N,k},
\overline{\mu^{\otimes k}}_T
\right)
>
(\log N)^{-r/4}
\right)
\le
C_k(\log N)^{-\min\{r/4,\beta_{\rm init}\}} .
\]
This proves the theorem.
\end{proof}

\section{Matrix-kernel SVGD, Conjugacy and Polynomial rates}\label{sec:conjgauss}

The preceding sections show that, for broad distributional metrics, uniform comparisons between the finite-particle and mean-field SVGD dynamics are possible after combining short-time propagation of chaos with long-time convergence-to-equilibrium estimates.  This cutoff mechanism yields uniform-in-averaging-horizon bounds in KSD, \(W_1\), and \(W_2\), but with logarithmic or iterated-logarithmic rates in \(N\).  By contrast, in the Gaussian case with bilinear kernels, the SVGD dynamics close on finitely many moments and this closed system can be controlled uniformly in physical time with parametric, polynomially decaying rates~\cite{liu2023towards}.  It is therefore natural to ask whether these polynomial, uniform-in-physical-time estimates are special to the Gaussian setting, or whether they can be established for a broader class of non-Gaussian targets by modifying the target--kernel pair in a compatible way. Towards answering this, in Section~\ref{sec:mvSVGD}, we first consider a natural generalization of the SVGD flow to matrix-valued kernels~\cite{wang2019stein}. This turns out to be useful in establishing propagation-of-chaos for more general targets.

In Section~\ref{sec:conj}, we illustrate a \emph{conjugacy phenomenon}, which gives a systematic way of constructing a family of target-kernel pairs from a reference (base) pair via deterministic orientation-preserving diffeomorphisms, such that each constructed pair also satisfies an SVGD flow with an explicitly transformed velocity and conjugate Stein features. Consequently, establishing polynomial rate uniform-in-time propagation-of-chaos estimates in the forthcoming feature metric \eqref{eq:feature-metric} for any pair in this family boils down to analyzing just the base pair in its intrinsic feature metric. See Corollary~\ref{cor:conjugacy-poc-transfer} for the precise statement.

In Section~\ref{sec:Gaus}, we analyze Gaussian targets with constant and bilinear kernels driving the SVGD flow.  The constant
kernel gives a first-moment feature and a simple warm-up uniform POC estimate.
The Gaussian-bilinear pair is the main example: its Stein features are exactly
the first and second moments, so the SVGD dynamics reduces to a closed
finite-dimensional moment flow.  We prove a direct uniform stability estimate
for this flow and obtain an $\mathbb E[\sup_{t\ge0}]$ propagation-of-chaos
bound with rate $N^{-1}$ in squared feature distance, allowing non-Gaussian initial laws.  In the Gaussian-initialization case, the same argument,
combined with an affine particle representation of the SVGD flow and standard empirical
Gaussian moment-matching estimates, gives a stronger version of the uniform-in-time
Wasserstein POC theorem of Liu--Ghosal--Balasubramanian--Pillai~\cite{liu2023towards}.

Finally, in Section~\ref{sec:pullback}, we return to the conjugacy framework and analyze diffeomorphism-based pullbacks of a base pair comprising a Gaussian target and bilinear (or constant) kernel.  This produces target-adapted
matrix-valued kernels for Gaussian-conjugate targets, which includes a large family of non-Gaussian and multi-modal densities. The resulting Stein
features are simply the first moment feature $A(x)$, for the pulled-back
constant kernel, and the first and second moment features of $A(x)$, for the
pulled-back bilinear kernel.  The uniform POC estimates from Section~\ref{sec:Gaus} therefore
transfer directly to these non-Gaussian targets in the corresponding intrinsic
feature metrics.  The section also clarifies why matrix-valued kernels are
natural here: in dimension $d\ge2$, nonlinear changes of variables do not
preserve the scalar-kernel form.

\subsection{Matrix-kernel SVGD and Stein features}\label{sec:mvSVGD}
Let
\[
        \pi(dx)=Z^{-1}e^{-V(x)}\,dx
\]
be a probability measure on $\R^d$.  A matrix-valued kernel is a map
\[
        K:\R^d\times\R^d\to\R^{d\times d}
\]
such that $K(x,y)=K(y,x)^\top$ and
\[
        \sum_{i,j=1}^n a_i^\top K(x_i,x_j)a_j\ge0,
        \qquad a_i\in\R^d.
\]
For a probability measure $\rho$, the matrix-SVGD velocity is
\begin{equation}\label{eq:matrix-velocity}
        v_\rho(x)
        :=
        \int_{\R^d}
        \left\{
        -K(x,y)\nabla V(y)+\diver_y K(x,y)
        \right\}\rho(dy),
\end{equation}
where $\diver_yK(x,y)\in\R^d$ has $a$th coordinate
\[
        [\diver_yK(x,y)]_a
        :=
        \sum_{b=1}^d\partial_{y_b}K_{ab}(x,y).
\]
The mean-field SVGD equation is given by~\eqref{eq:mf-svgd} with the above velocity and $\mu_{t=0}=\mu_0$.

The $N$-particle system is
\begin{equation}\label{eq:particle-svgd}
        \dot X_i^N(t)=v_{\mu_t^N}(X_i^N(t)),
        \qquad
        \mu_t^N:=\frac1N\sum_{i=1}^N\delta_{X_i^N(t)} .
\end{equation}


We will use finite-rank matrix kernels of the form
\begin{equation}\label{eq:finite-rank-K}
        K(x,y)=B(x)B(y)^\top,
        \qquad B(x)\in\R^{d\times m}.
\end{equation}

Writing $b_j(x)$ for the $j$th column of $B(x)$, define the Stein features
\begin{equation}\label{eq:matrix-features}
        f_j(x):=\diver b_j(x)-b_j(x)\cdot\nabla V(x),
        \qquad j=1,\ldots,m,
\end{equation}
and set $F=(f_1,\ldots,f_m)$.  Then
\begin{equation}\label{eq:velocity-finite-rank}
        v_\rho(x)=B(x)\rho F,
        \qquad
        \rho F:=\int F(x)\rho(dx)\in\R^m.
\end{equation}
Thus, the measure-dependence of both the particle system and the mean-field equation arises through the finite feature vector $\rho F$.

For such a feature vector we use the restricted metric
\begin{equation}\label{eq:feature-metric}
        d_F(\rho,\nu):=\norm{\rho F-\nu F}_2 .
\end{equation}
Equivalently, \(d_F\) is the integral probability pseudometric generated by the finite-dimensional class
\[
    \mathcal G_F
    :=
    \bigl\{x\mapsto a^\top F(x):a\in\mathbb R^m,\ \|a\|_2\le1\bigr\},
    \qquad
    d_F(\rho,\nu)
    =
    \sup_{g\in\mathcal G_F}\bigl|\rho g-\nu g\bigr|.
\]

For a canonical finite-rank factorization with the standard feature-space inner product, this is the two-sample KSD associated with the chosen finite-rank kernel. In a nonminimal factorization, it should be viewed as the chosen coefficient-space Stein-feature integral probability (semi)-metric. It is nevertheless substantially weaker than \(W_1\), which tests all \(1\)-Lipschitz functions, or a KSD generated by a rich infinite-dimensional kernel.  In particular, \(F\) need not be measure determining, so distinct measures may satisfy \(d_F(\rho,\nu)=0\).  The advantage is that \(d_F\) gives an interpretable and dynamically tractable comparison of selected statistics.  For example, \(F(x)=x\) compares means, while \(F(x)=(-x,\operatorname{vec}(I_d-xx^\top))\) compares first and second moments.  For the Gaussian-conjugate constructions considered below, the corresponding features compare the latent moments \(\rho A\) and \(\rho(AA^\top)\), for some statistic $A$.  Thus, \(d_F\) sacrifices full distributional discrimination in exchange for sharp control of a prescribed finite collection of informative observables.

\textbf{Constructing features from statistics.}
For practical use, the construction above can be reversed by prescribing the statistics to be controlled and then solving a Stein equation for the kernel features.  Specifically, let \(h_1,\ldots,h_m\) be observables of interest and set \(f_j:=h_j-\pi h_j\).  If one can find sufficiently regular vector fields \(b_j:\mathbb R^d\to\mathbb R^d\) satisfying
\[
    \diver b_j-b_j\cdot\nabla V=f_j,
    \qquad\text{equivalently}\qquad
    \diver\!\left(e^{-V}b_j\right)=e^{-V}f_j,
\]
then, with \(B=(b_1,\ldots,b_m)\) and \(K(x,y)=B(x)B(y)^\top\), the resulting Stein features are exactly the prescribed centered statistics \(f_j\), and hence
\[
    d_F(\rho,\nu)
    =
    \left(\sum_{j=1}^m\{\rho h_j-\nu h_j\}^2\right)^{1/2}.
\]
A convenient choice is \(b_j=\nabla u_j\), where \(u_j\) solves the weighted Poisson equation
\[
    \Delta u_j-\nabla V\cdot\nabla u_j=h_j-\pi h_j.
\]
In one dimension the construction is explicit: writing \(q(x)=Z^{-1}e^{-V(x)}\),
\[
    b_j(x)
    =
    \frac{1}{q(x)}
    \int_{-\infty}^x
    \bigl(h_j(y)-\pi h_j\bigr)q(y)\,dy,
\]
provided the above integral is well-defined, and the possibly unknown normalizing constant cancels from this expression.  For example, consider the smooth bimodal target
\[
    q_a(x)=\frac12\phi(x-a)+\frac12\phi(x+a),
    \qquad a>1,
\]
where \(\phi\) is the standard Gaussian density.  Choosing
\(h_1(x)=x\) and \(h_2(x)=x^2\), for which
\(\pi_a h_1=0\) and \(\pi_a h_2=1+a^2\), define
\[
    b_{1,a}(x)
    :=
    \frac{
        a\{\Phi(x-a)-\Phi(x+a)\}
        -\phi(x-a)-\phi(x+a)
    }{2q_a(x)},
\]
and
\[
    b_{2,a}(x)
    :=
    \frac{1}{q_a(x)}
    \int_{-\infty}^x
    \bigl(y^2-(1+a^2)\bigr)q_a(y)\,dy .
\]
Then
\[
    k_a(x,y)
    =
    b_{1,a}(x)b_{1,a}(y)
    +
    b_{2,a}(x)b_{2,a}(y)
\]
is a positive-definite, target-adapted rank-two kernel whose Stein features are
\(x\) and \(x^2-(1+a^2)\).  Consequently, its restricted metric compares precisely the first two moments:
\[
    d_F(\rho,\nu)
    =
    \left[
        \{\rho x-\nu x\}^2
        +
        \{\rho x^2-\nu x^2\}^2
    \right]^{1/2}.
\]
Thus, for a fixed target, the kernel can be designed to emphasize scientifically relevant observables; the remaining analytical task is to verify the regularity, growth, closure, and stability properties of the resulting feature dynamics.

\begin{remark}[Relation to mean-field limits for matrix-kernel SVGD]
Matrix-valued kernels for SVGD were introduced in \cite{wang2019stein}
as a way to incorporate geometric preconditioning, such as Hessian or Fisher
information matrices, into the particle update. Rigorous mean-field limits
and well-posedness theory for SVGD are by now available in considerable
generality for scalar kernels; see, for example, \cite{LuLuNolen2019}.
For sufficiently regular bounded matrix-valued kernels, or for unbounded kernels where the target-kernel pair has certain stability properties (like in Section~\ref{sec:wasserstein-poc}), one expects the same
McKean--Vlasov transport arguments to yield Dobrushin-type stability results and, consequently, finite-time mean-field limits.
The present section uses a more rigid structure. Namely, for finite-rank
matrix kernels the Stein velocity closes on finitely many Stein features,
so both the empirical and mean-field dynamics reduce to a finite-dimensional
feature flow. This closure, rather than a general matrix-kernel mean-field
theory, is the mechanism behind the uniform-in-time estimates proved below.
\end{remark}

\begin{remark}[Global well-posedness]
We will only use matrix kernels for which the particle system and mean-field equations are globally well-posed. For the finite-rank kernels used below, global well-posedness follows from non-explosion of the closed feature vector together with the explicit growth structure of $B$. In the concrete Gaussian base cases treated in Section~\ref{sec:Gaus}, this moment vector satisfies a closed globally well-posed ODE. For the conjugate kernels in Section~\ref{sec:pullback}, well-posedness
is transferred from the Gaussian base system through the diffeomorphism.
\end{remark}

\subsection{Conjugacy under a diffeomorphism}\label{sec:conj}

Let
\[
        \widetilde\pi(dy)=\widetilde Z^{-1}e^{-\widetilde V(y)}\,dy
\]
be a base target on $\R^d$.  Let $H:\R^d\to\R^d$ be a $C^2$ orientation-preserving diffeomorphism, set
\[
        J(y):=DH(y),
\]
and define the transformed potential by
\begin{equation}\label{eq:potential-transform}
        V_H(H(y)):=\widetilde V(y)+\log\det J(y).
\end{equation}
Then $\pi_H:=H_{\#}\widetilde\pi$ has density proportional to $e^{-V_H}$.
Indeed,
\[
        e^{-V_H(H(y))}\det J(y)\,dy=e^{-\widetilde V(y)}\,dy.
\]

Let the base kernel be finite-rank,
\[
        \widetilde K(y,y')=\widetilde B(y)\widetilde B(y')^\top, \qquad \widetilde B(y)\in\R^{d\times m}.
\]
Define the transformed factor and kernel by
\begin{equation}\label{eq:factor-transform}
        B_H(H(y)):=J(y)\widetilde B(y),
\end{equation}
and
\begin{equation}\label{eq:kernel-transform}
        K_H(H(y),H(y'))
        :=
        J(y)\widetilde K(y,y')J(y')^\top .
\end{equation}
Then $K_H(x,x')=B_H(x)B_H(x')^\top$ is again positive definite.

\begin{lemma}[Conjugacy of Stein features]\label{lem:feature-conjugacy}
Let $\widetilde b_j$ be the columns of $\widetilde B$, and let $b_j^H$ be the columns of $B_H$.  Define
\[
        \widetilde f_j(y)
        :=\diver_y\widetilde b_j(y)-\widetilde b_j(y)\cdot\nabla\widetilde V(y),
\]
and
\[
        f_j^H(x)
        :=\diver_x b_j^H(x)-b_j^H(x)\cdot\nabla V_H(x).
\]
Then
\begin{equation}\label{eq:features-conjugate}
        f_j^H(H(y))=\widetilde f_j(y),
        \qquad j=1,\ldots,m.
\end{equation}
Consequently, if $\rho_H=H_{\#}\widetilde\rho$, then
\begin{equation}\label{eq:moment-conjugacy}
        \rho_H F_H=\widetilde\rho\,\widetilde F.
\end{equation}
\end{lemma}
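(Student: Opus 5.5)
The plan is to rewrite $f_j^H$ in divergence form with respect to Lebesgue measure, and then use the Piola identity for the change of variables $x=H(y)$. Write $\pi_H$'s unnormalized density as $e^{-V_H}$. Then
\[
f_j^H=e^{V_H}\,\diver_x\!\left(e^{-V_H}b_j^H\right),
\qquad
\widetilde f_j=e^{\widetilde V}\,\diver_y\!\left(e^{-\widetilde V}\widetilde b_j\right).
\]
This holds because $\diver(e^{-V}b)=e^{-V}(\diver b-b\cdot\nabla V)$. So \eqref{eq:features-conjugate} reduces to a transformation rule for the divergence of a weighted vector field under $H$.

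The key tool is the Piola transform. For a $C^1$ vector field $w$ on the $y$-side, define the pushforward field
\[
(H_*w)(H(y)):=\frac{J(y)w(y)}{\det J(y)}.
\]
The classical identity is
\[
(\diver_x H_*w)(H(y))=\frac{(\diver_y w)(y)}{\det J(y)}.
\]
I would prove it weakly rather than via the cofactor identity $\sum_b\partial_{y_b}\operatorname{cof}(J)_{ab}=0$, since the weak argument is cleaner and needs only $H\in C^2$. Take any test function $\varphi\in C_c^1$. Changing variables and using $\nabla_y(\varphi\circ H)=J^\top(\nabla\varphi\circ H)$ gives
\[
\int\nabla\varphi(x)\cdot(H_*w)(x)\,dx=\int\nabla\varphi(H(y))\cdot J(y)w(y)\,dy=\int\nabla_y(\varphi\circ H)\cdot w\,dy .
\]
Integrating by parts on both ends and comparing yields the identity. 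Orientation preservation is used here so that $|\det J|=\det J$ and $\log\det J$ in \eqref{eq:potential-transform} is well defined.

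Now apply this with $w=e^{-\widetilde V}\widetilde b_j$. By \eqref{eq:potential-transform} and \eqref{eq:factor-transform},
\[
e^{-V_H(H(y))}b_j^H(H(y))=\frac{e^{-\widetilde V(y)}J(y)\widetilde b_j(y)}{\det J(y)}=(H_*w)(H(y)).
\]
Hence
\[
\diver_x\!\left(e^{-V_H}b_j^H\right)(H(y))=\frac{\diver_y\!\left(e^{-\widetilde V}\widetilde b_j\right)(y)}{\det J(y)}.
\]
Multiplying by $e^{V_H(H(y))}=e^{\widetilde V(y)}\det J(y)$ gives $f_j^H(H(y))=\widetilde f_j(y)$. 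The regularity needed is $\widetilde b_j,\widetilde V\in C^1$ and $H\in C^2$, so that $J\in C^1$ and $b_j^H\in C^1$.

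The moment identity \eqref{eq:moment-conjugacy} then follows from the definition of pushforward. If $\rho_H=H_\#\widetilde\rho$, then $\int f_j^H\,d\rho_H=\int f_j^H\circ H\,d\widetilde\rho=\int\widetilde f_j\,d\widetilde\rho$, provided these integrals are finite.

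The only delicate step is the Piola identity, in particular its pointwise form with $C^2$ regularity. The weak-form argument gives it in the distributional sense, and since both sides are continuous, it holds pointwise.
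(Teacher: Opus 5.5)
Your proposal is correct and follows the paper's proof essentially step for step. The shared steps are the weighted-divergence form $f=e^{V}\diver(e^{-V}b)$, the Piola-type identity for the divergence under $x=H(y)$, and multiplication by $e^{V_H\circ H}=\det J\,e^{\widetilde V}$; your field $H_*w$ with $w=e^{-\widetilde V}\widetilde b_j$ is exactly the paper's $\det J\,J^{-1}W\circ H$ rewritten. Your test-function derivation of the Piola identity just spells out the ``change of variables along with the divergence theorem'' step that the paper states without detail.
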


\begin{proof}
Use the weighted-divergence identity
\[
        f_j^H=e^{V_H}\diver_x(e^{-V_H}b_j^H).
\]
By change of variables along with the divergence theorem, for any smooth vector field $W$,
\[
        \diver_x W(H(y))
        =
        \frac{1}{\det J(y)}
        \diver_y\left(\det J(y)J(y)^{-1}W(H(y))\right).
\]
Applying this with $W=e^{-V_H}b_j^H$ gives
\begin{align*}
\diver_x(e^{-V_H}b_j^H)(H(y))
&=
\frac{1}{\det J(y)}
\diver_y\left(
\det J(y)J(y)^{-1}
\frac{e^{-\widetilde V(y)}}{\det J(y)}
J(y)\widetilde b_j(y)
\right)\\
&=
\frac{1}{\det J(y)}
\diver_y\left(e^{-\widetilde V(y)}\widetilde b_j(y)\right).
\end{align*}
Multiplying by $e^{V_H(H(y))}=\det J(y)e^{\widetilde V(y)}$ gives
\[
        f_j^H(H(y))
        =e^{\widetilde V(y)}\diver_y(e^{-\widetilde V}\widetilde b_j)(y)
        =\widetilde f_j(y).
\]
The identity for expectations follows by the change of variables $x=H(y)$.
\end{proof}

\begin{lemma}[Velocity conjugacy]\label{lem:velocity-conjugacy}
Let $\rho_H=H_\#\widetilde\rho$.  Then, writing $\widetilde v_{\widetilde \rho}$ for the SVGD velocity at $\widetilde \rho$ for potential $\widetilde V$ and kernel $\widetilde K$, and $ v_{\rho_H}^H$ for the velocity at $\rho_H$ for potential $V_H$ and kernel $K_H$, we have
\begin{equation}\label{eq:velocity-conjugacy}
        v_{\rho_H}^H(H(y))=J(y)\widetilde v_{\widetilde\rho}(y).
\end{equation}
Consequently, if $Y_t$ solves the base characteristic equation
\[
        \dot Y_t=\widetilde v_{\widetilde\rho_t}(Y_t),
\]
then $X_t:=H(Y_t)$ solves the transformed characteristic equation
\[
        \dot X_t=v_{\rho_t^H}^H(X_t),
        \qquad \rho_t^H=H_\#\widetilde\rho_t.
\]
\end{lemma}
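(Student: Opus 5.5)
The plan is to use the finite-rank structure \eqref{eq:velocity-finite-rank} together with Lemma~\ref{lem:feature-conjugacy}, so that the velocity identity reduces to a pointwise statement about the factors. Since $K_H(x,x')=B_H(x)B_H(x')^\top$ has columns $b_j^H$ with features $f_j^H$, formula \eqref{eq:velocity-finite-rank} gives
\[
v^H_{\rho_H}(x)=B_H(x)\,\rho_H F_H .
\]
Likewise, in the base system,
\[
\widetilde v_{\widetilde\rho}(y)=\widetilde B(y)\,\widetilde\rho\,\widetilde F .
\]

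Evaluating the first identity at $x=H(y)$ and using the defining relation \eqref{eq:factor-transform}, $B_H(H(y))=J(y)\widetilde B(y)$, we get
\[
v^H_{\rho_H}(H(y))=J(y)\widetilde B(y)\,\rho_H F_H .
\]
By \eqref{eq:moment-conjugacy} in Lemma~\ref{lem:feature-conjugacy}, $\rho_H F_H=\widetilde\rho\,\widetilde F$ whenever $\rho_H=H_\#\widetilde\rho$. This yields $J(y)\widetilde B(y)\widetilde\rho\widetilde F=J(y)\widetilde v_{\widetilde\rho}(y)$, which is \eqref{eq:velocity-conjugacy}.

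Two points need checking for this to be legitimate.
\begin{itemize}
\item The finite-rank representation \eqref{eq:velocity-finite-rank} must hold for $K_H$. This requires $\diver_y K_H(x,y)=B_H(x)\,\big(\diver b_j^H(y)\big)_j$, which follows from $K_H(x,y)_{ab}=\sum_j (b_j^H)_a(x)(b_j^H)_b(y)$ with $x$ held fixed.
\item The integrals $\rho_H F_H$ and $\widetilde\rho\widetilde F$ must be finite. I would assume integrability of $\widetilde F$ under $\widetilde\rho$; integrability of $F_H$ under $\rho_H$ is then equivalent by change of variables.
\end{itemize}

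For the characteristic statement, set $X_t=H(Y_t)$ and apply the chain rule: $\dot X_t=J(Y_t)\dot Y_t=J(Y_t)\widetilde v_{\widetilde\rho_t}(Y_t)$. By the identity just proved, applied with $\widetilde\rho=\widetilde\rho_t$ and $\rho_H=\rho^H_t=H_\#\widetilde\rho_t$, this equals $v^H_{\rho^H_t}(H(Y_t))=v^H_{\rho^H_t}(X_t)$.

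It is worth noting the consistency of the two flows. If $\widetilde\rho_t=(\widetilde\Phi_t)_\#\widetilde\rho_0$ is transported by the base characteristics, then $\rho^H_t=(H\circ\widetilde\Phi_t\circ H^{-1})_\#\rho^H_0$ is transported by the $X$-characteristics. Hence $\rho_t^H$ is the mean-field solution for $(V_H,K_H)$ started from $H_\#\widetilde\rho_0$, and the same argument applies to empirical measures, since $H_\#$ of an empirical measure is the empirical measure of the mapped particles.

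I expect no real obstacle here. The only delicate point is regularity: $H\in C^2$ is needed so that $J$ is $C^1$, which makes $b^H_j$ differentiable and $f^H_j$ well defined. This is already implicit in Lemma~\ref{lem:feature-conjugacy}.
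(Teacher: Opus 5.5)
Your proposal is correct and follows the paper's proof exactly: it also writes $v^H_{\rho_H}(H(y))=B_H(H(y))\rho_H F_H=J(y)\widetilde B(y)\widetilde\rho\widetilde F=J(y)\widetilde v_{\widetilde\rho}(y)$ via the finite-rank representation and Lemma~\ref{lem:feature-conjugacy}, and gets the characteristic statement from $\dot X_t=DH(Y_t)\dot Y_t$. Your additional checks (the divergence computation for $K_H$, integrability of the features, and the regularity of $J$) are details the paper leaves implicit.
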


\begin{proof}
By the finite-rank representation and Lemma~\ref{lem:feature-conjugacy},
\[
        v_{\rho_H}^H(H(y))
        =B_H(H(y))\rho_HF_H
        =J(y)\widetilde B(y)\widetilde\rho\widetilde F
        =J(y)\widetilde v_{\widetilde\rho}(y).
\]
The characteristic statement follows from $\dot X_t=DH(Y_t)\dot Y_t$.
\end{proof}

The above observations lead to the following useful corollary.

\begin{corollary}[Transfer of feature-level POC rates within a conjugacy class]
\label{cor:conjugacy-poc-transfer}
Let $(\widetilde\pi,\widetilde K)$ be a finite-rank base pair with Stein
feature vector $\widetilde F$, and let $(\pi_H,K_H)$ be any conjugate pair
constructed from a diffeomorphism $H$ as in \eqref{eq:potential-transform}
and \eqref{eq:kernel-transform}.  Denote its Stein feature vector by $F_H$.

Fix an initial law $\widetilde\mu_0$ for the base system, and define the conjugate initial law \( \mu_0^H := H_{\#}\widetilde\mu_0 . \) Let $(\widetilde\mu_t)_{t\ge0}$ be the base mean-field solution started from $\widetilde\mu_0$, and let $(\mu_t^H)_{t\ge0}$ be the transformed mean-field solution started from $\mu_0^H$. Let \( \widetilde X_1(0),\ldots,\widetilde X_N(0) \quad\text{be i.i.d. with law }\widetilde\mu_0, \) and define the transformed initial particles by \( X_i^H(0):=H(\widetilde X_i(0)), \, i=1,\ldots,N. \) Let $\widetilde\mu_t^N$ and $\mu_t^{H,N}$ denote the corresponding empirical particle solutions.

Then, for every $t\ge0$,
\begin{equation}\label{eq:conjugacy-feature-isometry}
        d_{F_H}(\mu_t^{H,N},\mu_t^H)
        =
        d_{\widetilde F}(\widetilde\mu_t^N,\widetilde\mu_t).
\end{equation}
Consequently, any propagation-of-chaos estimate for the base pair in the
restricted feature metric transfers to every conjugate pair with the same rate
and the same constant.  In particular, if for some rate $\varepsilon_N\to0$,
\[   
        \E\left[\sup_{t\ge0}\, d_{\widetilde F}^2(\widetilde\mu_t^N,\widetilde\mu_t)\right]
        \le C\varepsilon_N,
\]
then
\begin{equation}\label{eq:conjugacy-transferred-uniform-poc}
        \E\left[\sup_{t\ge0}\, d_{F_H}^2(\mu_t^{H,N},\mu_t^H)\right]
        \le C\varepsilon_N .
\end{equation}
\end{corollary}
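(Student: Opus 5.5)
The argument will show that, under the pushforward $H$, both the particle system and the mean-field flow of the conjugate pair are exact images of the base objects. The feature isometry \eqref{eq:conjugacy-feature-isometry} then follows from Lemma~\ref{lem:feature-conjugacy}.

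\textbf{Particle level.} Let $\widetilde X_i(t)$ solve the base particle system \eqref{eq:particle-svgd} with kernel $\widetilde K$ and potential $\widetilde V$, and set $Y_i(t):=H(\widetilde X_i(t))$. Then $\frac1N\sum_i\delta_{Y_i(t)}=H_\#\widetilde\mu_t^N$. By Lemma~\ref{lem:velocity-conjugacy}, applied with $\widetilde\rho=\widetilde\mu_t^N$,
\[
\dot Y_i(t)=J(\widetilde X_i(t))\,\widetilde v_{\widetilde\mu_t^N}(\widetilde X_i(t))=v^H_{H_\#\widetilde\mu^N_t}(Y_i(t)).
\]
So $(Y_i)$ solves the transformed particle system with initial data $X_i^H(0)=H(\widetilde X_i(0))$. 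Using the global well-posedness assumed in the remark preceding this subsection, uniqueness gives $X_i^H(t)=Y_i(t)$, and hence $\mu_t^{H,N}=H_\#\widetilde\mu_t^N$ for all $t\ge0$.

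\textbf{Mean-field level.} Let $\widetilde\Phi_t$ denote the characteristic flow $\dot Y=\widetilde v_{\widetilde\mu_t}(Y)$, so that $\widetilde\mu_t=(\widetilde\Phi_t)_\#\widetilde\mu_0$. By Lemma~\ref{lem:velocity-conjugacy}, $H\circ\widetilde\Phi_t\circ H^{-1}$ is the characteristic flow of the field $v^H_{H_\#\widetilde\mu_t}$. Consequently $\rho_t:=H_\#\widetilde\mu_t$ is transported by its own velocity field $v^H_{\rho_t}$ from $\mu_0^H$, i.e.\ it is a weak solution of the transformed mean-field equation. One can also verify this directly: for $\varphi\in C_c^1$, $\varphi\circ H\in C_c^1$, and the chain rule gives $\nabla(\varphi\circ H)\cdot\widetilde v=\nabla\varphi(H)\cdot J\widetilde v=\nabla\varphi(H)\cdot v^H$. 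By uniqueness of the transformed mean-field solution, which is the same well-posedness assumption, $\mu_t^H=H_\#\widetilde\mu_t$.

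\textbf{Conclusion.} Apply \eqref{eq:moment-conjugacy} twice:
\[
\mu_t^{H,N}F_H=\widetilde\mu_t^N\widetilde F,\qquad \mu_t^HF_H=\widetilde\mu_t\widetilde F.
\]
Subtracting and taking $\|\cdot\|_2$ yields \eqref{eq:conjugacy-feature-isometry} pathwise for every $t$ and every realization. Taking $\sup_{t\ge0}$, squaring and then expectations transfers the bound, with the same $C$ and $\varepsilon_N$, giving \eqref{eq:conjugacy-transferred-uniform-poc}.

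The main obstacle is the uniqueness step. The identification $\mu_t^H=H_\#\widetilde\mu_t$, and likewise for the particles, requires that the transformed equations have unique solutions; otherwise the pushed-forward solution might differ from the one referred to in the statement. I would handle this by invoking the well-posedness remark, which says well-posedness is transferred from the base system through $H$. If needed, I would argue uniqueness directly: any transformed solution $\rho_t$ pulls back to $H^{-1}_\#\rho_t$, which solves the base equation by the inverse chain-rule computation, so uniqueness of the base system forces $\rho_t=H_\#\widetilde\mu_t$. A secondary point is that the features must be integrable along the flow, so that $\rho F$ is finite. This is inherited from the base system through the change of variables $x=H(y)$ in Lemma~\ref{lem:feature-conjugacy}.
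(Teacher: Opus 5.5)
Your proposal is correct and is essentially the argument the paper intends. The paper gives no separate proof and presents the corollary as a direct consequence of Lemmas~\ref{lem:feature-conjugacy} and~\ref{lem:velocity-conjugacy}, with well-posedness of the conjugate system inherited from the base system; your identifications $\mu_t^{H,N}=H_\#\widetilde\mu_t^N$ and $\mu_t^H=H_\#\widetilde\mu_t$, followed by \eqref{eq:moment-conjugacy}, spell this out, and your pullback argument for uniqueness makes precise the paper's bare assertion that well-posedness transfers, so the pathwise identity carries the $\E[\sup_{t\ge0}]$ bound over with the same constant.
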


\subsection{Uniform-in-time POC for Gaussian targets under constant and bilinear kernels}\label{sec:Gaus}

In this section the base target is the standard Gaussian
\[
        \gamma_d(dy)=(2\pi)^{-d/2}e^{-\norm y^2/2}\,dy.
\]

\subsubsection{The constant kernel}

As a warm-up, we start with the very simple kernel
\begin{equation}\label{eq:constant-kernel}
        \widetilde K_1(y,y')=\Id_d.
\end{equation}
This has factor $\widetilde B_1(y)\equiv\Id_d$.  The columns are the constant vector fields $e_1,\ldots,e_d$, and hence the Stein feature vector is
\begin{equation}\label{eq:constant-features}
        \widetilde F_1(y)=-y.
\end{equation}
Therefore
\begin{equation}\label{eq:constant-velocity}
        \widetilde v_{\widetilde\rho}(y)=-\widetilde m_\rho,
        \qquad
        \widetilde m_\rho:=\int y\,\widetilde\rho(dy).
\end{equation}
If $\widetilde m(t):=\widetilde\mu_t y$, then
\begin{equation}\label{eq:constant-moment-ode}
        \dot{\widetilde m}(t)=-\widetilde m(t),
        \qquad
        \widetilde m(t)=e^{-t}\widetilde m(0).
\end{equation}
[The derivative above can be justified whenever the initial distribution $\widetilde \mu_0$ satisfies $\int\|y\| \, \widetilde \mu_0(dy)<\infty$.]

For the particle system, $\widetilde m^N(t):=\widetilde\mu_t^N y$ satisfies the same scalar ODE with initial condition $\widetilde m^N(0)$.

\begin{proposition}[Direct uniform POC for the constant Gaussian kernel]\label{prop:constant-gaussian-poc}
Let $Y_1(0),\ldots,Y_N(0)$ be i.i.d. with law $\widetilde\mu_0$, and assume
\[
        \E\norm{Y(0)}^2<\infty.
\]
Let $\widetilde\mu_t$ and $\widetilde\mu_t^N$ be the mean-field and particle solutions for the kernel \eqref{eq:constant-kernel}.  Then
\begin{equation}\label{eq:constant-uniform-poc}
        \E\left[\sup_{t\ge0}\,\norm{\widetilde\mu_t^N y-\widetilde\mu_t y}^2\right]
        \le
        \frac{\Tr\Cov(Y(0))}{N}.
\end{equation}
\end{proposition}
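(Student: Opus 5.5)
The plan is to exploit the exact closure of the constant-kernel dynamics on the first moment, so that no coupling or Gr\"onwall argument is needed. By \eqref{eq:constant-velocity}, every particle moves with the common velocity $-\widetilde m^N(t)$, where $\widetilde m^N(t):=\widetilde\mu_t^N y$. Averaging over $i$ gives the closed linear ODE $\dot{\widetilde m}^N(t)=-\widetilde m^N(t)$. Hence $\widetilde m^N(t)=e^{-t}\widetilde m^N(0)$, and well-posedness is immediate. Separately, by \eqref{eq:constant-moment-ode}, the mean-field first moment satisfies $\widetilde m(t)=e^{-t}\widetilde m(0)$. This step is justified by the assumption $\E\|Y(0)\|^2<\infty$, which in particular gives a finite first moment.

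Subtracting the two solutions yields the exact identity
\[
\widetilde\mu_t^N y-\widetilde\mu_t y=e^{-t}\bigl(\widetilde m^N(0)-\widetilde m(0)\bigr),\qquad t\ge0.
\]
Since $e^{-t}\le1$, the supremum over $t\ge0$ is attained at $t=0$:
\[
\sup_{t\ge0}\norm{\widetilde\mu_t^N y-\widetilde\mu_t y}^2=\norm{\widetilde m^N(0)-\widetilde m(0)}^2 .
\]
Here $\widetilde m^N(0)=N^{-1}\sum_{i=1}^N Y_i(0)$ and $\widetilde m(0)=\E Y(0)$.

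It remains to take expectations. The $Y_i(0)$ are i.i.d. with finite second moment, so the variance of the sample mean gives
\[
\E\norm{\widetilde m^N(0)-\widetilde m(0)}^2=\frac{1}{N}\E\norm{Y(0)-\E Y(0)}^2=\frac{\Tr\Cov(Y(0))}{N}.
\]
This is \eqref{eq:constant-uniform-poc}, with equality.

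No step is a genuine obstacle. The only point needing care is rigor in the mean-field moment ODE: one must differentiate $t\mapsto\int y\,\widetilde\mu_t(dy)$ using the unbounded test function $y$. I would handle this by the cutoff argument used for $V$ in Lemma~\ref{lem:BBG-lem5}. Alternatively, one can note that the characteristics are explicit translations, $Y_t=Y_0-(1-e^{-t})\widetilde m(0)$, so $\widetilde\mu_t$ is a shift of $\widetilde\mu_0$ and the moment formula follows directly.
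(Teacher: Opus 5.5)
Your proof is correct and follows the paper's argument: you use the exact identity $\widetilde\mu_t^N y-\widetilde\mu_t y=e^{-t}\bigl(\widetilde\mu_0^N y-\widetilde\mu_0 y\bigr)$ coming from the closed first-moment ODE, and then the variance formula for an i.i.d.\ sample mean. Your additions refine the paper's argument without changing the route: you note that the supremum is attained at $t=0$, so the bound holds with equality, and you use the explicit translation form of the mean-field characteristics to justify the moment ODE.
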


\begin{proof}
By \eqref{eq:constant-moment-ode},
\[
        \widetilde\mu_t^Ny-\widetilde\mu_ty
        =e^{-t}\left(\widetilde\mu_0^Ny-\widetilde\mu_0y\right).
\]
Taking expectations and using i.i.d. empirical averaging gives
\[
       \E\norm{\widetilde\mu_0^Ny-\widetilde\mu_0y}^2
        =
        \frac{\Tr\Cov(Y(0))}{N},
\]
which proves the lemma.
\end{proof}

\subsubsection{The bilinear kernel}

Now, we consider the bilinear kernel given by
\begin{equation}\label{eq:affine-kernel}
        \widetilde K_2(y,y')=(1+y^\top y')\Id_d.
\end{equation}
A finite-rank factor is
\begin{equation}\label{eq:affine-factor}
        \widetilde B_2(y)=\bigl[\,\Id_d,\ y_1\Id_d,\ldots,y_d\Id_d\,\bigr]
        \in\R^{d\times(d+d^2)}.
\end{equation}
The Stein features are
\begin{equation}\label{eq:affine-features}
        -y_i,
        \qquad
        \delta_{ij}-y_iy_j,
        \qquad 1\le i,j\le d.
\end{equation}
Writing
\[
        \widetilde m_\rho:=\int y\,\widetilde\rho(dy),
        \qquad
        \widetilde S_\rho:=\int yy^\top\,\widetilde\rho(dy),
\]
the velocity is
\begin{equation}\label{eq:affine-velocity}
        \widetilde v_{\widetilde\rho}(y)
        =-\widetilde m_\rho+(\Id_d-\widetilde S_\rho)y.
\end{equation}
Thus, for
\[
        \widetilde m(t):=\widetilde\mu_t y,
        \qquad
        \widetilde S(t):=\widetilde\mu_t(yy^\top),
\]
the first two moments solve
\begin{equation}\label{eq:affine-moment-ode}
        \dot{\widetilde m}=-\widetilde S\widetilde m,
        \qquad
        \dot{\widetilde S}=2\widetilde S-2\widetilde S^2-2\widetilde m\widetilde m^\top .
\end{equation}
[The derivatives above can be justified whenever the initial distribution $\widetilde \mu_0$ satisfies $\int\|y\|^2 \, \widetilde \mu_0(dy)<\infty$.]

The same ODE is satisfied by the empirical pair
\[
        \widetilde m^N(t):=\widetilde\mu_t^Ny,
        \qquad
        \widetilde S^N(t):=\widetilde\mu_t^N(yy^\top),
\]
with the corresponding empirical initial condition.

For later use define the moment distance
\begin{equation}\label{eq:latent-moment-distance}
        D_2\bigl((m,S),(\bar m,\bar S)\bigr)
        :=
        \left(\norm{m-\bar m}^2+\norm{S-\bar S}_F^2\right)^{1/2},
\end{equation}
where $\|\cdot\|_F$ denotes the Frobenius norm.

\begin{lemma}[Uniform stability on bounded nondegenerate moment sets]
\label{lem:affine-stability}
Fix $R<\infty$ and $c_0>0$, and define
\[
        \mathcal{D}_{R,c_0}
        :=
        \Bigl\{
        (m,S):\ \|m\|+\|S\|_F\le R,\quad
        S-mm^\top\ge c_0 I_d
        \Bigr\}.
\]
Then there exist constants
\[
        \eta=\eta(R,c_0,d)>0,
        \qquad
        L=L(R,c_0,d)<\infty,
\]
such that, for every $(m_0,S_0)\in\mathcal D_{R,c_0}$ and every
$(m,S)$ in
$
       U\coloneqq \{ (m,S): D_2\bigl((m,S),(m_0,S_0)\bigr)\le \eta\},
$
the flow $\Phi_t$ of \eqref{eq:affine-moment-ode} satisfies
\[
        \sup_{t\ge0}
        D_2\bigl(\Phi_t(m,S),\Phi_t(m_0,S_0)\bigr)
        \le
        L\,D_2\bigl((m,S),(m_0,S_0)\bigr).
\]
\end{lemma}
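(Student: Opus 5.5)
The plan is to combine a global attraction argument for the moment flow \eqref{eq:affine-moment-ode} towards its equilibrium $(0,\Id_d)$ with local exponential stability of that equilibrium. The proof then splits into a finite-horizon Gr\"onwall estimate and a contraction estimate near equilibrium.

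\emph{Step 1: covariance form of the flow.} I would rewrite \eqref{eq:affine-moment-ode} in the variables $(m,\Sigma)$ with $\Sigma:=S-mm^\top$. By \eqref{eq:affine-velocity} the velocity is affine, so
\[
\dot m=-Sm,\qquad \dot\Sigma=(\Id_d-S)\Sigma+\Sigma(\Id_d-S)=2\Sigma-2\Sigma^2-mm^\top\Sigma-\Sigma mm^\top .
\]
First, $\frac{d}{dt}\|m\|^2=-2m^\top Sm\le-2\lambda_{\min}(\Sigma)\|m\|^2$. Second, writing $\Sigma_t=A_t\Sigma_0A_t^\top$ with $\dot A=(\Id_d-S)A$ shows that $\Sigma_t$ stays positive definite. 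I would then get quantitative bounds from scalar comparison arguments. For $\lambda_{\max}(\Sigma)$ the comparison ODE is $\dot\lambda\le 2\lambda-2\lambda^2$. For $\lambda_{\min}(\Sigma)$ the comparison ODE is $\dot\lambda\ge 2\lambda-2\lambda^2-2\|m\|^2\lambda$, using that $\|m\|$ is nonincreasing. Together these give, for data in $\mathcal D_{2R,c_0/2}$, bounds $c_1\Id_d\le\Sigma_t\le C_1\Id_d$ uniformly in $t$. They also give an explicit time $T_0=T_0(R,c_0,d)$ by which $\|m_t\|+\|\Sigma_t-\Id_d\|_F\le r$, for any prescribed small $r$. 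The mean decays exponentially at rate $\ge c_1$. Once $m$ is small, the $\Sigma$-equation is a small perturbation of the Riccati flow $\dot\Sigma=2\Sigma-2\Sigma^2$, which converges to $\Id_d$ uniformly on compact subsets of the positive-definite cone.

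\emph{Step 2: local contraction at the equilibrium.} I would linearize \eqref{eq:affine-moment-ode} at $(0,\Id_d)$. The result is $\dot{\delta m}=-\delta m$ and $\dot{\delta S}=2\delta S-2(\delta S+\delta S)=-2\delta S$, so the Jacobian is $-\mathrm{diag}(1,2)$, which is hyperbolically stable. Hence there are $r>0$ and $L_1<\infty$ such that the variational equation $\dot Z=D\mathcal F(\Phi_t)Z$ satisfies $\|Z_t\|\le L_1e^{-t/2}\|Z_0\|$ along any trajectory that stays in the $2r$-ball around $(0,\Id_d)$. The $r$-ball is forward invariant for $r$ small. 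This gives $D_2(\Phi_t(p),\Phi_t(q))\le L_1D_2(p,q)$ for all $t\ge0$ and all $p,q$ in that ball, via the mean-value inequality along the segment joining $p$ and $q$; the segment lies in the ball by convexity.

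\emph{Step 3: assembly.} Choose $\eta$ small enough that every $(m,S)\in U$ lies in $\mathcal D_{2R,c_0/2}$; this works since $S-mm^\top$ depends continuously on $(m,S)$ and $\eta\le c_0/(4(1+R))$ suffices. By Step 1, both trajectories, and the whole segment between the initial points (which lies in $U$, a convex set), are in the $r$-ball by time $T_0$. On $[0,T_0]$ the vector field is Lipschitz on the bounded invariant region, with constant $\Lambda=\Lambda(R,c_0,d)$. Gr\"onwall then gives $D_2(\Phi_t p,\Phi_t q)\le e^{\Lambda T_0}D_2(p,q)$ for $t\le T_0$. For $t\ge T_0$, Step 2 gives an additional factor $L_1$, so $L=L_1e^{\Lambda T_0}$.

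The main obstacle is Step 1, namely a lower bound on $\lambda_{\min}(\Sigma_t)$ and a convergence time $T_0$ that are uniform over the nondegenerate compact set. The coupling term $-mm^\top\Sigma-\Sigma mm^\top$ can transiently shrink $\Sigma$ in the direction of $m$. To handle this, I would first integrate $\|m_t\|^2\le\|m_0\|^2$ into the comparison inequality $\dot\lambda_{\min}\ge2\lambda_{\min}(1-\|m_0\|^2-\lambda_{\min})$. That alone may fail if $\|m_0\|\ge1$, so as a fallback I would use the identity $\frac{d}{dt}\big(\Sigma^{-1}\big)=2\Sigma^{-1}\cdots$ written in terms of $S^{-1}$. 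Indeed $\frac{d}{dt}S^{-1}$ is linear-plus-bounded, and controlling $S^{-1}$ together with $\int_0^\infty\|m_t\|^2dt<\infty$ yields the needed uniform ellipticity.
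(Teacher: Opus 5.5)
Your architecture is the paper's, and your Step 2 (the Jacobian $-\mathrm{diag}(1,2)$ at $(0,\Id_d)$ and the variational-equation contraction) and your assembly are fine. The plan is: confine trajectories started near $\mathcal D_{R,c_0}$ to a bounded nondegenerate region, find a uniform time $T_0$ after which they are near $(0,\Id_d)$, then apply Gr\"onwall on $[0,T_0]$ and local contraction afterwards.

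\textbf{The gap.} It is exactly the step you flag: the uniform bound $\lambda_{\min}(\Sigma_t)\ge c_1$. Your exponential decay of $m$, and hence $T_0$ and $L$, all rest on it.

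\emph{The direct comparison fails.} Using only $\|m_t\|\le\|m_0\|$, the inequality $\dot\lambda\ge 2\lambda(1-\|m_0\|^2-\lambda)$ allows $\lambda_{\min}$ to decay exponentially when $\|m_0\|\ge1$.

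\emph{The fallback is circular.} It rests on $\int_0^\infty\|m_t\|^2\,dt<\infty$, which you never establish. From $\frac{d}{dt}\|m\|^2=-2m^\top\Sigma m-2\|m\|^4$ you get exponential decay only once $\lambda_{\min}(\Sigma)$ is already bounded below. Otherwise you get only $\|m_t\|^2\le\|m_0\|^2/(1+2\|m_0\|^2t)$, which is not integrable.

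\emph{The $S^{-1}$ route does not help.} One has $\frac{d}{dt}S^{-1}=2\Id_d-2S^{-1}+2S^{-1}mm^\top S^{-1}$, and a lower bound on $S$ gives no lower bound on $\Sigma=S-mm^\top$ in the direction of $m$. The useful identity is $\frac{d}{dt}\Sigma^{-1}=2\Id_d-2\Sigma^{-1}+\Sigma^{-1}mm^\top+mm^\top\Sigma^{-1}$. Its trace gives $\frac{d}{dt}\Tr\Sigma^{-1}\le 2d-2(1-\|m\|^2)\Tr\Sigma^{-1}$, but this still needs the missing integrability.

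\textbf{How the paper closes it.} The paper uses the Gaussian-entropy Lyapunov function $\mathcal H(m,S)=\frac12(\Tr S-\log\det\Sigma-d)\ge0$, whose derivative along the flow is exactly $-(\|S-\Id_d\|_F^2+\|m\|^2)$. This yields $\int_0^\infty\|m_t\|^2\,dt\le\mathcal H(m_0,S_0)$, which would make your fallback valid. More directly, it gives $\det\Sigma_t\ge e^{-2H_*-d}$; combined with $\Tr S_t\le\max\{\Tr S_0,d\}$ this bounds $\lambda_{\min}(\Sigma_t)$ from below. Comparability of $\mathcal H$ with $\|m\|^2+\|S-\Id_d\|_F^2$ on the resulting compact set then gives uniform exponential convergence and $T_0$.

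\textbf{A repair of your own argument.} You can also close your comparison argument without any integrability, working in the Dini sense for $\lambda_{\min}$.
\begin{itemize}
\item The non-integrable bound already gives $\|m_t\|^2\le 1/(2t)\le\frac12$ for $t\ge1$.
\item Hence for $t\ge1$, whenever $\lambda_{\min}\le\frac14$, one has $\frac{d}{dt}\log\lambda_{\min}\ge 2-2\lambda_{\min}-2\|m_t\|^2\ge\frac12$.
\item On $[0,1]$ one has $\frac{d}{dt}\log\lambda_{\min}\ge-2C_1-2\|m_0\|^2$.
\end{itemize}
Therefore $\lambda_{\min}(\Sigma_t)\ge\min\{\lambda_{\min}(\Sigma_0)e^{-2(C_1+4R^2)},\frac14\}$ for all $t\ge0$ and all data in $\mathcal D_{2R,c_0/2}$.

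With either repair, the rest of your plan goes through: exponential decay of $m$, the Riccati comparison giving $T_0$, and $L=L_1e^{\Lambda T_0}$.
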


\begin{proof}
Write
\[
        z=(m,S),\qquad \Sigma=S-mm^\top,
\]
and let
\[
        \mathcal D:=\{(m,S): S=S^\top,\ \Sigma=S-mm^\top>0\}.
\]
The vector field in \eqref{eq:affine-moment-ode} is polynomial, hence smooth on
$\R^d\times\R^{d\times d}_{\rm sym}$.  We first record two elementary
a priori estimates on the domain $\mathcal D$.

Let $(m(t),S(t))$ be a solution of \eqref{eq:affine-moment-ode} and set
$\Sigma(t)=S(t)-m(t)m(t)^\top$.  We obtain
\begin{align*}
        \dot \Sigma
        &=\dot S-\dot m\,m^\top-m\,\dot m^\top  \\
        &=2S-2S^2-2mm^\top+Smm^\top+mm^\top S  \\
        &=(\Id_d-S)\Sigma+\Sigma(\Id_d-S).
\end{align*}
Consequently, as long as the solution exists,
\[
        \Sigma(t)=R(t)\Sigma(0)R(t)^\top,
        \qquad
        \dot R(t)=(\Id_d-S(t))R(t),\quad R(0)=\Id_d.
\]
Thus positive definiteness of $\Sigma$ is preserved.

Next set
\[
        q(t):=\Tr S(t).
\]
Since $S(t)=\Sigma(t)+m(t)m(t)^\top\ge0$, we have using the Cauchy-Schwarz inequality,
\begin{equation}\label{cs}
        \Tr(S(t)^2)\ge \frac{1}{d}(\Tr S(t))^2.
\end{equation}
Taking traces in the $S$ equation gives
\[
        \dot q(t)
        =
        2q(t)-2\Tr(S(t)^2)-2\norm{m(t)}^2
        \le
        2q(t)-\frac{2}{d}q(t)^2.
\]
Hence, by comparison with the scalar logistic equation,
\begin{equation}\label{eq:trace-S-uniform-bound}
        \Tr S(t)\le \max\{\Tr S(0),d\},
        \qquad t\ge0.
\end{equation}

We also use the Lyapunov function, given by the `Gaussian entropy functional' 
\[
        \mathcal H(m,S)
        :=
        \frac12\Bigl(\Tr S-\log\det \Sigma-d\Bigr),
        \qquad \Sigma=S-mm^\top .
\]
Along any solution in $\mathcal D$, invoking Jacobi's formula,
\[
        \frac{d}{dt}\mathcal H(m(t),S(t))
        =
        \frac12\Tr \dot S(t)
        -
        \frac12\Tr\bigl(\Sigma(t)^{-1}\dot \Sigma(t)\bigr).
\]
Using
\[
        \dot \Sigma=(\Id_d-S)\Sigma + \Sigma(\Id_d-S),
\]
we get
\[
        \Tr(\Sigma^{-1}\dot \Sigma)
        =
        2\Tr(\Id_d-S).
\]
Therefore
\begin{align*}
        \frac{d}{dt}\mathcal H(m(t),S(t))
        &=
        \frac12\Bigl(
        2\Tr S-2\Tr(S^2)-2\norm m^2
        -
        2\Tr(\Id_d-S)
        \Bigr) \\
        &=
        2\Tr S-\Tr(S^2)-d-\norm m^2 \\
        &=
        -\norm{S-\Id_d}_F^2-\norm m^2.
\end{align*}
Thus
\begin{equation}\label{eq:entropy-dissipation-affine}
        \frac{d}{dt}\mathcal H(m(t),S(t))
        =
        -\Bigl(\norm{S(t)-\Id_d}_F^2+\norm{m(t)}^2\Bigr).
\end{equation}

We now make the constants uniform in the starting point chosen in the appropriate `good' set.  
Using continuity, choose a neighborhood of $\mathcal{D}_{R,c_0}$ as
$$
\mathcal{D}^\eta_{R,c_0} := \{(m,S) \in  \mathcal{D} : D_2\bigl((m,S),(m_0,S_0)\bigr)\le \eta \text{ for some } (m_0,S_0) \in \mathcal{D}_{R,c_0}\},$$ 
by choosing $\eta$ as in the lemma small enough that, for all
$(m,S)\in \mathcal{D}^\eta_{R,c_0}$,
\[
        \norm m+\norm S_F\le R+1,
        \qquad
        S-mm^\top\ge \frac{c_0}{2}\Id_d .
\]

For initial data in $\mathcal{D}^\eta_{R,c_0}$, \eqref{eq:trace-S-uniform-bound} and \eqref{cs} give
\[
        \Tr S(t)\le Q
        :=
        \max\{\sqrt d\,(R+1),d\},
        \qquad t\ge0.
\]
Moreover, since initially
\[
        \Sigma(0)\ge \frac{c_0}{2}\Id_d,
\]
$\mathcal H(m(0),S(0))$ is bounded above by a constant $H_\ast<\infty$ depending
only on $R,c_0,d$.  By \eqref{eq:entropy-dissipation-affine},
\[
        \mathcal H(m(t),S(t))\le H_\ast,
        \qquad t\ge0.
\]
Hence
\[
        -\log\det \Sigma(t)\le 2H_\ast+d,
\]
and therefore
\[
        \det \Sigma(t)\ge e^{-2H_\ast-d}.
\]
Since $0<\Sigma(t)\le S(t)$ and $\Tr S(t)\le Q$, all eigenvalues of $\Sigma(t)$ are at
most $Q$.  It follows that
\[
        \lambda_{\min}(\Sigma(t))
        \ge
        \kappa
        :=
        e^{-2H_\ast-d}Q^{-(d-1)}.
\]
Thus every trajectory starting from $\mathcal{D}^\eta_{R,c_0}$ remains for all $t\ge0$ in
the compact set
\[
        \mathcal K
        :=
        \Bigl\{
        (m,S)\in\mathcal D:
        \Tr S\le Q,\quad S-mm^\top\ge \kappa\Id_d
        \Bigr\}.
\]
In particular the solutions are global.

On $\mathcal K$, the functions
\[
        (m,S)\mapsto \mathcal H(m,S),
        \qquad
        (m,S)\mapsto
        E(m,S):=\norm m^2+\norm{S-\Id_d}_F^2
\]
vanish only at the equilibrium $(0,\Id_d)$.  Since $\mathcal K$ is compact and
the Hessian of $\mathcal H$ at $(0,\Id_d)$ is positive definite in the variables
$(m,S)$, there are constants $0<a\le A<\infty$, depending only on
$R,c_0,d$, such that
\[
        a\,E(m,S)
        \le
        \mathcal H(m,S)
        \le
        A\,E(m,S),
        \qquad (m,S)\in\mathcal K.
\]
Combining this with \eqref{eq:entropy-dissipation-affine} gives
\[
        \frac{d}{dt}\mathcal H(m(t),S(t))
        =
        -E(m(t),S(t))
        \le
        -A^{-1}\mathcal H(m(t),S(t)).
\]
Consequently,
\[
        \mathcal H(m(t),S(t))
        \le
        e^{-t/A}\mathcal H(m(0),S(0)),
\]
and hence
\begin{equation}\label{eq:uniform-exp-convergence-affine}
        \norm{m(t)}^2+\norm{S(t)-\Id_d}_F^2
        \le
        C e^{-ct},
        \qquad t\ge0,
\end{equation}
with constants $C,c>0$ depending only on $R,c_0,d$, uniformly for all initial
data in $\mathcal{D}^\eta_{R,c_0}$.

It remains to turn this uniform convergence into a uniform Lipschitz estimate
for the flow.  Let
\[
        z(t)=(m(t),S(t)),
        \qquad
        \bar z(t)=(\bar m(t),\bar S(t))
\]
be two solutions starting from $\mathcal{D}^\eta_{R,c_0}$.  Since the vector field is $C^1$
on the compact set $\mathcal K$, there is a constant $M<\infty$, depending
only on $\mathcal K$, such that
\[
        \norm{D\mathcal F(z)}\le M,
        \qquad z\in\mathcal K,
\]
where $\mathcal F$ denotes the driving vector field in \eqref{eq:affine-moment-ode}.  Therefore
Gronwall's inequality gives
\begin{equation}\label{eq:finite-time-lipschitz-affine}
        D_2(z(t),\bar z(t))
        \le
        e^{Mt}D_2(z(0),\bar z(0)),
        \qquad t\ge0.
\end{equation}

We now show that, once both solutions are sufficiently close to the equilibrium,
the distance cannot grow.  Write
\[
        A(t):=S(t)-\Id_d,
        \qquad
        \bar A(t):=\bar S(t)-\Id_d,
\]
and set
\[
        u(t):=m(t)-\bar m(t),
        \qquad
        B(t):=A(t)-\bar A(t)=S(t)-\bar S(t).
\]
In these variables the moment equations become
\[
        \dot m=-m-Am,
        \qquad
        \dot A=-2A-2A^2-2mm^\top.
\]
Subtracting the two equations gives
\[
        \dot u=-u-Au-B\bar m
\]
and
\[
        \dot B
        =
        -2B-2(AB+B\bar A)
        -2\bigl(u m^\top+\bar m u^\top\bigr).
\]
Assume that
\[
        \norm m+\norm{\bar m}+\norm A_F+\norm{\bar A}_F\le r.
\]
Then, using $\norm A_{\rm op}\le \norm A_F$, we have
\begin{align*}
        \frac{d}{dt}\norm u^2
        &=
        2u^\top\dot u  \\
        &\le
        -2\norm u^2
        +2\norm A_F\norm u^2
        +2\norm B_F\norm{\bar m}\norm u  \\
        &\le
        (-2+3r)\norm u^2+r\norm B_F^2.
\end{align*}
Similarly,
\begin{align*}
        \frac{d}{dt}\norm B_F^2
        &=
        2\ip{B}{\dot B}_F  \\
        &\le
        -4\norm B_F^2
        +4(\norm A_F+\norm{\bar A}_F)\norm B_F^2
        +4\norm B_F\norm u(\norm m+\norm{\bar m}) \\
        &\le
        2r\norm u^2+(-4+6r)\norm B_F^2.
\end{align*}
Hence
\[
        \frac{d}{dt}
        \left(\norm u^2+\norm B_F^2\right)
        \le
        (-2+5r)\norm u^2+(-4+7r)\norm B_F^2.
\]
Choosing $r>0$ sufficiently small, we get
\begin{equation}\label{eq:local-contraction-affine}
        \frac{d}{dt}
        \left(\norm u^2+\norm B_F^2\right)
        \le
        -\left(\norm u^2+\norm B_F^2\right)
\end{equation}
whenever both trajectories lie in the $r$-neighborhood of $(0,\Id_d)$.

By the uniform exponential convergence estimate
\eqref{eq:uniform-exp-convergence-affine}, there exists a time
$T<\infty$, depending only on $R,c_0,d$, such that every solution starting from
$\mathcal{D}^\eta_{R,c_0}$ belongs to this $r$-neighborhood of $(0,\Id_d)$ for all $t\ge T$.
Therefore \eqref{eq:local-contraction-affine} implies that, for $t\ge T$,
\[
        D_2(z(t),\bar z(t))
        \le
        D_2(z(T),\bar z(T)).
\]
Using \eqref{eq:finite-time-lipschitz-affine} up to time $T$, we obtain
\[
        \sup_{t\ge0}D_2(z(t),\bar z(t))
        \le
        e^{MT}D_2(z(0),\bar z(0)).
\]
Finally take
\[
        \bar z(0)=(m_0,S_0),
        \qquad
        z(0)=(m,S),
\]
and set
\(
        L:=e^{MT}.
\)
This gives
\[
        \sup_{t\ge0}
        D_2\bigl(\Phi_t(m,S),\Phi_t(m_0,S_0)\bigr)
        \le
        L D_2\bigl((m,S),(m_0,S_0)\bigr),
        \qquad (m,S)\in\mathcal U.
\]
All constants depend only on the bound $R$, on $c_0$, and on $d$, as claimed.
\end{proof}

\begin{proposition}[Direct uniform POC for the affine Gaussian kernel]
\label{prop:affine-gaussian-poc}
Let $Y_1(0),\ldots,Y_N(0)$ be i.i.d. with law $\widetilde\mu_0$.  Assume that
\[
        \E \exp\{\alpha \|Y(0)\|^2\}<\infty,
\quad
        \Cov(Y(0))\ge c_0 I_d
\]
for some $\alpha>0, c_0>0$.  Let $\widetilde\mu_t$ and $\widetilde\mu_t^N$ be the
mean-field and particle solutions for the kernel \eqref{eq:affine-kernel}.
Then there is a constant
$
        C=C\bigl(d,c_0,\alpha,\E e^{\alpha\|Y(0)\|^2}\bigr)<\infty
$
such that
\[
        \E\left[\sup_{t\ge0}\, D_2^2\bigl(
        (\widetilde m^N(t),\widetilde S^N(t)),
        (\widetilde m(t),\widetilde S(t))
        \bigr)\right]
        \le
        C N^{-1}.
\]
\end{proposition}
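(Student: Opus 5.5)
The plan is to reduce the statement to Lemma~\ref{lem:affine-stability} on a good event, and to control the bad event by a crude uniform-in-time bound. Set $z_0:=(\widetilde m(0),\widetilde S(0))=(\E Y(0),\E[Y(0)Y(0)^\top])$ and $z_0^N:=(\widetilde m^N(0),\widetilde S^N(0))$, the empirical first and second moments. Both moment pairs solve the same closed ODE \eqref{eq:affine-moment-ode}, so $(\widetilde m^N(t),\widetilde S^N(t))=\Phi_t(z_0^N)$ and $(\widetilde m(t),\widetilde S(t))=\Phi_t(z_0)$. The deterministic point $z_0$ lies in $\mathcal D_{R,c_0}$ with $R:=\|\E Y\|+\|\E YY^\top\|_F$, since $S_0-m_0m_0^\top=\Cov(Y(0))\ge c_0I_d$. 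Lemma~\ref{lem:affine-stability} then supplies $\eta,L$ depending only on $(R,c_0,d)$. Define the good event $G_N:=\{D_2(z_0^N,z_0)\le\eta\}$. On $G_N$ the lemma gives
\[
\sup_{t\ge0}D_2^2\bigl(\Phi_t(z_0^N),\Phi_t(z_0)\bigr)\le L^2D_2^2(z_0^N,z_0).
\]
Taking expectations and using i.i.d.\ averaging, $\E D_2^2(z_0^N,z_0)=N^{-1}\bigl(\Tr\Cov(Y)+\E\|YY^\top-\E YY^\top\|_F^2\bigr)\le CN^{-1}$, because fourth moments are finite under the Gaussian-moment hypothesis. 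This handles the contribution of $G_N$.

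On $G_N^c$ I would use a uniform-in-time bound for each trajectory instead. The trace bound \eqref{eq:trace-S-uniform-bound}, together with $S\ge mm^\top$ and $S\ge0$, gives $\|m(t)\|^2+\|S(t)\|_F^2\le \Tr S+(\Tr S)^2\le C(1+(\Tr S(0))^2)$ for all $t$. This holds for every initial datum in the closure of $\mathcal D$; for the empirical pair, $\Sigma^N(0)$ may be singular when $N\le d$, but $\Sigma=R\Sigma(0)R^\top$ still preserves positive semidefiniteness and the trace argument goes through. Hence
\[
\sup_{t\ge0}D_2^2\bigl(\Phi_t(z_0^N),\Phi_t(z_0)\bigr)\le C\bigl(1+Q_N^2\bigr),\qquad Q_N:=\frac1N\sum_{i}\|Y_i(0)\|^2 .
\]
By Cauchy--Schwarz, $\E[(1+Q_N^2)\mathbf 1_{G_N^c}]\le (\E(1+Q_N^2)^2)^{1/2}\,\mathbb P(G_N^c)^{1/2}$. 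The first factor is bounded uniformly in $N$ by Jensen, since $\E\|Y\|^8<\infty$.

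The main obstacle is to show $\mathbb P(G_N^c)\le CN^{-2}$ (anything $o(N^{-2})$ works). Here I would use the sub-Gaussian hypothesis: the entries of $Y$ and of $YY^\top$ are respectively sub-Gaussian and sub-exponential, because $\E e^{\alpha\|Y\|^2}<\infty$. Bernstein's inequality applied coordinatewise, followed by a union bound over the $d+d^2$ coordinates, then gives $\mathbb P(D_2(z_0^N,z_0)>\eta)\le C e^{-cN\min(\eta,\eta^2)}$. A simpler alternative suffices: Markov's inequality with the $p$-th moment of the centered i.i.d.\ sum, for $p=8$, yields $\mathbb P(G_N^c)\le C\eta^{-8}N^{-4}$ via Rosenthal. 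Combining the two events gives the stated $CN^{-1}$ bound. The constants depend only on $d$, $c_0$, $\alpha$ and $\E e^{\alpha\|Y\|^2}$, since $R$ and all moments of $Y$ are controlled by these quantities.
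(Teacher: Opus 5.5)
Your proposal is correct and follows essentially the same route as the paper's proof. On the good event $\{D_2(z_0^N,z_0)\le\eta\}$ you use Lemma~\ref{lem:affine-stability} together with $\E D_2^2(z_0^N,z_0)\le CN^{-1}$. On the complement you use the uniform trace bound \eqref{eq:trace-S-uniform-bound}, Cauchy--Schwarz with eighth moments, and a coordinatewise Bernstein bound that makes $\mathbb P(G_N^c)$ exponentially small.

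You also add two things the paper does not. First, you check that the trace argument still works when the empirical covariance is singular, a case the paper passes over. Second, your Markov--Rosenthal alternative shows that finitely many polynomial moments would suffice in place of the sub-Gaussian hypothesis. Both are valid refinements.
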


\begin{proof}
Write
\[
        z(t):=(\widetilde m(t),\widetilde S(t)),
        \qquad
        z^N(t):=(\widetilde m^N(t),\widetilde S^N(t)),
\]
and denote the initial moment pairs by
\[
        z_0:=(\widetilde m(0),\widetilde S(0)),
        \qquad
        z_0^N:=(\widetilde m^N(0),\widetilde S^N(0)).
\]
Both $z(t)$ and $z^N(t)$ solve the same
finite-dimensional ODE \eqref{eq:affine-moment-ode}, with initial data
$z_0$ and $z_0^N$, respectively.

Let
$
        K_\alpha:=\E e^{\alpha\norm{Y(0)}^2}<\infty.
$
This implies that all polynomial moments of
$Y(0)$ are finite, with bounds depending only on $\alpha$ and $K_\alpha$.
In particular,
\[
        \E\norm{Y(0)}^2+\E\norm{Y(0)}^4 + \E\norm{Y(0)}^8 \le C(d,\alpha,K_\alpha) <\infty.
\]
Choose
\[
        R\ge 1+\norm{\widetilde m(0)}+\norm{\widetilde S(0)}_F .
\]
Using the preceding moment bounds, $R$ may be chosen depending only on
$d,\alpha$ and $K_\alpha$.  Since
\[
        \widetilde \Sigma(0)
        :=
        \widetilde S(0)-\widetilde m(0)\widetilde m(0)^\top
        =
        \Cov(Y(0))
        \ge c_0\Id_d,
\]
we have
\[
        z_0\in \mathcal D_{R,c_0},
        \qquad
        \mathcal D_{R,c_0}
        :=
        \Bigl\{
        (m,S):\norm m+\norm S_F\le R,\ 
        S-mm^\top\ge c_0\Id_d
        \Bigr\}.
\]
Let $\eta=\eta(R,c_0,d)>0$ and $L=L(R,c_0,d)<\infty$ be the constants from
Lemma~\ref{lem:affine-stability}.  Define the good event
\[
        \Omega_N
        :=
        \Bigl\{
        D_2(z_0^N,z_0)\le \eta
        \Bigr\}.
\]

We first control the initial empirical moment error.  Since the particles are
i.i.d.,
\[
        \E\norm{\widetilde m^N(0)-\widetilde m(0)}^2
        =
        \frac{1}{N}\Tr\Cov(Y(0))
        \le
        \frac{1}{N}\E\norm{Y(0)}^2.
\]
Similarly,
\[
\begin{aligned}
        \E\norm{\widetilde S^N(0)-\widetilde S(0)}_F^2
        &=
        \frac{1}{N}
        \E\norm{Y(0)Y(0)^\top-\E[Y(0)Y(0)^\top]}_F^2  \\
        &\le
        \frac{1}{N}
        \E\norm{Y(0)Y(0)^\top}_F^2
        =
        \frac{1}{N}\E\norm{Y(0)}^4 .
\end{aligned}
\]
Therefore, 
\begin{equation}\label{eq:initial-moment-fluctuation-exp}
        \E D_2^2(z_0^N,z_0)
        \le
        C(d,\alpha,K_\alpha)N^{-1}.
\end{equation}

We next record a concentration estimate for $\Omega_N$.  The coordinates of
$Y(0)$ are sub-Gaussian, and the coordinates of
$Y(0)Y(0)^\top$ are sub-exponential.  Thus Bernstein's inequality for sub-exponential random variables, applied coordinatewise
and combined with a union bound over the finitely many first- and second-moment
coordinates, gives constants $C_1,c_1>0$, depending only on
$d,\alpha,K_\alpha$ and $\eta$, such that
\begin{equation}\label{eq:good-event-exp-prob}
        \mathbb{P}(\Omega_N^c)
        =
        \mathbb{P}\bigl(D_2(z_0^N,z_0)>\eta\bigr)
        \le
        C_1 e^{-c_1N}.
\end{equation}

On $\Omega_N$, Lemma~\ref{lem:affine-stability} yields
\[
        \sup_{t\ge0}D_2(z^N(t),z(t))
        \le
        L D_2(z_0^N,z_0).
\]
Consequently,
\begin{equation}\label{eq:good-event-poc}
\begin{aligned}
        \E\left[
        \mathbf 1_{\Omega_N}
        \sup_{t\ge0}D_2^2(z^N(t),z(t))
        \right]
        &\le
        L^2\,\E D_2^2(z_0^N,z_0)  \le
        C N^{-1},
\end{aligned}
\end{equation}
where $C$ depends only on $d,c_0,\alpha,K_\alpha$.

It remains to control the complementary event.  For any solution $(m(t), S(t))$ of the moment
ODE, by \eqref{eq:trace-S-uniform-bound},
\[
        \Tr S(t)\le \max\{\Tr S(0),d\},
        \qquad t\ge0.
\]
This applies both to the mean-field moment pair and to the empirical moment
pair.  Since $S(t)\ge0$ and $S^N(t)\ge0$, we have
\[
        \norm{m(t)}^2\le \Tr S(t),
        \qquad
        \norm{S(t)}_F\le \Tr S(t),
\]
and the same bounds hold for $m^N(t),S^N(t)$.  Hence there is a constant
$C_d<\infty$ such that
\[
        \sup_{t\ge0}D_2(z^N(t),z(t))
        \le
        C_d\Bigl(
        1+\Tr \widetilde S^N(0)+\Tr \widetilde S(0)
        \Bigr).
\]
Therefore, by the Cauchy--Schwarz inequality,
\begin{align}
        \E\left[
        \mathbf 1_{\Omega_N^c}
        \sup_{t\ge0}D_2^2(z^N(t),z(t))
        \right] \notag \le
        C_d^2
        \left(
        \E\bigl(1+\Tr \widetilde S^N(0)+\Tr \widetilde S(0)\bigr)^4
        \right)^{1/2}
        \mathbb{P}(\Omega_N^c)^{1/2}. \label{eq:bad-event-cs}
\end{align}
Now
\[
        \Tr \widetilde S^N(0)
        =
        \frac1N\sum_{i=1}^N \norm{Y_i(0)}^2,
\]
and so the fourth moment of $\Tr \widetilde S^N(0)+\Tr \widetilde S(0)$ is bounded uniformly in
$N$ by a constant depending only on $\E\norm{Y(0)}^8$, hence only on
$\alpha$ and $K_\alpha$.  Combining this with
\eqref{eq:good-event-exp-prob}, we obtain
\begin{equation}\label{eq:bad-event-poc}
        \E\left[
        \mathbf 1_{\Omega_N^c}
        \sup_{t\ge0}D^2_2(z^N(t),z(t))
        \right]
        \le
        C e^{-c_1N/2}
        \le
        C N^{-1},
\end{equation}
where $C$ depends only on $c_0,\alpha,d,K_\alpha$.

Finally, combining \eqref{eq:good-event-poc} and \eqref{eq:bad-event-poc}
gives
\[
        \E\left[
        \sup_{t\ge0}D_2^2(z^N(t),z(t))
        \right]
        \le
        C N^{-1}.
\]
This proves the proposition.
\end{proof}

\begin{corollary}[Gaussian initialization recovers the Wasserstein POC theorem of \cite{liu2023towards}]
\label{cor:recover-liu-wasserstein-poc}
Consider the affine Gaussian kernel \eqref{eq:affine-kernel} and the
standard Gaussian target on $\mathbb R^d$.  Assume that
\[
        \widetilde\mu_0=N(m_0,\Sigma_0),
        \qquad \Sigma_0>0,
\]
and let $Y_1(0),\ldots,Y_N(0)$ be i.i.d. with law $\widetilde\mu_0$.  Let
$\widetilde\mu_t$ denote the mean-field solution and let
\[
        \widetilde\mu_t^N:=\frac1N\sum_{i=1}^N\delta_{Y_i(t)}
\]
be the empirical measure of the particle system.  Then there is a constant
$C=C(d,m_0,\Sigma_0)<\infty$ such that, for all $N$ sufficiently large,
\begin{equation}\label{eq:liu-w2-recovered}
        \mathbb E \left[\sup_{t\ge0}\, W_2^2(\widetilde\mu_t^N,\widetilde\mu_t)\right]
        \le
        C r_d(N),
\end{equation}
where
\[
        r_d(N):=
        \begin{cases}
        N^{-1}\log\log N, & d=1,\\
        N^{-1}(\log N)^2, & d=2,\\
        N^{-2/d}, & d\ge3.
        \end{cases}
\]
Moreover, for fixed \(k\ge1\), writing $\mathcal L(Y_1^N(t),\ldots,Y_k^N(t))$ for the joint law of $Y_1^N(t),\ldots,Y_k^N(t)$,
\begin{equation}\label{eq:Gausspoc}
        \sup_{t\ge0}
        W_2^{(k)}\left(\mathcal L(Y_1^N(t),\ldots,Y_k^N(t)),\widetilde\mu_t^{\otimes k}\right)^2
        \le
        C'\left(k r_d(N)+\frac{k^2(k-1)}{N}\right),
\end{equation}
for some constant
$C'=C'(d,m_0,\Sigma_0)<\infty$.
\end{corollary}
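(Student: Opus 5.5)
The plan is to exploit the affine structure of the flow. By \eqref{eq:affine-velocity}, the velocity is affine in $y$: $\widetilde v_{\widetilde\rho}(y)=-\widetilde m_\rho+(\Id_d-\widetilde S_\rho)y$. Hence the mean-field characteristic map is affine, $X_t(z)=m(t)+A(t)(z-m_0)$, where $\dot A=(\Id_d-S(t))A$ and $A(0)=\Id_d$. Consequently $\widetilde\mu_t=N(\widetilde m(t),\widetilde\Sigma(t))$ stays Gaussian, with $\widetilde\Sigma(t)=A(t)\Sigma_0A(t)^\top$.

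The particle system is also affine. Write $Y_i(t)=m^N(t)+A^N(t)(Y_i(0)-m^N(0))$, where $\dot A^N=(\Id_d-S^N(t))A^N$. Then the empirical measure at time $t$ is the pushforward of the initial empirical measure under an affine map.

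To compare the two, I introduce an intermediate Gaussian $\nu^N_t:=N(m^N(t),\Sigma^N(t))$, where $\Sigma^N(t)=A^N(t)\Sigma_0 A^N(t)^\top$, and use the triangle inequality
\[
W_2(\widetilde\mu^N_t,\widetilde\mu_t)\le W_2(\widetilde\mu^N_t,\nu^N_t)+W_2(\nu^N_t,\widetilde\mu_t).
\]
For the first term, affine pushforward gives $W_2(\widetilde\mu^N_t,\nu^N_t)\le \|A^N(t)\|_{\rm op}\,W_2(\widetilde\mu^N_0\circ\tau,\widetilde\mu_0\circ\tau')$ up to a mean-shift correction $\|m^N(0)-m_0\|$, with suitable translations $\tau,\tau'$. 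The initial $W_2^2$ error between i.i.d. Gaussian samples and their law is $O(r_d(N))$ by Fournier--Guillin~\cite{fournier2015rate} (the $d=1,2$ logarithmic rates are the known Gaussian moment-matching rates). I therefore need $\sup_t\|A^N(t)\|_{\rm op}$ bounded, with high probability or in $L^p$.

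For the second term, the Gaussian $W_2$ formula gives $W_2^2(\nu^N_t,\widetilde\mu_t)\le \|m^N-\widetilde m\|^2+C\|\Sigma^N-\widetilde\Sigma\|_F^2/\lambda_{\min}$, using $\|\Sigma_1^{1/2}-\Sigma_2^{1/2}\|_F\le \|\Sigma_1-\Sigma_2\|_F/(2\sqrt{\lambda_{\min}})$. The covariance eigenvalues are bounded below uniformly in time by the compact set $\mathcal K$ from the proof of Lemma~\ref{lem:affine-stability}.

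It remains to control $\|A^N(t)-A(t)\|$ uniformly in time, which also yields the bound on $\|A^N\|$. Since $S(t)\to\Id_d$ exponentially by \eqref{eq:uniform-exp-convergence-affine}, $A(t)$ converges. Working with $\widetilde\Sigma=A\Sigma_0A^\top$, the Lyapunov calculation shows $\widetilde\Sigma\to \Id_d$ exponentially.

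For the difference, I write $\frac{d}{dt}(A^N-A)=(\Id_d-S^N)(A^N-A)-(S^N-S)A$. The coefficient $\Id_d-S^N$ has exponentially decaying norm beyond a deterministic time $T$, uniformly on the good event $\Omega_N$ from the proof of Proposition~\ref{prop:affine-gaussian-poc}. The forcing $(S^N-S)A$ is bounded by $L\,D_2(z^N_0,z_0)\|A\|$, and by rerunning the local contraction \eqref{eq:local-contraction-affine} it is in fact exponentially decaying beyond $T$. Gronwall applied to an integrable coefficient then gives $\sup_t\|A^N-A\|\le C\,D_2(z^N_0,z_0)$ on $\Omega_N$.

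Two points need care here. First, $S^N(0)$ must be taken relative to the empirical mean. Second, the initial mean-shift term contributes $O(N^{-1})\le O(r_d(N))$. The main obstacle is this uniform-in-time control of $A^N$, since the flow of $A$ itself is only neutrally stable in its orthogonal part; I would handle it through exponential decay of the forcing rather than contraction. On $\Omega_N^c$ I bound everything crudely by $1+\Tr S^N(0)+\Tr S(0)$ via \eqref{eq:trace-S-uniform-bound}, using Cauchy--Schwarz and $\mathbb P(\Omega_N^c)\le Ce^{-cN}$.

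Finally, the $k$-particle bound \eqref{eq:Gausspoc} follows from the sampling-without-replacement comparison \eqref{4.28} with $\nu=\widetilde\mu_t$, together with exchangeability and convexity of $W_2^2$, just as in Theorem~\ref{kpartW2}. The second-moment term there is bounded uniformly in time by \eqref{eq:trace-S-uniform-bound}.
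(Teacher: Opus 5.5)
Your argument is correct. Its skeleton matches the paper's: a Gaussian intermediate, the affine particle representation for the shape term, the Gaussian $W_2$ formula for the moment term, a crude trace bound with Cauchy--Schwarz on the bad event, and \eqref{4.28} for the $k$-particle bound. The difference is in how you bound $\sup_t\|A^N(t)\|_{\rm op}$.

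The paper centers the intermediate Gaussian at the \emph{empirical} covariance, $\Gamma_t^N=N(m^N(t),\Sigma^N(t))$. Then $W_2(\Gamma_t^N,\widetilde\mu_t)$ is controlled directly by the $D_2$ quantity of Proposition~\ref{prop:affine-gaussian-poc}. The paper also bounds the affine map in one line: $\Sigma^N(t)=A_t^N\Sigma^N(0)(A_t^N)^\top$ gives $\|A_t^N\|_{\rm op}^2\le\|\Sigma^N(t)\|_{\rm op}\|\Sigma^N(0)^{-1}\|_{\rm op}$, which the trace bound \eqref{eq:trace-S-uniform-bound} controls uniformly in time. No comparison of $A^N$ with $A$ is needed, so the neutral-stability issue you flag never arises.

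Your choice of covariance $A^N(t)\Sigma_0A^N(t)^\top$ makes the time-zero comparison a pure mean shift. It forces you, however, to prove $\sup_t\|A^N(t)-A(t)\|\lesssim D_2(z_0^N,z_0)$ on $\Omega_N$. You do this correctly, using the uniform exponential decay of $\Id_d-S^N$ and the exponential decay of $S^N-S$ after time $T$ given by \eqref{eq:local-contraction-affine}.

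That extra work buys a stronger fact: the particle and mean-field affine characteristic maps stay close uniformly in time. Since $X_t(z)=m(t)+A(t)(z-m_0)$ is affine for every initial law, coupling $Y_i(t)$ with $X_t(Y_i(0))$ would give $\E\sup_tW_2^2(\widetilde\mu_t^N,\widetilde\mu_t)\lesssim N^{-1}+\E W_2^2(\widetilde\mu_0^N,\widetilde\mu_0)$ with no Gaussian intermediate. This would hold under the non-Gaussian hypotheses of Proposition~\ref{prop:affine-gaussian-poc}, with Gaussianity entering only through the initial empirical rate.

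On that rate, one correction. Fournier--Guillin gives only $\E W_2^2(\widetilde\mu_0^N,\widetilde\mu_0)\lesssim N^{-1/2}$ for $d=3$ and $N^{-1/2}\log N$ for $d=4$. So $r_d(N)$ needs the Gaussian-specific estimates (Ledoux--Zhu, as the paper cites) for $d=3,4$ as well, not only for $d=1,2$.
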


\begin{proof}
Write
\[
        \widetilde\mu_t=N(m(t),\Sigma(t)),
        \qquad
        S(t)=\Sigma(t)+m(t)m(t)^\top .
\]
Let $m^N(t)$, $\Sigma^N(t)$, and $S^N(t)$ be the empirical mean, covariance,
and second moment of the particle system.  Define the Gaussian projection
of the empirical measure by
\[
        \Gamma_t^N:=N(m^N(t),\Sigma^N(t)).
\]
By the triangle inequality,
\begin{equation}\label{eq:w2-triangle-gaussian-projection}
        W_2^2(\widetilde\mu_t^N,\widetilde\mu_t)
        \le
        2W_2^2(\widetilde\mu_t^N,\Gamma_t^N)
        +
        2W_2^2(\Gamma_t^N,\widetilde\mu_t).
\end{equation}

We first control the second term involving the distance between the mean-field flow and the Gaussian-projection.  We use the following
elementary consequence of the Gaussian Wasserstein formula.  Let
\(
        \nu=N(m,\Sigma),
      \,
        \bar\nu=N(\bar m,\bar\Sigma),
\)
and suppose that
\(
        \Sigma\ge \lambda \Id_d,
        \,
        \bar\Sigma\ge \lambda \Id_d
\)
for some $\lambda>0$.  Then
\begin{equation}\label{eq:gaussian-w2-covariance-comparison}
        W_2^2(\nu,\bar\nu)
        \le
        \|m-\bar m\|^2
        +
        \frac{1}{4\lambda}\|\Sigma-\bar\Sigma\|_F^2 .
\end{equation}
Indeed, the Gaussian Wasserstein formula gives
\[
        W_2^2(\nu,\bar\nu)
        =
        \|m-\bar m\|^2+d_{\rm B}^2(\Sigma,\bar\Sigma),
\]
where
\[
        d_{\rm B}^2(\Sigma,\bar\Sigma)
        =
        \operatorname{Tr}\Sigma+\operatorname{Tr}\bar\Sigma
        -2\operatorname{Tr}
        \bigl(\bar\Sigma^{1/2}\Sigma\bar\Sigma^{1/2}\bigr)^{1/2}.
\]
Using the variational representation of the Bures distance,
\(
        d_{\rm B}(\Sigma,\bar\Sigma)
        =
        \min_{U\in O(d)}
        \|\Sigma^{1/2}-\bar\Sigma^{1/2}U\|_F,
\)
we have
\(
        d_{\rm B}(\Sigma,\bar\Sigma)
        \le
        \|\Sigma^{1/2}-\bar\Sigma^{1/2}\|_F .
\)
Since both covariance matrices are bounded below by $\lambda\Id_d$, the
matrix square-root map is Lipschitz in Frobenius norm on this set:
\(
        \|\Sigma^{1/2}-\bar\Sigma^{1/2}\|_F
        \le
        \frac{1}{2\sqrt{\lambda}}\|\Sigma-\bar\Sigma\|_F .
\)
This proves \eqref{eq:gaussian-w2-covariance-comparison}.

Now write
\[
        S=\Sigma+mm^\top,
        \qquad
        \bar S=\bar\Sigma+\bar m\bar m^\top .
\]
If additionally
\(
        \|m\|\vee\|\bar m\|\le M,
\)
then
\[
\begin{aligned}
        \|\Sigma-\bar\Sigma\|_F
        \le
        \|S-\bar S\|_F
        +
        \|mm^\top-\bar m\bar m^\top\|_F 
        \le
        \|S-\bar S\|_F
        +
        2M\|m-\bar m\|.
\end{aligned}
\]
Combining this with \eqref{eq:gaussian-w2-covariance-comparison} gives
\begin{equation}\label{eq:gaussian-w2-second-moment-comparison}
        W_2^2(\nu,\bar\nu)
        \le
        K_{\lambda,M}
        \left(
        \|m-\bar m\|^2+\|S-\bar S\|_F^2
        \right),
\end{equation}
where one may take
\(
        K_{\lambda,M}
        :=
        \max\left\{
        1+\frac{2M^2}{\lambda},
        \frac{1}{2\lambda}
        \right\}.
\)
Consequently,
\[
        W_2^2(\Gamma_t^N,\widetilde\mu_t)
        \le
        K_{\lambda,M} \,
        D_2\bigl((m^N(t),S^N(t)),(m(t),S(t))\bigr)^2 .
\]
Using the $L^2$ form of Proposition~\ref{prop:affine-gaussian-poc}, obtained
from the same proof by applying the $L^2$ concentration bound for the initial
empirical mean and second moment, gives
\begin{equation}\label{eq:projection-term-bound}
        \mathbb E\left[\sup_{t\ge0}\, W_2^2(\Gamma_t^N,\widetilde\mu_t)\right]
        \le
        K_{\lambda,M} \, N^{-1}.
\end{equation}

It remains to control the first term $W_2^2(\widetilde\mu_t^N,\Gamma_t^N)$ involving the distance between the empirical measure process and the Gaussian projection.  This is the only place where
Gaussian initialization is used.  It can be checked from the
bilinear-kernel driven SVGD that the particle system has the following affine representation:
\[
        Y_i(t)=m^N(t)+A_t^N\bigl(Y_i(0)-m^N(0)\bigr),
        \qquad i=1,\ldots,N,
\]
where $A_t^N$ solves the fundamental matrix differential equation
\[
\dot A_t^N= (I_d - S^N_t)A_t^N, \quad A_0^N = I_d.
\]
Consequently,
\begin{equation}\label{asig}
        \Sigma^N(t)=A_t^N \Sigma^N(0)(A_t^N)^\top .
\end{equation}
Thus $\widetilde\mu_t^N$ is the image of $\widetilde\mu_0^N$ under this
affine map governing particle trajectories, while $\Gamma_t^N$ is the image of
$N(m^N(0),\Sigma^N(0))$ under the same affine map.

Let
\[
        \mathcal G_N
        :=
        \left\{
        \Sigma^N(0)\ge \frac12 \Sigma_0,\quad
        \operatorname{Tr} S^N(0)\le R
        \right\},
\]
where $\operatorname{Tr} (\Sigma_0 + m_0m_0^T) <R<\infty$ is fixed sufficiently large depending only on $m_0$, $d_0$ and
$\Sigma_0$.  On $\mathcal G_N$, by \eqref{eq:trace-S-uniform-bound},
\[
        \sup_{t\ge0}\lVert \Sigma^N(t)\rVert_{\mathrm{op}}\le \max\{R,d\} .
\]
Since
\(
        A_t^N
        =
        A_t^N (\Sigma^N(0))^{1/2}(\Sigma^N(0))^{-1/2},
\)
we get using~\eqref{asig},
\[
        \lVert A_t^N\rVert_{\mathrm{op}}^2
        \le
        \lVert \Sigma^N(t)\rVert_{\mathrm{op}}
        \lVert (\Sigma^N(0))^{-1}\rVert_{\mathrm{op}}
        \le 2\max\{R,d\}  \lVert \Sigma_0^{-1}\rVert_{\mathrm{op}},
        \qquad t\ge0,
\]
on $\mathcal G_N$.  Hence, using the Lipschitz property of the affine particle map, we obtain
\begin{equation}\label{eq:shape-affine-bound}
        \sup_{t\ge0}
        W_2^2(\widetilde\mu_t^N,\Gamma_t^N)
        \le
        2\max\{R,d\} \lVert \Sigma_0^{-1}\rVert_{\mathrm{op}}\,
        W_2^2\bigl(\widetilde\mu_0^N,
                   N(m^N(0),\Sigma^N(0))\bigr)
        \qquad\text{on }\mathcal G_N .
\end{equation}

Next,
\[
\begin{aligned}
        W_2^2\bigl(\widetilde\mu_0^N,N(m^N(0),\Sigma^N(0))\bigr)
        \le
        2W_2^2\bigl(\widetilde\mu_0^N,\widetilde\mu_0\bigr) 
      +
        2W_2^2\bigl(N(m^N(0),\Sigma^N(0)),N(m_0,\Sigma_0)\bigr).
\end{aligned}
\]
By the standard empirical Wasserstein estimates for Gaussian samples
\cite{bobkov-ledoux-2019,ledoux-2017-gaussian-matching,ledoux-zhu-2021}, for some dimension-dependent constant $C_d$,
\[
        \mathbb E W_2^2\bigl(\widetilde\mu_0^N,\widetilde\mu_0\bigr)
        \le C_d r_d(N).
\]
The second term is controlled on $\mathcal G_N$ by the Gaussian
Wasserstein comparison, as in \eqref{eq:gaussian-w2-second-moment-comparison}, and standard Gaussian moment concentration:
\[
\begin{aligned}
&\mathbb E\left[
{\bf 1}_{\mathcal G_N}
W_2^2\bigl(N(m^N(0),\Sigma^N(0)),N(m_0,\Sigma_0)\bigr)
\right]  \\
&\hspace{2cm}
\le
K\,
\mathbb E\left[
\|m^N(0)-m_0\|^2+\|\Sigma^N(0)-\Sigma_0\|_F^2
\right]
\le
K N^{-1},
\end{aligned}
\]
for some constant $K = K(d,\Sigma_0)>0$.
Therefore
\begin{equation}\label{eq:initial-shape-bound}
       \mathbb E\left[
{\bf 1}_{\mathcal G_N} W_2^2\bigl(\widetilde\mu_0^N,N(m^N(0),\Sigma^N(0))\bigr)\right]
        \le C r_d(N).
\end{equation}
Combining \eqref{eq:shape-affine-bound} and
\eqref{eq:initial-shape-bound} gives
\[
        \mathbb E\left[
        {\bf 1}_{\mathcal G_N}
        \sup_{t\ge0}W_2^2(\widetilde\mu_t^N,\Gamma_t^N)
        \right]
        \le C r_d(N).
\]
Finally, by the same argument used in obtaining \eqref{eq:bad-event-poc} along with the observation \(W_2^2(\widetilde\mu_t^N,\Gamma_t^N) \le 4\operatorname{Tr}S^N(t),\) we obtain $ \mathbb E\left[
        {\bf 1}_{\mathcal G_N^c}
        \sup_{t\ge0} W_2^2(\widetilde\mu_t^N,\Gamma_t^N)\right]
        \le C r_d(N).$ Hence,
\begin{equation}\label{eq:shape-term-bound}
        \mathbb E\left[
        \sup_{t\ge0} W_2^2(\widetilde\mu_t^N,\Gamma_t^N)\right]
        \le C r_d(N).
\end{equation}
All the constants $C$ above depend only on $d,m_0,\Sigma_0$.

Putting \eqref{eq:projection-term-bound} and \eqref{eq:shape-term-bound} into
\eqref{eq:w2-triangle-gaussian-projection}, and using $N^{-1}\le C r_d(N)$,
yields \eqref{eq:liu-w2-recovered}. \eqref{eq:Gausspoc} follows from \eqref{eq:liu-w2-recovered} upon using \eqref{4.28}.
\end{proof}

\begin{remark}[Relation to the Gaussian--SVGD analysis of Liu et al. \cite{liu2023towards}]
The bilinear Gaussian base case considered in this section overlaps with the
Gaussian-target analysis of Liu--Ghosal--Balasubramanian--Pillai
\cite{liu2023towards}.  Their work studies SVGD with bilinear kernels through
the geometry of Gaussian variational inference: when the target is Gaussian and
the mean-field initializer is Gaussian, the mean-field SVGD solution remains on
the Gaussian manifold, and the dynamics reduces to a closed system for the mean
and covariance.  They also analyze the corresponding finite-particle system and
prove a uniform-in-time Wasserstein propagation-of-chaos estimate under
Gaussian initialization.

The purpose of the present section is slightly different.  We isolate the
finite-dimensional mechanism behind this phenomenon at the level of Stein
features.  For the bilinear kernel
\(
        \widetilde K_2(y,y')=(1+y^\top y')I_d,
\)
the Stein features are precisely the first and second Gaussian moment features,
and both the mean-field and empirical feature vectors solve the same closed
moment ODE.  Lemma~\ref{lem:affine-stability} proves a direct uniform stability
estimate for this ODE on bounded nondegenerate moment sets.  Consequently,
Proposition~\ref{prop:affine-gaussian-poc} gives a uniform-in-time
propagation-of-chaos estimate in the feature metric under checkable assumptions
on the latent initial law; in particular, the initial law need not be Gaussian. 

Corollary~\ref{cor:recover-liu-wasserstein-poc} shows how, in the special case of Gaussian initialization, this feature-level estimate upgrades to the full empirical Wasserstein estimate. The proof is more direct than the general Gaussian--SVGD argument in \cite{liu2023towards}: it decomposes \[ \widetilde\mu_t^N \longrightarrow \Gamma_t^N:=N(\widetilde m^N(t),\widetilde\Sigma^N(t)) \longrightarrow \widetilde\mu_t \] into two independent pieces. The second piece, \(W_2(\Gamma_t^N,\widetilde\mu_t)\), is controlled directly by Proposition~\ref{prop:affine-gaussian-poc} and the Gaussian Wasserstein formula, since it only depends on the first two moments. The first piece, \(W_2(\widetilde\mu_t^N,\Gamma_t^N)\), is the empirical ``shape'' fluctuation not seen by the Stein features; under Gaussian initialization, the affine representation of the particle system reduces this term to the standard empirical Wasserstein error for Gaussian samples. This yields the stronger estimate \( \mathbb E\left[ \sup_{t\ge0} W_2^2(\widetilde\mu_t^N,\widetilde\mu_t) \right] \le C r_d(N), \) with the same dimension-dependent rate \(r_d(N)\) as in \cite{liu2023towards}. We also note that both Proposition~\ref{prop:affine-gaussian-poc} and Corollary~\ref{cor:recover-liu-wasserstein-poc} obtain uniform-in-time
propagation-of-chaos in a strong sense where the $\sup_{t \ge 0}$ appears within the expectation, in contrast with \cite{liu2023towards}, where this appears outside the expectation. 

The advantage of the present formulation is that the first part of the argument does not rely on Gaussian initialization: Proposition~\ref{prop:affine-gaussian-poc} is a feature-level, non-Gaussian initialization statement. Moreover, by the conjugacy result of Section~\ref{sec:conj}, the same finite-dimensional stability mechanism transfers without loss to every Gaussian-conjugate target--kernel pair in its intrinsic Stein-feature metric.

We also note here that the restriction to the standard Gaussian target is only for normalization. The same Proposition~\ref{prop:affine-gaussian-poc} and
Corollary~\ref{cor:recover-liu-wasserstein-poc} hold for any nondegenerate
Gaussian target, with the constant and suitably modified bilinear
matrix kernels.
\end{remark}

\subsection{Gaussian-conjugates and the two pulled-back kernels}\label{sec:pullback}
Let $A:\R^d\to\R^d$ be a $C^3$ orientation-preserving diffeomorphism.  Set
\[
        H:=A^{-1},
        \qquad
        M(x):=DA(x)^{-1}=DH(A(x)).
\]
The Gaussian-conjugate target is
\begin{equation}\label{eq:gaussian-conjugate-target}
        \pi_A(dx)=(2\pi)^{-d/2} e^{-V_A(x)}\,dx,
        \qquad
        V_A(x)=-\log\det DA(x)+\frac12\norm{A(x)}^2.
\end{equation}
Equivalently, $A_{\#}\pi_A=\gamma_d$.

\textbf{Flexibility of the Gaussian-conjugate class.}
The terminology \emph{Gaussian-conjugate} refers only to the existence of an invertible latent Gaussian representation: if \(Z\sim\gamma_d\) and \(X=H(Z)=A^{-1}(Z)\), then \(X\sim\pi_A\).
It does not imply that \(\pi_A\) is approximately Gaussian in the original \(x\)-coordinates.  Indeed, the Jacobian factor \(\det DA(x)\) can produce strong skewness, nonlinear dependence, heavy or light tails, and multiple modes.  For example, in one dimension, writing $\Phi$ for the normal cdf, any strictly positive smooth density \(q\) with cdf $F_q$ for which \(A=\Phi^{-1}\circ F_q\) is a \(C^3\) diffeomorphism is Gaussian-conjugate; this includes Student \(t\) laws and many smooth multimodal densities.  For $d \ge 1$, writing \(a_\alpha(u)=u-\alpha\sin u\), \(0<\alpha<1\), the coordinatewise map \(A_\alpha(x)=(a_\alpha(x_1),\ldots,a_\alpha(x_d))\) gives
\[
    \pi_{A_\alpha}(x)\propto
    \prod_{j=1}^d(1-\alpha\cos x_j)
    \exp\!\left[-\frac12\sum_{j=1}^d
    (x_j-\alpha\sin x_j)^2\right],
\]
which is genuinely multimodal for, say, \(\alpha=0.9\), with at least \(2^d\) local modes.  Likewise, the choice \(A(x)=(\sinh x_1,\ldots,\sinh x_d)\) produces a strongly non-Gaussian, light-tailed target.  Thus the class is comparable to a smooth invertible normalizing-flow family and is highly flexible, although a global diffeomorphism necessarily restricts it to smooth, everywhere-positive densities on \(\mathbb R^d\), excluding singular or compactly supported targets.

Furthermore, the Gaussian-conjugate construction contains the Kim--Milman transport~\cite{kim2012generalization} as a distinguished contractive subclass.  Indeed, for a target of the form
\[
    \pi(dx)\propto
    \exp\!\left\{-\frac12\|x\|^2-W(x)\right\}dx,
    \qquad W\ \text{convex},
\]
Kim and Milman construct, via reverse heat flow, a contraction
\(H_{\mathrm{KM}}\) satisfying
\[
    (H_{\mathrm{KM}})_{\#}\gamma_d=\pi,
    \qquad
    \|H_{\mathrm{KM}}(z)-H_{\mathrm{KM}}(z')\|
    \le \|z-z'\|.
\]
Whenever \(H_{\mathrm{KM}}\) is sufficiently regular and invertible, our notation corresponds to
\[
    H=H_{\mathrm{KM}},
    \qquad
    A=H_{\mathrm{KM}}^{-1},
\]
and hence \(\pi=\pi_A\).  Moreover, the contraction property yields
\(
    \|M(x)\|_{\mathrm{op}}
    =
    \|DH_{\mathrm{KM}}(A(x))\|_{\mathrm{op}}
    \le 1.
\)
Thus, the Kim--Milman map provides a canonical Gaussian conjugacy with useful Jacobian control for targets that are more log-concave than the Gaussian.  The general Gaussian-conjugate class considered here is substantially larger, since \(A\) may be any sufficiently regular orientation-preserving diffeomorphism; in particular, Gaussian-conjugate targets need not be log-concave and may be skewed or multimodal.

\subsubsection{The pulled-back constant kernel}

Pulling back \eqref{eq:constant-kernel} gives
\begin{equation}\label{eq:conjugate-constant-kernel}
        K_A^{(1)}(x,x')=M(x)M(x')^\top.
\end{equation}
By Lemma~\ref{lem:feature-conjugacy}, the Stein features are
\begin{equation}\label{eq:conjugate-constant-features}
        F_A^{(1)}(x)=-A(x).
\end{equation}
Thus the natural feature metric is
\begin{equation}\label{eq:conjugate-d1}
        d_{A,1}(\rho,\nu):=\norm{\rho A-\nu A}.
\end{equation}

\begin{corollary}[Uniform POC for the first Gaussian-conjugate kernel]\label{cor:conjugate-constant-poc}
Let $X_i^N(0)$ be i.i.d. with law $\mu_0$, and set $Y_i^N(0):=A(X_i^N(0))$.  If
\[
        \E\norm{A(X(0))}^2<\infty,
\]
then the matrix-SVGD flow for $(\pi_A,K_A^{(1)})$ satisfies
\begin{equation}\label{eq:conjugate-constant-poc}
        \E\left[\sup_{t\ge0}\, d_{A,1}^2(\mu_t^N,\mu_t)\right]
        \le
        \frac{\Tr\Cov(A(X(0)))}{N}.
\end{equation}
The same estimate holds with $\mu_t^N,\mu_t$ replaced by their time averages.
\end{corollary}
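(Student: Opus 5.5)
The plan is to reduce the statement to the Gaussian base case through the conjugacy machinery of Section~\ref{sec:conj}, with $H=A^{-1}$. The base pair is $(\gamma_d,\widetilde K_1)$ with $\widetilde K_1=\Id_d$, so $\widetilde B_1\equiv \Id_d$. The conjugate factor is then $B_H(H(y))=DH(y)=M(H(y))$, which gives exactly $K_A^{(1)}(x,x')=M(x)M(x')^\top$. The potential transform \eqref{eq:potential-transform} with $\widetilde V(y)=\|y\|^2/2$ yields $V_A$ up to an additive constant, because $\log\det DH(A(x))=-\log\det DA(x)$.

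Set $\widetilde\mu_0:=A_\#\mu_0$ and $Y_i^N(0)=A(X_i^N(0))$. These $Y_i^N(0)$ are i.i.d. with law $\widetilde\mu_0$, and $\mu_0=H_\#\widetilde\mu_0$.

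Lemma~\ref{lem:velocity-conjugacy} shows that if $Y_t$ solves the base characteristic equation, then $H(Y_t)$ solves the transformed one. Hence the particle system for $(\pi_A,K_A^{(1)})$ is $X_i^N(t)=H(Y_i^N(t))$, and the mean-field solution is $\mu_t=H_\#\widetilde\mu_t$. Uniqueness of the transformed flows is what pins these down as the solutions; I would get it from the diffeomorphism, since any transformed solution pulls back under $A$ to a base solution, and the base flow is explicit.

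By Lemma~\ref{lem:feature-conjugacy}, $F_A^{(1)}=\widetilde F_1\circ A=-A$. Corollary~\ref{cor:conjugacy-poc-transfer} then gives $d_{A,1}(\mu_t^N,\mu_t)=\|\widetilde\mu_t^N y-\widetilde\mu_t y\|$ for every $t$. Proposition~\ref{prop:constant-gaussian-poc}, applied under the hypothesis $\E\|Y(0)\|^2=\E\|A(X(0))\|^2<\infty$, yields the bound $\Tr\Cov(Y(0))/N=\Tr\Cov(A(X(0)))/N$, with the supremum inside the expectation.

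For the time-averaged statement, linearity gives $\overline\mu_T A=\frac1T\int_0^T\mu_tA\,dt$, and likewise for $\overline\mu^N_T$. So $d_{A,1}(\overline\mu_T^N,\overline\mu_T)\le \frac1T\int_0^T d_{A,1}(\mu_t^N,\mu_t)\,dt\le\sup_t d_{A,1}(\mu_t^N,\mu_t)$, and the same bound follows. Alternatively, one can use the explicit identity $\mu_t^NA-\mu_tA=e^{-t}(\mu_0^NA-\mu_0A)$ directly.

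The main obstacle is justifying that the transformed dynamics are well posed and coincide with the pushforwards. In particular, we need that $\mu_tA$ is finite and that the weak formulation is respected, given only $\E\|A(X(0))\|^2<\infty$ and no growth control on $M$. I would handle this by defining everything through the base flow, where the velocity is the constant $-\widetilde m(t)$ and so the characteristics are explicit translations $Y_t=Y_0-(1-e^{-t})\widetilde m(0)$. All the regularity needed is then inherited from $H$ being a $C^2$ diffeomorphism, and no moment assumption on $X$ itself is required.
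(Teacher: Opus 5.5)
Your proposal is correct and matches the paper's proof, which applies Proposition~\ref{prop:constant-gaussian-poc} in the latent coordinate $Y=A(X)$ and invokes Lemmas~\ref{lem:feature-conjugacy}--\ref{lem:velocity-conjugacy} with $H=A^{-1}$, exactly as you do. Your additional checks fill in details the paper leaves implicit: that the pulled-back pair is exactly $(\pi_A,K_A^{(1)})$, that uniqueness follows by pulling transformed solutions back under $A$ to the explicit base translation flow, and that the time-averaged bound follows from $d_{A,1}(\overline\mu_T^N,\overline\mu_T)\le\sup_t d_{A,1}(\mu_t^N,\mu_t)$.
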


\begin{proof}
Apply Proposition~\ref{prop:constant-gaussian-poc} in the latent coordinate $Y=A(X)$ and use Lemmas~\ref{lem:feature-conjugacy}--\ref{lem:velocity-conjugacy}.
\end{proof}

\subsubsection{The pulled-back bilinear kernel}

Pulling back \eqref{eq:affine-kernel} gives
\begin{equation}\label{eq:conjugate-affine-kernel}
        K_A^{(2)}(x,x')=\bigl(1+A(x)^\top A(x')\bigr)M(x)M(x')^\top .
\end{equation}
The Stein features are
\begin{equation}\label{eq:conjugate-affine-features}
        -A_i(x),
        \qquad
        \delta_{ij}-A_i(x)A_j(x),
        \qquad 1\le i,j\le d.
\end{equation}
Thus this kernel controls the first and second moments of the latent coordinate $A(X)$.  Define
\begin{equation}\label{eq:conjugate-d2}
        d_{A,2}(\rho,\nu)
        :=
        \left(
        \norm{\rho A-\nu A}^2
        +
        \norm{\rho(AA^\top)-\nu(AA^\top)}_F^2
        \right)^{1/2}.
\end{equation}

\begin{corollary}[Uniform POC for the second Gaussian-conjugate kernel]
\label{cor:conjugate-affine-poc}
Let $X_i^N(0)$ be i.i.d. with law $\mu_0$, and set
$
        Y_i^N(0):=A(X_i^N(0)).
$
Assume that
\begin{equation}\label{eq:conjugate-affine-initial-conditions}
        \E \exp\{\alpha\norm{A(X(0))}^2\}<\infty,
        \qquad
        \Cov(A(X(0)))\ge c_0\Id_d
\end{equation}
for some $\alpha>0,c_0>0$.  Then the matrix-SVGD flow for $(\pi_A,K_A^{(2)})$
satisfies
\begin{equation}\label{eq:conjugate-affine-poc}
        \E\left[\sup_{t\ge0}\, d_{A,2}^2(\mu_t^N,\mu_t)\right]
        \le
        C N^{-1},
\end{equation}
where
$
        C
        =
        C\Bigl(d,c_0,\alpha,
        \E e^{\alpha\norm{A(X(0))}^2}\Bigr)<\infty .
$
\end{corollary}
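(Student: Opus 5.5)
The plan is to reduce Corollary~\ref{cor:conjugate-affine-poc} to Proposition~\ref{prop:affine-gaussian-poc} through the conjugacy machinery of Section~\ref{sec:conj}, applied with $H:=A^{-1}$ and base pair $(\gamma_d,\widetilde K_2)$.

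\textbf{Step 1: identify the pulled-back pair.} With $J(y)=DH(y)=M(A^{-1}(y))$, the definition \eqref{eq:potential-transform} gives
\[
V_H(x)=\tfrac12\|A(x)\|^2+\log\det DH(A(x))=\tfrac12\|A(x)\|^2-\log\det DA(x)=V_A(x),
\]
so $\pi_H=\pi_A$. Likewise \eqref{eq:factor-transform} applied to $\widetilde B_2$ gives $B_H(x)=M(x)\widetilde B_2(A(x))$. Hence $K_H=K_A^{(2)}$, since $\widetilde B_2(y)\widetilde B_2(y')^\top=(1+y^\top y')I_d$.

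\textbf{Step 2: identify the features and the metric.} Lemma~\ref{lem:feature-conjugacy} shows that the features of $K_A^{(2)}$ are $\widetilde F_2\circ A$, i.e.\ $-A_i$ and $\delta_{ij}-A_iA_j$, as in \eqref{eq:conjugate-affine-features}. Consequently
\[
d_{F_H}(\rho,\nu)=\bigl(\|\rho A-\nu A\|^2+\|\rho(AA^\top)-\nu(AA^\top)\|_F^2\bigr)^{1/2}=d_{A,2}(\rho,\nu).
\]

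\textbf{Step 3: set up the latent system.} Let $Y_i(0):=A(X_i^N(0))$. These are i.i.d.\ with law $\widetilde\mu_0:=A_\#\mu_0$, and $\mu_0=H_\#\widetilde\mu_0$. By Lemma~\ref{lem:velocity-conjugacy}, the particles $H(Y_i(t))$ solve the $(\pi_A,K_A^{(2)})$ particle system. By uniqueness of ODE solutions they coincide with $X_i^N(t)$, so $\mu_t^N=H_\#\widetilde\mu_t^N$.

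For the mean-field flow, one checks that $H_\#\widetilde\mu_t$ solves the transformed continuity equation. For a test function $\varphi$, $\varphi\circ H$ is a valid test function for the base equation, and the chain rule together with \eqref{eq:velocity-conjugacy} gives the required identity. By the well-posedness assumed in the global well-posedness remark (transferred through the diffeomorphism), $\mu_t=H_\#\widetilde\mu_t$.

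\textbf{Step 4: conclude.} Corollary~\ref{cor:conjugacy-poc-transfer} now gives $d_{A,2}(\mu_t^N,\mu_t)=D_2(z^N(t),z(t))$ for every $t$, where $z^N(t),z(t)$ are the latent moment pairs. The hypotheses \eqref{eq:conjugate-affine-initial-conditions} are exactly those of Proposition~\ref{prop:affine-gaussian-poc} for $Y(0)=A(X(0))$. That proposition therefore yields \eqref{eq:conjugate-affine-poc}, with $C$ depending on $d,c_0,\alpha,\E e^{\alpha\|A(X(0))\|^2}$.

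\textbf{Main obstacle.} The only nontrivial point is the identification of the mean-field solution in Step~3, i.e.\ uniqueness of the transformed mean-field flow. I would handle it through the finite-rank closure. Any solution of the transformed equation has a feature vector $\rho_tF_H$ that determines the velocity $B_H(x)\rho_tF_H$, which is locally Lipschitz in $x$. The characteristic argument then reduces uniqueness to that of the latent moment ODE \eqref{eq:affine-moment-ode}, which is globally well-posed on the set where $\Sigma>0$, by the a priori bounds in Lemma~\ref{lem:affine-stability}.
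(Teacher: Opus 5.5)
Your proposal is correct and follows the same route as the paper, whose proof simply applies Proposition~\ref{prop:affine-gaussian-poc} to the latent variables $Y=A(X)$ and transfers the bound through Lemmas~\ref{lem:feature-conjugacy}--\ref{lem:velocity-conjugacy}, i.e.\ Corollary~\ref{cor:conjugacy-poc-transfer} with $H=A^{-1}$. You go further in checking details the paper leaves implicit: that $(\pi_H,K_H)=(\pi_A,K_A^{(2)})$, that $d_{F_H}=d_{A,2}$, and that the transformed mean-field solution equals $H_\#\widetilde\mu_t$, a uniqueness point the paper takes for granted via its well-posedness remark.
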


\begin{proof}
Apply Proposition~\ref{prop:affine-gaussian-poc} in the latent coordinate $Y=A(X)$ and use Lemmas~\ref{lem:feature-conjugacy}--\ref{lem:velocity-conjugacy}.
\end{proof}

\begin{remark}[A broader finite-dimensional closure program]

The Gaussian-conjugate examples above should be viewed as a proof of principle for a more general mechanism, rather than as the only possible setting in which polynomial uniform-in-time propagation-of-chaos rates can be obtained. Suppose that a target--kernel pair admits a finite collection of Stein features
\[
F=(f_1,\ldots,f_m)
\]
such that the SVGD dynamics closes, or approximately closes, at the level of the feature vector \(\rho F\). In the ideal case, this means that
\[
\frac{d}{dt}\rho_t F = G(\rho_tF)
\]
for a finite-dimensional vector field \(G\), and that the flow $(\Phi_t)_{t \ge 0}$ of the ODE $\dot z = G(z)$ satisfies a polynomial stability estimate
\[
|\Phi_t(z)-\Phi_t(z')|
\le C(1+t)^p|z-z'| ,
\]
for $z,z'$ in an `admissible' region where the trajectories live.
More generally, the same strategy should apply when the exact feature dynamics are not closed, but the discrepancy between the relevant driving vector fields is controlled by a polynomial function of time.

Under such a closure/stability mechanism, the empirical \(N^{-1/2}\) fluctuation of the initial feature vector propagates only polynomially on finite time intervals, giving estimates of the form
\begin{equation}\label{fintime}
\sup_{0\le t\le R}
\mathbb{E} \,d_F(\mu_t^N,\mu_t)
\le C N^{-1/2}(1+R)^p
\end{equation}
up to the corresponding vector-field approximation error. Combining this finite-time estimate with the long-time target estimates for the time-averaged measures used in the preceding sections of the form
\[
\mathbb{E} \,d_F(\bar\mu_T^N,\pi)
\le C(T^{-b}+N^{-q}),
\qquad
d_F(\bar\mu_T,\pi)\le C T^{-b},
\]
for suitable $b,q>0$, and optimizing the cutoff \(R=R_N\), one obtains a polynomial uniform-in-averaging-horizon bound of the form
\[
\sup_{T>0}
\mathbb{E} \,d_F(\bar\mu_T^N,\bar\mu_T)
\le
C N^{-\min\{q, b/(2p+2b)\}} .
\]
In the uniformly stable case \(p=0\), the long-time analysis is not needed and one simply applies \eqref{fintime} to obtain the sharp feature-level \(N^{-1/2}\) rate.

Thus the essential structural inputs for polynomial uniform-in-time propagation-of-chaos are finite-dimensional closure, polynomial stability or polynomially controlled vector-field discrepancy, and long-time convergence of both the finite-particle and mean-field time averages to the common target. In the present work we carry this program out explicitly for the Gaussian-conjugate class, where the Stein features reduce to latent first and second moments and the closed moment dynamics can be analyzed directly. Finding genuinely new non-Gaussian target--kernel pairs satisfying these closure and stability conditions remains an interesting open direction.
\end{remark}

\begin{remark}[Relation to weak propagation-of-chaos methods in shallow neural networks]\label{GBrem}

In Section~\ref{sec:KSD} and Section~\ref{sec:conjgauss}, we obtain uniform-in-time propagation-of-chaos estimates in a `weak sense' by controlling the discrepancy between the empirical and mean-field flows for a class of observables without controlling the full empirical measure. This is similar in spirit to the recent work of Glasgow and Bruna \cite{glasgow2026uniform} in a shallow neural-network setting, where the authors establish a uniform-in-time weak propagation-of-chaos result by focusing on a specific observable of the particle system, namely the network output. Their approach exploits the decay of the mean-field loss to prevent the fluctuation error from accumulating over long time horizons. While our work also seeks long-time control of particle approximations, the underlying mechanism is quite different.

For SVGD, the analogous weak observable is the Stein witness
\begin{equation*}
S_\pi(\rho)
=
\int \tau(x)\rho(dx),
\qquad
\tau(x)
=
-k(\cdot,x)\nabla V(x)
+
\nabla_2 k(\cdot,x).
\end{equation*}
Indeed,
$
\KSD_{\pi}(\rho,\eta)
=
\|S_\pi(\rho)-S_\pi(\eta)\|_{\mathcal H^d}.
$
A formal analogue of the Glasgow--Bruna argument would differentiate
\begin{equation*}
E_t
=
\|S_\pi(\mu_t^N)-S_\pi(\mu_t)\|_{\mathcal H^d}
\end{equation*}
along the finite-particle and mean-field SVGD flows. If the resulting differentiated Stein observables can again be controlled by the same KSD distance, one expects an estimate of the form
\begin{equation*}
\frac{d}{dt}E_t
\le
C\,\KSD(\mu_t \|\pi)E_t
+
C\, E_t^2
\end{equation*}
for some finite constant $C$.
Thus the decay of the mean-field KSD would play the same role as the decay of the mean-field loss in \cite{glasgow2026uniform}: it acts as a stabilizing coefficient in the fluctuation estimate.

Carrying out this Glasgow--Bruna-type program for SVGD seems to require two additional ingredients that are not presently available in general. First, polynomial fluctuation estimates require the integrability condition
\begin{equation*}
\int_0^\infty \KSD(\mu_t \|\pi)\,dt < \infty ,
\end{equation*}
whereas the standard SVGD entropy dissipation only gives integrability of \(\KSD(\mu_t \|\pi)^2\). Second, differentiating the Stein witness produces observables such as
$
\nabla(T_\pi a)(x)\cdot b(x),
$
which would need to remain controlled by the same KSD dual class. This is a strong Stein-closure property and is not automatic for general kernels and targets. Thus the Glasgow--Bruna philosophy suggests a possible weak-observable route for SVGD, but our concrete polynomial-rate results rely on the explicitly checkable Gaussian-conjugacy closure.

\end{remark}

\begin{remark}[On possible infinite-rank extensions]\label{rem:infrank}
The finite-rank assumption is mainly used in this section to obtain a finite list of Stein features and, in the Gaussian bilinear case, a closed stable moment system. By contrast, the change-of-variables identities \eqref{eq:potential-transform} and \eqref{eq:kernel-transform} behind the conjugacy principle are not specific to finite rank.
Thus an infinite-rank extension of our results should be natural at the level of such covariance identities, for instance through a Hilbert-space factorization of the matrix-valued kernel. However, to obtain polynomial rates for uniform-in-time propagation-of-chaos at this level of generality, one would need a stable closed evolution for the resulting infinite-dimensional feature map, or a finite-rank approximation scheme with truncation errors controlled uniformly in time. We do not pursue this direction here.
\end{remark}

\subsection*{Acknowledgements}

 SB is partially supported by the NSF-CAREER award DMS-2141621 and the NSF RTG grant DMS-2134107.

KB is supported in part by National Science Foundation (NSF) grant DMS-2413426.

AK was funded by the European
Union (ERC, Optinfinite, 101201229). The views and opin-
ions expressed are, however, of the author(s) only and do
not necessarily reflect those of the European Union or the
European Research Council Executive Agency. Neither
the European Union nor the granting authority can be held
responsible for them.

The authors gratefully acknowledge the hospitality and support of the Banff International Research Station (BIRS) during the 2026 workshop on \emph{Stein's Method Meets Statistical Learning}, where this work was initiated.

\bibliographystyle{plain}
\bibliography{ref}

\end{document}